\newtheorem{theorem}{Theorem}
\newtheorem{lemma}[theorem]{Lemma}
\newtheorem{proposition}[theorem]{Proposition}
\newtheorem{corollary}[theorem]{Corollary}
\theoremstyle{definition}
\newtheorem{remark}[theorem]{Remark}
\newtheorem{question}[theorem]{Question}
\newtheorem{definition}[theorem]{Definition}
\newtheoremstyle{TheoremNum}
  {\topsep}{\topsep}              
  {\itshape}                      
  {}                              
  {\bfseries}                     
  {.}                             
  { }                             
  {\thmname{#1}\thmnote{ \bfseries #3}}
\theoremstyle{TheoremNum}
\newtheorem{reptheorem}{Theorem}
\newtheoremstyle{TheoremNum}
  {\topsep}{\topsep}              
  {\itshape}                      
  {}                              
  {\bfseries}                     
  {.}                             
  { }                             
  {\thmname{#1}\thmnote{ \bfseries #3}}
\theoremstyle{TheoremNum}
\newtheorem{replemma}{Lemma}
\begin{document}

\newcommand{\cc}{\mathfrak{c}}
\newcommand{\N}{\mathbb{N}}
\newcommand{\M}{\mathbb{M}}
\newcommand{\C}{\mathbb{C}}
\newcommand{\Q}{\mathbb{Q}}
\newcommand{\R}{\mathbb{R}}
\newcommand{\T}{\mathbb{T}}
\newcommand{\st}{*}
\newcommand{\PP}{\mathbb{P}}
\newcommand{\forces}{\Vdash}
\newcommand{\dom}{\text{dom}}
\newcommand{\osc}{\text{osc}}
\newcommand{\F}{\mathcal{F}}
\newcommand{\A}{\mathcal{A}}
\newcommand{\B}{\mathcal{B}}
\newcommand{\BB}{\mathscr{B}}
\newcommand{\MM}{\mathscr{M}}
\newcommand{\SSS}{\mathscr{S}}
\newcommand{\I}{\mathcal{I}}
\newcommand{\CC}{\mathcal{C}}
\newcommand{\X}{\mathcal{X}}
\newcommand{\K}{\mathbb{K}}
\newcommand{\seq}{2^{<{\mathbb N}}}
\newcommand{\can}{2^{{\mathbb N}}}
\newcommand{\sub}{{[\mathbb N]^\omega}}
\newcommand{\fin}{{[\mathbb N]^{<\omega}}}
\newcommand{\Id}{\operatorname{Id}}

\newcommand{\marg}[1]{\marginpar{\tiny\color{blue} #1}} 

\author[P.~Koszmider]{Piotr Koszmider}
\address[P.~Koszmider]{Institute of Mathematics, Polish Academy of Sciences,
ul. \'Snia\-deckich~8,  00-656~Warszawa, Poland}
\email{\texttt{piotr.koszmider@impan.pl}}
\thanks{The main part  of the research presented in this paper was carried out 
while the first named author visited Lancaster University in the early months of 2019.
He wishes to thank the second named author for hosting him and 
the Faculty of Science and Technology of Lancaster University for supporting the visit through the
Distinguished Visitor Programme Fund}

\author[N.~J.~Laustsen]{Niels Jakob Laustsen}
\address[N.~J.~Laustsen]{Department of Mathematics and Statistics, Fylde College,
 Lancaster University, Lancaster, LA1~4YF, United Kingdom}
\email{\texttt{n.laustsen@lancaster.ac.uk}}

\subjclass[2010]{%
03E05,
46E15,
47L10,
54D80
(primary); 
46B26, 
46H40,
47B38,
47L20,
54D45}
%
\keywords{Banach space of continuous functions, compact Hausdorff space,  $\Psi$\nobreakdash-space, almost disjoint family,
partitioner, perfect set property, bounded linear
  operator, closed operator ideal.}

\title[Banach spaces induced by almost disjoint families]{A Banach space
 induced by an almost disjoint family, admitting only few operators and decompositions}

\begin{abstract} We consider  the closed subspace of~$\ell_\infty$ generated by~$c_0$ and the characteristic functions of elements
  of an uncountable,
  almost disjoint family~$\A$ of infinite subsets of~$\N$. This Banach space has the form $C_0(K_\A)$ for
a locally compact Hausdorff space~$K_\A$ that is known under many
names, including $\Psi$\nobreakdash-space and Isbell--Mr\'ow\-ka space.

We construct an uncountable, almost disjoint family $\A$ such that the
algebra of all bounded linear operators on $C_0(K_\A)$ is as small as
possible in the precise sense that every bounded linear operator
on~$C_0(K_\A)$ is the sum of a scalar multiple of the identity and an
operator that factors through~$c_0$ (which in this case is equivalent
to having separable range). This implies that~$C_0(K_\A)$ has the
fewest possible decompositions: whenever~$C_0(K_\A)$ is written as the
direct sum of two infinite-dimensional Banach spaces~$\mathcal{X}$
and~$\mathcal{Y}$, either $\mathcal{X}$ is isomorphic to~$C_0(K_\A)$
and $\mathcal{Y}$ to~$c_0$, or vice versa. These results improve
previous work of the first named author in which an extra
set-theoretic hypothesis was required. We also discuss the
consequences of these results for the algebra of all bounded linear
operators on our Banach space $C_0(K_\A)$ concerning the lattice of
closed ideals, characters and automatic continuity of
ho\-mo\-mor\-phisms.

To exploit the perfect set property for Borel sets as in the classical
construction of an almost disjoint family by Mr\'owka, we need to deal
with $\N \times \N$ matrices rather than with the usual partitioners
of an almost disjoint family.  This noncommutative setting requires
new ideas inspired by the theory of compact and weakly compact
operators and the use of an extraction principle due to van Engelen,
Kunen and Miller concerning Borel subsets of the square.\smallskip

Accepted for publication in \emph{Advances in Mathematics.}
\end{abstract}

\maketitle

\section{Introduction}
\noindent
The symbols $\sub$  and $\fin$ denote the families  of all infinite and finite 
subsets, re\-spec\-tive\-ly, of the set $\N=\{0,1,2\,\ldots\}$ of nonnegative integers. A family $\A\subseteq \sub$  is called \emph{almost
  disjoint} if  $A\cap A'\in \fin$ whenever $A,A'\in\A$ are distinct.

We shall investigate the impact that the combinatorial structure of an almost disjoint family $\A$ has 
on the Banach space $\mathcal{X}_\A$
 associated
with it. The formal definition of this space is as follows.

\begin{definition}\label{definition-psi}
 Let $\A\subseteq \sub$ be an  almost disjoint family.
Then~$\X_\A$ is the closed subspace of~$\ell_\infty$ spanned by 
$\{1_A: A\in \A\cup\fin\}.$
\end{definition}
Here, and elsewhere, $1_A$ stands for the characteristic function of a
set $A$.  Banach spaces of the form~$\mathcal{X}_\A$ were first
considered by Johnson and Lindenstrauss in Example~2
of~\cite{johnson-lindenstrauss}. 
Being a
self-adjoint subalgebra of~$\ell_\infty$, $\mathcal{X}_\A$~is isometrically
isomorphic to~$C_0(K_\A)$, where~$K_\A$ is the Gelfand space
of~$\mathcal{X}_\A$. Moreover, $C_0(K_\A)$  contains a complemented copy
of~$c_0$, so $C_0(K_\A)$ is isomorphic to
the Banach space~$C(\alpha K_\A)$ of all continuous scalar-valued
functions on the one-point compactification~$\alpha K_\A$ of~$K_\A$.

The study of~$K_\A$ as an interesting example of a scattered, locally
compact Hausdorff space, which is nonmetrizable whenever~$\A$ is
uncountable, can be traced back to Alexandroff and
Urysohn~\cite{alexandroff}. Spaces of the form~$K_\A$ are known under
many names, including \emph{$\Psi$-spaces, Isbell--Mr\'owka spaces,
  AU-compacta} and \emph{Mr\'owka spaces}. We refer to~\cite{hrusak1}
and~\cite{hrusak2} for recent surveys on these spaces and their
numerous applications found during many decades of investigations.

It is often assumed in the literature that the almost disjoint
family~$\A$ inducing the space~$K_\A$ is maximal with respect to
inclusion, which corresponds to $K_\A$ being pseudo\-compact. We do
not make this assumption here, and indeed the families that we obtain are not maximal.

In \cite{mrowka}, the first named author used the continuum hypothesis, or its weakening 
$\mathfrak{p}=2^\omega$, to construct an uncountable,  almost disjoint family 
$\A\subseteq\sub$ such that every bounded linear operator on~$C_0(K_\A)$ has
the form $\lambda \operatorname{Id}+S$, where $\lambda$ is a scalar,  $\operatorname{Id}$~de\-notes the identity operator on~$C_0(K_\A)$, and~$S$ is an operator that factors through~$c_0$ in the sense that $S = VU$ for some  bounded linear operators $U\colon C_0(K_\A)\to c_0$ and $V\colon c_0\to C_0(K_\A)$.
Here we provide another construction of such a family~$\A$ that does not
require any additional set-theoretic axioms.

\begin{theorem}\label{main-theorem} There is an uncountable, almost disjoint family
$\A\subseteq\sub$ such that every bounded linear operator $T:C_0(K_\A)\rightarrow C_0(K_\A)$ has the form
  \[ T=\lambda \operatorname{Id}+S \]
for some scalar~$\lambda$ and some operator $S:C_0(K_\A)\rightarrow C_0(K_\A)$ that factors through~$c_0$.  
\end{theorem}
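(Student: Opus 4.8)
The plan is to reduce the theorem to a purely combinatorial property of $\A$ and then to construct such an $\A$ by transfinite recursion, using the perfect set property for Borel sets at each step. For the reduction, recall the identifications $\X_\A^* = \ell_1(K_\A)$ (the purely atomic measures on $K_\A = \N \cup \{x_A : A \in \A\}$, with point masses $\delta_p$) and $\X_\A / c_0 \cong c_0(\A)$, with quotient map $q$. Since $c_0$ is complemented in $\X_\A$ and, for every countable $\A_0 \subseteq \A$, the subspace $\X_{\A_0} := \overline{\operatorname{span}}(c_0 \cup \{1_A : A \in \A_0\})$ is isomorphic to $c_0$ and coincides with $q^{-1}[c_0(\A_0)]$, an operator $S$ on $\X_\A$ factors through $c_0$ if and only if it has separable range, which in turn holds if and only if $\{A \in \A : S^*\delta_{x_A} \neq 0\}$ is countable. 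Writing $T = \lambda\operatorname{Id} + S$, it therefore suffices to build an uncountable, almost disjoint family $\A \subseteq \sub$ with property $(\star)$: for every bounded operator $T$ on $\X_\A$ there is a scalar $\lambda$ with $T^*\delta_{x_A} = \lambda\delta_{x_A}$ for all but countably many $A \in \A$.

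To analyse $(\star)$ I would encode a bounded operator $T$ on $\X_\A$ by its matrix $t_{m,n} := (Te_n)(m)$ ($m, n \in \N$, where $e_n := 1_{\{n\}}$). Because $T^*\delta_m \in \ell_1(K_\A)$, the $m$-th row $(t_{m,n})_{n}$ lies in $\|T\| \cdot B_{\ell_1}$, and it is exactly this $\ell_1$-boundedness of the rows that lets techniques from the theory of compact and weakly compact operators enter: the matrix is, in an asymptotic sense, approximable by finite-rank matrices, and this is ultimately what singles out a single scalar $\lambda$, a common ``diagonal limit'' of the matrix. Since $\delta_{x_A}$ is the weak$^*$ limit of $\delta_n$ as $n \to \infty$ through $A$ and $T^*$ is weak$^*$-continuous, the measure $T^*\delta_{x_A}$ — and hence the validity of $(\star)$ — is recovered from the matrix $(t_{m,n})$ together with the numbers $(T1_C)(x_A)$, $C \in \A$; a separate argument is required to show that in a suitably chosen $\A$ these ``correction'' data create no further bad operators. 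The conclusion of this analysis is that the potential failures of $(\star)$ are coded by Borel objects — $\N \times \N$ matrices with rows in a ball of $\ell_1$, together with countable auxiliary data — ranging over a Polish space.

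I would then construct $\A = \{A_\alpha : \alpha < \cc\}$ by transfinite recursion, with a bookkeeping enumerating all of the Borel codes described above. At stage $\alpha$, given an almost disjoint family $\{A_\beta : \beta < \alpha\}$ kept extendible, one chooses $A_\alpha \in \sub$ almost disjoint from all earlier members and defeating the code assigned to that stage. Here the perfect set property is the decisive tool: the set of $X \in \sub$ through which a given matrix could act non-trivially is Borel, hence either countable — which is harmless, since removing countably many sets does not destroy extendibility of an almost disjoint family of size $< \cc$ — or it contains a perfect set, from which one can still select $A_\alpha$ (a perfect set minus countably much is still large), using the perfect set to ``diagonalise'' the matrix so as to pin down, or conform to, the scalar $\lambda$. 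Because the obstructions are genuinely two-dimensional — they concern subsets of $\N \times \N$, not partitioners (subsets of $\N$) of $\A$ — this selection relies on the extraction principle of van Engelen, Kunen and Miller, which, from a Borel subset of a square with a suitable largeness property, produces a perfect ``rectangle'' along which the matrix canonises.

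Finally, to verify $(\star)$ for the resulting space $\X_\A$: given a bounded operator $T$, for club-many $\delta < \cc$ the matrix and auxiliary data of $T$, restricted to a countable window in $\N \times \N$ and to $\{A_\beta : \beta < \delta\}$, agree with one of the enumerated Borel codes; combined with the compact-operator analysis of the matrix (its rows and columns converge as forced by membership in $C_0(K_\A)$, which identifies the scalar $\lambda$), this blocks the possibility $T^*\delta_{x_{A_\alpha}} \neq \lambda\delta_{x_{A_\alpha}}$ at all but countably many stages. I expect the heart of the difficulty to be exactly this passage to the noncommutative setting: in the approach of~\cite{mrowka}, under $\mathfrak p = 2^\omega$, a general operator is first replaced by a partitioner, a one-dimensional Borel object for which the classical perfect set property on $2^\N$ already suffices, whereas in $\mathrm{ZFC}$ no such reduction is available, so the full $\N \times \N$ matrix must be controlled throughout the recursion — making the compact and weakly compact operator reductions uniform enough that the perfect set property together with the van Engelen--Kunen--Miller square extraction apply simultaneously against all two-dimensional obstructions, while preserving almost disjointness. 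A subsidiary obstacle, flagged above, is to show that the correction terms $(T1_C)(x_A)$ do not spoil the reduction to the matrix.
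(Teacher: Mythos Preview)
Your proposal correctly identifies the main ingredients --- the matrix representation of operators with $\ell_1$-bounded rows, the Borel perfect-set dichotomy, the van Engelen--Kunen--Miller square extraction, and the relevance of compact-operator theory --- and your reduction to property~$(\star)$ is essentially the paper's Reduction Lemma (your ``correction terms'' $(T1_C)(x_A)$ are handled there via a countable exceptional set $E\subseteq K'$). But there is a genuine structural gap in the construction. The paper does \emph{not} build the family directly. It fixes once and for all an uncountable \emph{Borel} almost disjoint family~$\A$, applies a dichotomy to each matrix~$M$ relative to this fixed~$\A$ (either $\A$ minus a countable set ``accepts'' $M-\lambda I$ for some~$\lambda$, or continuum-many pairwise disjoint pairs from~$\A$ each contain an infinite subset rejecting~$M$), and then builds the final family~$\B$ so that each $B_\xi$ lies inside a union $A_\xi\cup A_\xi'$ of a pair from~$\A$. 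Crucially, ``$A$ accepts $M$'' means the localized matrix~$M^A$ (zeroed outside $A\times A$) is compact in the sense $\lVert M^A_j\rVert\to 0$; this notion --- not the naive admission $\lim_{k\in A'}(M1_A)(k)=0$ --- is the paper's key technical device, because it is \emph{hereditary}: if~$\A$ accepts~$M$, then every subset of a finite union of members of~$\A$ accepts~$M$.

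That hereditary property is exactly what your direct recursion cannot supply. At stage~$\alpha$ you choose~$A_\alpha$ to reject one bad matrix~$M_\alpha$, but you must simultaneously ensure that~$A_\alpha$ behaves well with respect to \emph{every} good matrix~$M$, of which there are continuum many, each with its own countable exceptional set whose union may exhaust all candidates. The paper sidesteps this entirely: since $B_\xi\subseteq A_\xi\cup A_\xi'$ with $A_\xi,A_\xi'\in\A$, goodness of~$M$ for~$\A$ passes down to~$\B$ for free by monotonicity of acceptance, and then acceptance plus the absence of ``undermining'' yields admission. Your closing-off argument does not close this gap; anticipating~$T$ at club-many stages lets you \emph{reject} bad matrices, not \emph{admit} good ones. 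The paper is explicit that naive admission ``fails monotonicity badly'' and that it does not know how to carry out the recursion with it; the invention of acceptance via compact matrices, together with the two-layer $\A$-then-$\B$ architecture, is precisely the missing idea that makes the \textsf{ZFC} construction go through.
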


Note in particular that the operator~$S$ has separable range. In fact, for operators on a space of the form~$C_0(K_\A)$, it is well known that having separable range and factoring through~$c_0$ are equivalent properties; see Lemma~\ref{jl-c0} for details. 

Theorem~\ref{main-theorem} answers a question raised in~\cite{mrowka}, and together
with  results of 
Argyros and Raikoftsalis~\cite{argyros},
Mar\-ciszew\-ski and Pol~\cite{mpol}, and
Cabello S\'anchez, Castillo, Marciszewski, 
Plebanek and Salguero-Alarc\'on~\cite{sailing}, it  completes the list of solutions to the problems left open in~\cite{mrowka}.

Theorem~\ref{main-theorem} has some remarkable consequences, notably: 
\begin{enumerate}[label={\normalfont{(\roman*)}}]
\item\label{TrivDecomp} Whenever the Banach space~$C_0(K_\A)$ is
  decomposed into a direct sum $C_0(K_\A)=\mathcal X\oplus\mathcal Y$ of two closed, infinite-dimensional
  subspaces~$\mathcal{X}$ and~$\mathcal{Y}$, either~$\mathcal{X}$ is
  isomorphic to~$C_0(K_\A)$ and~$\mathcal{Y}$ is isomorphic to~$c_0$,
  or vice versa  (see
  Lemma~4 of~\cite{mrowka}).
\item\label{KKclosedideals}   The Banach algebra~$\BB(C_0(K_\A))$ of all bounded linear
operators on the Ba\-nach space~$C_0(K_\A)$
contains precisely four closed two-sided ideals, name\-ly
\[ \{0\}\subsetneq \mathscr{K}(C_0(K_\A))\subsetneq \mathscr{X}(C_0(K_\A))\subsetneq\BB(C_0(K_\A)), \]
where $\mathscr{K}(C_0(K_\A))$ and $\mathscr{X}(C_0(K_\A))$ are the ideals of compact operators and operators with separable range, respectively. This result is due to Kania and Kochanek (see Theorem~5.5 of~\cite{tomek-tomek}) and  was also observed by Brooker (unpublished).
\item\label{ker-character} The Banach algebra~$\BB(C_0(K_\A))$
admits a character, that is, a nonzero multiplicative linear functional, whose kernel is
$\mathscr{X}(C_0(K_\A))$; we shall discuss this point in much more detail in Subsection~\ref{few}.
\item\label{cont-maps} If $\phi: \alpha K_\A\rightarrow \alpha K_\A$ is a continuous map,
then either $\phi$ has countable range or 
all but countably many points of $\alpha K_\A$ are fixed by~$\phi$ (see Proposition~\ref{maps}). 
\end{enumerate}

Roughly speaking, these results say that~$C_0(K_\A)$ is ``minimal'' in
terms of its de\-compo\-si\-tions, closed operator ideals,  kernels of characters on
$\BB(C_0(K_\A))$ and continuous maps on $\alpha K_\A$. 
 Indeed, $C_0(K)$ contains a complemented copy of~$c_0$
when\-ever~$K$ is a scattered, locally compact Hausdorff space, so decompositions of~$C_0(K_\A)$ of the
form described in~\ref{TrivDecomp} are unavoidable, as is the 
ideal~$\mathscr{X}(C_0(K_\A))$ of operators factoring through~$c_0$ lying strictly
between~$\mathscr{K}(C_0(K_\A))$ and~$\mathscr{B}(C_0(K_\A))$
in~\ref{KKclosedideals}; and Proposition \ref{kernelofcharBCK} will show
that the kernel of the character on~$\mathscr{B}(C_0(K_\A))$ described in~\ref{ker-character} is as small as it can be.

We note that the conclusion of
Theorem~\ref{main-theorem} and its consequences~\ref{TrivDecomp}, \ref{KKclosedideals} and~\ref{ker-character} also hold true if~$C_0(K_\A)$ is replaced
with~$C(\alpha K_\A)$ because the two spaces are isomorphic.

Our construction of the almost disjoint family~$\A$ will exploit a well-known approach due to
Mr\'owka~\cite{mrowkas} that  starts from a
Borel almost disjoint family $\A\subseteq [\N]^\omega$ and then takes advantage of the perfect
set property for Borel sets. However, in the noncommutative setting
of operators rather than  continuous functions, instead of the usual partitioners
of an almost disjoint family, we need to consider $\N\times \N$ matrices representing operators.
The behaviour of such a matrix on the set  $A\cup B$ for $A, B\in \A$ is not completely determined by its behaviour on~$A$ and~$B$ alone, as it also
depends on the  entries of the matrix  belonging to $(A\setminus B)\times (B\setminus A)$ and $(B\setminus A)\times (A\setminus B)$. Overcoming these obstacles requires us to find a new
approach, which is inspired by the theory of compact and weakly compact operators on~$c_0$ and~$\ell_\infty$, and also to work with Borel subsets of Cartesian products.

\subsection*{Organization.} 
 The paper is organized as follows: we begin by recalling the main
 topological properties of spaces of the form~$K_\A$ in
 Section~\ref{section2}, before we sketch the key parts of Mr\'owka's
 original argument that our construction is modelled after. In the
 remainder of Section~\ref{section2} we introduce some basic notions
 and establish Theorem~\ref{main-theorem} subject to a sequence of 
 lemmas, whose proofs are then given in 
 Sections~\ref{scattered}, \ref{matrices-section} and~\ref{borel-section}.

  Theorem~\ref{main-theorem} and its consequences interact with a considerable body of known results 
concerning set theory, logic, topology, Banach spaces and Banach algebras. 
We discuss this in detail in 
Section \ref{final}, including the structure of continuous self-maps of~$\alpha K_\A$ and 
 the minimality 
 of the kernel of the character on~$\BB(C_0(K_\A))$  mentioned in items~\ref{cont-maps} and~\ref{ker-character} above, as well as
 the automatic continuity of homomorphisms from $\BB(C_0(K_\A))$ into a Banach algebra and 
 the place of our
 Banach space
 $C_0(K_\A)$ with few operators among other well-known Banach spaces with few operators.  Section~\ref{final} also contains some
 open questions which arise naturally from our results, including a discussion of their context and motivation.

\subsection*{Notation and terminology.} Our notation and terminology 
is mostly standard. We list the key parts here.
None of our arguments concerning Banach spaces depend on whether the space is over the real or complex
numbers, so the scalar field will be denoted generically by~$\K$.
The Banach algebra of all bounded linear operators
on a Banach space~$\mathcal{X}$ is denoted~$\BB(\mathcal{X})$, and the
identity operator on~$\mathcal{X}$ is~$\Id_{\mathcal{X}}$, or simply~$\operatorname{Id}$ if~$\mathcal X$ is clear from the context. We write
$\mathcal{X}\sim\mathcal{Y}$ to signify that the Banach
spaces~$\mathcal{X}$ and~$\mathcal{Y}$ are isomorphic. 

All topological spaces that we consider are 
Hausdorff. 
For a locally compact Hausdorff space~$K$, $C_0(K)$
denotes the Banach
space of all  $\K$-valued continuous functions~$f$ on $K$ such that the set
$\{x\in K: \lvert f(x)\rvert\geq \varepsilon\}$ is compact for each $\varepsilon>0$, endowed with the supremum norm.
By $1_A$ we mean the characteristic function of a set $A$. 
This notation is slightly  ambiguous as the domain of the function~$1_A$ depends on the ambient set
of $A$; it should always be clear from the context. We write $f|X$
for the restriction of a function $f$ to a sub\-set~$X$ of its domain.

The  \emph{Cantor cube} is denoted by~$\can$. It is the set of all
sequences of zeros and ones, and  is a compact Hausdorff space with respect to the product topology. For a subset~$A$ of~$\N$, we may naturally regard $1_A$ as an element of~$\can$.  

Given  a scalar sequence $(a_n)_{n\in \N}$ 
and a set $A\in \sub$, we write $\lim_{n\in A}a_n=b$
to mean that the sequence $(a_{n_k})_{k\in\N}$ converges and has limit~$b$, where 
$(n_k)_{k\in \N}$ is the increasing enumeration of~$A$.

The symbol $\mathfrak c$ will stand for the cardinality of the
continuum.
\section{Outline of the proof of Theorem~\ref{main-theorem}}\label{section2}
\noindent
In this section we shall prove Theorem~\ref{main-theorem} subject to a sequence of 
lemmas whose proofs will then be given in the subsequent sections  of the paper. The purpose of this organization is to give a clear overview of the proof before we go into the details.
We work with the Banach space~$\X_\A$ using its representation as a space of the form~$C_0(K_\A)$ for a locally compact Hausdorff space~$K_\A$,  so we begin by recalling the details of this representation.

\subsection{The locally compact space $K_\A$ induced by an almost disjoint family~$\A$}\label{psi-section}
 Recall that the Banach space $\mathcal{X}_\A$ was defined in Definition~\ref{definition-psi}.

\begin{definition} Let $\A\subseteq\sub$ be an almost disjoint family. Then
$K_\A$ denotes the topological space consisting of distinct
points $\{x_n: n\in \N\}\cup\{y_A: A\in \A\}$, where~$x_n$ is isolated
for every $n\in \N$ and the sets
$$U(A, F)=\{x_n: n\in A\setminus F\}\cup\{y_A\}$$
for $F\in\fin$ form a neighbourhood basis at each point $y_A$ for 
$A\in \A$. We write $U(A)$ for $U(A, \emptyset)$.
\end{definition}

The following lemma summarizes well known consequences of standard general
topological results (see \cite{hrusak1, hrusak2}, and \cite{engelking} for general background).

\begin{lemma}\label{topological} Let $\A\subseteq\sub$ be an almost disjoint family. Then:
  \begin{itemize}
  \item $K_\A$ is a locally compact, scattered Hausdorff space.
  \item   $K_\A$ is compact  if
    and only if $\A$ and $\N\setminus\bigcup\A$ are both finite.
  \item $\{x_n: n\in \N\}$ is the set of isolated points of~$K_\A;$ it is dense in $K_\A$, and so $K_\A$ is separable.
        Hence $K_\A$ is metrizable if and only it is second countable, if and only if $\A$ is countable. 
\item The subspace
$K_\A\setminus \{x_n: n\in \N\} = \{y_A: A\in \A\}$ 
  is closed and discrete.
  \item The sequence $(x_n)_{n\in A}$ converges to $y_A$ in $K_\A$ for every
    $A\in \A$.
\end{itemize}    
\end{lemma}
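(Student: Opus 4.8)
The plan is to verify each of the five bullet points directly from the definition of the neighbourhood basis $\{U(A,F) : F\in\fin\}$ at the points $y_A$, invoking only a couple of standard facts about locally compact Hausdorff and metrizable spaces. The first thing I would check is the Hausdorff property, because this is the only place the almost disjointness of $\A$ enters: two distinct isolated points are separated by their singletons; an isolated point $x_n$ and a point $y_A$ are separated by $\{x_n\}$ and $U(A,\{n\})$; and two points $y_A,y_{A'}$ with $A\neq A'$ are separated by $U(A,A\cap A')$ and $U(A',A\cap A')$, which are disjoint precisely because $A\cap A'\in\fin$. For local compactness, each $x_n$ has the compact neighbourhood $\{x_n\}$, while $U(A)$ is a compact neighbourhood of $y_A$: it is homeomorphic to the one-point compactification of the countable discrete space $\{x_n : n\in A\}$, so in any open cover the member containing $y_A$ already contains all but finitely many points of $U(A)$. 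Scatteredness then follows quickly: in any nonempty $S\subseteq K_\A$ containing some $x_n$, that point is isolated in $S$; and if $S\subseteq\{y_A:A\in\A\}$ then $U(A)$ witnesses that each $y_A\in S$ is isolated in $S$.

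Next I would dispose of the two easy structural bullets. The set of isolated points is exactly $\{x_n:n\in\N\}$, since every $x_n$ is isolated by definition while every basic neighbourhood $U(A,F)$ of $y_A$ contains the infinitely many points $x_n$, $n\in A\setminus F$; the same observation shows $\{x_n:n\in\N\}$ is dense (every nonempty open set contains a basic open set, and every basic open set contains some $x_n$), hence $K_\A$ is separable. The subspace $\{y_A:A\in\A\}$ is closed because its complement $\bigcup_{n}\{x_n\}$ is open, and discrete because $U(A)$ meets it only in $y_A$; and $(x_n)_{n\in A}\to y_A$ because each basic neighbourhood of $y_A$ omits only finitely many $x_n$ with $n\in A$.

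For the compactness criterion, if $\A$ and $\N\setminus\bigcup\A$ are both finite then $K_\A$ is the union of the finitely many compact sets $U(A)$, $A\in\A$, together with finitely many isolated points, hence compact; conversely, if $K_\A$ is compact then the closed discrete subspace $\{y_A:A\in\A\}$ is compact and discrete, so $\A$ is finite, whence $\{x_n:n\in\N\setminus\bigcup\A\}=K_\A\setminus\bigcup_{A\in\A}U(A)$ is a closed, discrete, hence finite, subspace and $\N\setminus\bigcup\A$ is finite.

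The metrizability equivalences are the one place where an external theorem is genuinely used, and this is the step I would treat most carefully. Since $K_\A$ is locally compact Hausdorff it is regular, so the Urysohn metrization theorem gives that second countability implies metrizability, while the converse holds because $K_\A$ is separable and separable metrizable spaces are second countable. Finally, $K_\A$ is second countable if and only if $\A$ is countable: if $\A$ is countable the underlying set is countable and the countably many basic sets $\{x_n\}$ and $U(A,F)$ form a base; if $\A$ is uncountable then $K_\A$ contains the uncountable closed discrete subspace $\{y_A:A\in\A\}$, which no second countable, hence hereditarily separable, space can contain. None of this presents a real obstacle; the only things to be careful about are using the right standard implication in each direction of the metrizability claim and keeping track that almost disjointness is needed solely for the Hausdorff property.
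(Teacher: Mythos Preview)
Your proof is correct and complete; the paper itself does not give a proof of this lemma, stating only that it ``summarizes well known consequences of standard general topological results'' with references to the surveys~\cite{hrusak1, hrusak2} and~\cite{engelking}. Your direct verification from the neighbourhood basis is precisely the kind of routine argument those references cover, so there is nothing to compare.
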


\begin{lemma} The Banach space $\X_\A$ is isometric to the space 
of all continuous functions on $K_\A$ vanishing at infinity.
\end{lemma}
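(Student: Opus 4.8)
The plan is to produce a concrete isometric isomorphism between $\X_\A$ and $C_0(K_\A)$ by identifying the isolated points $x_n$ of $K_\A$ with the natural numbers $n\in\N$. Under this identification, each bounded function $f$ on $\N=\{x_n:n\in\N\}$ extends uniquely to a function on $K_\A$ that is continuous and vanishes at infinity precisely when certain limits exist; one then checks that the building blocks $1_A$ for $A\in\A\cup\fin$ correspond exactly under this map to functions in $C_0(K_\A)$, so that the correspondence carries the closed span $\X_\A$ onto $C_0(K_\A)$ isometrically.

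First I would record the following elementary observation about $C_0(K_\A)$: since $\{x_n:n\in\N\}$ is dense in $K_\A$ (by Lemma~\ref{topological}), a continuous function on $K_\A$ is determined by its values on this set, so the restriction map $R\colon C_0(K_\A)\to\ell_\infty$, $Rf=(f(x_n))_{n\in\N}$, is a well-defined linear map; it is injective by density, and it is an isometry because the isolated points are dense and so $\sup_{n}|f(x_n)|=\|f\|_\infty$ for every $f\in C_0(K_\A)$ (the values at the non-isolated points $y_A$ are limits of values at isolated points, using the last clause of Lemma~\ref{topological}, hence do not increase the supremum). So the task reduces to identifying the image $R[C_0(K_\A)]$ inside $\ell_\infty$ and showing it equals $\X_\A$.

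Next I would characterize $R[C_0(K_\A)]$. A bounded sequence $(a_n)_{n\in\N}$ lies in the image exactly when: (a) for every $A\in\A$ the limit $\lim_{n\in A}a_n$ exists (this forces the candidate extension $\tilde f$ with $\tilde f(x_n)=a_n$ and $\tilde f(y_A)=\lim_{n\in A}a_n$ to be well-defined, and continuity at each $y_A$ follows because the basic neighbourhoods $U(A,F)$ consist of $y_A$ together with a cofinite subset of $(x_n)_{n\in A}$); and (b) the vanishing-at-infinity condition holds, which given that $\{y_A:A\in\A\}$ is closed and discrete (Lemma~\ref{topological}) unwinds to: for every $\varepsilon>0$, the set $\{y_A:|\lim_{n\in A}a_n|\geq\varepsilon\}$ is finite and the set $\{n\in\N:|a_n|\geq\varepsilon\}$ meets $\bigcup\A$ only inside finitely many members of $\A$, i.e.\ $\{n:|a_n|\geq\varepsilon\}$ is "almost'' a finite set modulo the family $\A$. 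Conversely, any such $(a_n)$ produces, via the formula above, a function $\tilde f$ on $K_\A$, and one verifies continuity at every point (trivially at isolated points, and at each $y_A$ using (a) together with the almost disjointness to handle the interaction with other members of $\A$) and the vanishing-at-infinity property from (b). This establishes $R[C_0(K_\A)]=\{(a_n):\text{(a) and (b) hold}\}$.

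Finally I would show this set equals $\X_\A$. For one inclusion, each $1_A$ with $A\in\A$ clearly satisfies (a) (with $\lim_{n\in A'}1_A(n)$ equal to $1$ if $A'=A$ and $0$ otherwise, by almost disjointness) and (b) (for $\varepsilon\le 1$ the relevant set is $A$ itself, which sits inside the single member $A\in\A$, and the only $y_{A'}$ with limit $\ge\varepsilon$ is $y_A$); each $1_F$ with $F\in\fin$ satisfies (a) and (b) trivially; and the set described by (a)+(b) is a closed subspace of $\ell_\infty$ (closedness under uniform limits is a routine $\varepsilon/3$ argument), so it contains $\X_\A=\overline{\operatorname{span}}\{1_A:A\in\A\cup\fin\}$. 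For the reverse inclusion, given $(a_n)$ satisfying (a) and (b), a standard approximation argument shows it lies in $\X_\A$: fix $\varepsilon>0$, use (b) to find finitely many $A_1,\dots,A_k\in\A$ and a finite set $F$ outside which $|a_n|<\varepsilon$ except on $A_1\cup\dots\cup A_k$, subtract off suitable scalar multiples of $1_{A_1},\dots,1_{A_k}$ (with the scalars $\lim_{n\in A_i}a_n$) and a correction supported on the finite set $F\cup(\bigcup_i A_i\cap A_j)$, leaving a remainder of norm at most a fixed multiple of $\varepsilon$; letting $\varepsilon\to0$ shows $(a_n)\in\X_\A$. The main obstacle is bookkeeping in this last approximation step: the members $A_1,\dots,A_k$ are only almost disjoint, so their pairwise finite intersections, and the finitely many isolated points where the tail bound fails, must be absorbed into an $\fin$-supported correction term without spoiling the norm estimate; once this is organized carefully the equality $\X_\A=R[C_0(K_\A)]$, hence the desired isometry, follows.
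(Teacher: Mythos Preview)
Your proposal is correct, but it takes a longer and more explicit route than the paper. Both arguments start from the same restriction map $R\colon C_0(K_\A)\to\ell_\infty$, $Rf=(f(x_n))_{n\in\N}$, and both note that density of the isolated points makes $R$ an isometry. The difference lies in how the image is identified with $\X_\A$. You characterize $R[C_0(K_\A)]$ intrinsically via the convergence condition~(a) and the compactness condition~(b), and then run a direct approximation argument to show this set equals $\X_\A$. The paper instead exploits the zero-dimensional structure of $K_\A$: the characteristic functions of compact open sets span a dense subspace of $C_0(K_\A)$, and every compact open set is a finite union of sets of the form $U(A,F)$ and singletons $\{x_n\}$, whose characteristic functions map under $R$ to $1_{A\setminus F}$ and $1_{\{n\}}$, the generators of $\X_\A$. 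Since $R$ is an isometry, matching generators suffices.

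Your approach buys an explicit description of $\X_\A$ inside $\ell_\infty$ (conditions (a) and (b)), which is informative in its own right, and it avoids invoking the general fact that indicators of compact open sets generate $C_0(K)$ for locally compact zero-dimensional $K$. The paper's approach is considerably shorter and sidesteps the approximation bookkeeping entirely. One small point: your formulation of (b) is slightly imprecise about integers $n\notin\bigcup\A$; the clean statement is that for every $\varepsilon>0$ there are finitely many $A_1,\dots,A_k\in\A$ such that $\{n:|a_n|\ge\varepsilon\}\setminus(A_1\cup\dots\cup A_k)$ is finite. With that correction, your approximation step goes through exactly as you outline.
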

\begin{proof} Consider the operator
$T: C_0(K_\A)\rightarrow \ell_\infty$ defined by $T(f)(n)=f(x_n)$ for each $n\in \N$.
We have $T(1_{U(A, F)})=1_{A\setminus F}\in \X_\A$ and $T(1_{\{ x_n\}})=1_{\{n\}}\in \X_\A$. 
All compact open sets of $K_\A$ are finite unions of sets of the form
$U(A, F)$ and finite subsets of $\{x_n: n\in \N\}$, so the range 
of $T$ is $\X_\A$.
By the density of $\{x_n: n\in \N\}$ in~$K_\A$, the operator $T$ is an isometry.
\end{proof}

\subsection{Constructing 
$C_0(K_\A)$ with few multiplication operators}\label{few-multiplications-section}

This subsection is not needed for obtaining the proofs of any results
in the remainder of this paper.  That is why the proofs are omitted or
sketched.  We provide this subsection as a motivation for the next
one. It is an extraction of some ideas from \cite{mrowka}
(cf.\ \cite{hrusak1}), which are later modified.  The main result of
this subsection is Theorem \ref{existence-F} where the existence of an
almost disjoint family $\B\subseteq\sub$ is proved such that~$K_\B$
has few bounded continuous scalar-valued functions, so in particular
$C_0(K_\B)$ has few multiplication operators. In the next subsection
we show how to modify these ideas to obtain $C_0(K_\B)$ with few
operators.

The main idea is to consider all potential
 bounded continuous functions~$F$ on~$K_\B$ by representing 
them as elements~$f$ of~$\ell_\infty$ given by $f(n)=F(x_n)$.  We may enumerate these elements
as $\{f_\xi: \xi<\mathfrak c\}$.
The family
 $\B=\{B_\xi: \xi<\mathfrak c\}$ is constructed by a transfinite induction of
length $\mathfrak c$, making sure that $B_\xi$ rejects $f_\xi$, i.e., it witnesses
that $f_\xi$ does not represent a bounded continuous function on $K_\B$ unless
$f_\xi$ is of the form mentioned in Theorem \ref{existence-F}.
This can be achieved, for example, by choosing $B_\xi$ such that $\lim_{k\in B_\xi}f_\xi(k)$
does not exist, because $\lim_{k\in B_\xi}F(x_k)$ must exist and be
equal to $F(y_{B_\xi})$
for every continuous function $F\colon K_\B\to\K$  since $(x_k)_{k\in B_\xi}$ converges
 to $y_{B_\xi}$ in $K_\B$ by Lemma \ref{topological}. 

 It is useful to introduce the following terminology:
\begin{definition}\label{admits-F} Suppose that $\A\subseteq\sub$ and $A\in\sub$. We say that:
\begin{itemize}
\item $A$ \emph{admits} $f\in \ell_\infty$ if $\lim_{n\in A}f(n)=0$.
\item $\A$ \emph{admits} $f$ if $A$ admits $f$ for every $A\in \A$.
\item $A$ \emph{rejects} $f\in \ell_\infty$ if  $\lim_{n\in A}f(n)$ does not exist.
\item $\A$ \emph{rejects} $f$ if $A$ rejects $f$ for some $A\in \A$.
\end{itemize}
\end{definition}

\begin{lemma}\label{monotonicity} Suppose that $f\in \ell_\infty$ and that
 $A, A'\in\sub$.
\begin{enumerate}
\item \emph{(Monotonicity)} If $\{A, A'\}$ admits $f$, then
$A\cup A'$ admits $f$.
\item \emph{(Decidability)} If $A$ does not  admit $f-\lambda 1_\N$ for any $\lambda\in \K$, 
then  $A'$ rejects~$f$ for every $A'\supseteq A$.
\end{enumerate}
\end{lemma}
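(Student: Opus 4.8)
The plan is to prove both parts directly from Definition~\ref{admits-F}, working with the increasing enumerations of the relevant sets and using elementary facts about convergence of subsequences.

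For part~(1), suppose $\{A,A'\}$ admits~$f$, so $\lim_{n\in A}f(n)=0=\lim_{n\in A'}f(n)$. I would argue by contradiction: if $A\cup A'$ does not admit~$f$, then there is $\varepsilon>0$ such that $\lvert f(n)\rvert\geq\varepsilon$ for infinitely many $n\in A\cup A'$. Since $A\cup A'=A\cup A'$ is the union of two sets, at least one of them, say~$A$, contains infinitely many such~$n$; but then along the increasing enumeration of~$A$ the sequence $(f(n))$ does not converge to~$0$, contradicting the hypothesis that $A$ admits~$f$. (The only subtlety is translating ``infinitely many $n\in A$ with $\lvert f(n)\rvert\geq\varepsilon$'' into a statement about the increasing enumeration $(n_k)_{k\in\N}$ of~$A$, namely that $(f(n_k))_k$ has a subsequence bounded away from~$0$, hence does not converge to~$0$; this is routine.)

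For part~(2), assume $A$ does not admit $f-\lambda 1_\N$ for any $\lambda\in\K$, i.e.\ $\lim_{n\in A}(f(n)-\lambda)\neq 0$ for every scalar~$\lambda$, equivalently $\lim_{n\in A}f(n)\neq\lambda$ for every~$\lambda$, which says precisely that $\lim_{n\in A}f(n)$ does not exist. Now let $A'\supseteq A$. I claim $A'$ rejects~$f$, i.e.\ $\lim_{n\in A'}f(n)$ does not exist. Indeed, if it did exist and equalled some~$b$, then every subsequence of $(f(n))_{n\in A'}$ (along the increasing enumeration of~$A'$) would also converge to~$b$; in particular, since $A\subseteq A'$ is infinite, the subsequence indexed by the elements of~$A$ would converge to~$b$, giving $\lim_{n\in A}f(n)=b$, a contradiction. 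Here I use the standard fact that if a sequence converges then the subsequence obtained by restricting to any infinite subset of the index set converges to the same limit; one must be slightly careful that the increasing enumeration of~$A$ is a subsequence of the increasing enumeration of~$A'$ when $A\subseteq A'$, but this is immediate.

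I do not expect any genuine obstacle here; both parts are bookkeeping with subsequences. The only point requiring minor care is keeping the ``$\lim_{n\in A}$'' notation (which refers to the increasing enumeration) aligned with the informal ``convergence along~$A$'' used in the arguments, and, in part~(2), unpacking the phrase ``does not admit $f-\lambda 1_\N$ for any~$\lambda$'' into ``$\lim_{n\in A}f(n)$ does not exist'' before invoking the subsequence fact. Both verifications are short enough to be included in full.
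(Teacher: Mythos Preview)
Your proposal is correct; both parts are routine subsequence arguments, and your write-up handles the only mildly delicate point (that the increasing enumeration of~$A$ is a subsequence of that of~$A'$ when $A\subseteq A'$). The paper in fact omits the proof of this lemma entirely (the authors explain that this subsection is motivational and proofs are ``omitted or sketched''), so there is nothing to compare against beyond noting that your argument is exactly the elementary verification they have in mind.
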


The following lemma explains what properties
of an almost disjoint family in terms of rejection and admission
are needed to obtain Theorem \ref{existence-F}.

\begin{lemma}[Reduction]\label{reduction}
Suppose that  $\B\subseteq\sub$ is an almost disjoint family such that, for every $f\in \ell_\infty$,
either $\B$ rejects $f$ or there is $\lambda_f\in \K$ and 
a countable subset $\B_f\subseteq \B$ such that $\B\setminus \B_f$ admits $f-\lambda_f 1_\N$.
Then all bounded continuous functions $F:K_\B\rightarrow \K$ are of the form
$$F=\lambda 1_{K_\B}+ G,$$
 where $\lambda\in \K$ and $G$ is nonzero only on countably
many points of $K_\B$.
\end{lemma}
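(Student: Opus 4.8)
The plan is to take an arbitrary bounded continuous function $F\colon K_\B\to\K$, pass to the element $f\in\ell_\infty$ given by $f(n)=F(x_n)$, and apply the hypothesis to~$f$. First I would rule out that~$\B$ rejects~$f$: if some $A\in\B$ rejected~$f$, then $\lim_{n\in A}f(n)=\lim_{n\in A}F(x_n)$ would fail to exist, contradicting the fact that $(x_n)_{n\in A}$ converges to $y_A$ in $K_\B$ (Lemma~\ref{topological}) and $F$ is continuous, so this limit must exist and equal $F(y_A)$. Hence the other alternative of the hypothesis holds: there are $\lambda_f\in\K$ and a countable $\B_f\subseteq\B$ with $\B\setminus\B_f$ admitting $f-\lambda_f 1_\N$, i.e.\ $\lim_{n\in A}f(n)=\lambda_f$ for every $A\in\B\setminus\B_f$.

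Next I would set $\lambda=\lambda_f$ and $G=F-\lambda 1_{K_\B}$, and verify that $G$ is nonzero on only countably many points. The points of $K_\B$ split into the isolated points $\{x_n:n\in\N\}$ and the points $\{y_A:A\in\B\}$. For the limit points: if $A\in\B\setminus\B_f$ then $F(y_A)=\lim_{n\in A}F(x_n)=\lim_{n\in A}f(n)=\lambda$, so $G(y_A)=0$; thus $G$ can be nonzero only at the countably many points $\{y_A:A\in\B_f\}$ among the limit points. For the isolated points: I claim $f(n)=F(x_n)\to 0$ "off a countable set" in the appropriate sense — more precisely, that $G(x_n)=f(n)-\lambda$ is nonzero for only countably many~$n$. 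Here is where a little care is needed, and it is the one genuinely non-routine point: a priori $f(n)-\lambda$ could be nonzero for infinitely many~$n$.

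To handle this, observe that if $f(n)-\lambda\ne 0$ for infinitely many~$n$, there is an $\varepsilon>0$ and an infinite set $C=\{n:|f(n)-\lambda|\ge\varepsilon\}$, or at least an infinite $C$ on which $|f(n)-\lambda|\ge\varepsilon$ for some fixed $\varepsilon>0$ after passing to a subset. But the set $\{x\in K_\B:|F(x)-\lambda|\ge\varepsilon\}$ is closed in $K_\B$ and, since $F\in C_0(K_\B)$ when $\lambda=0$ — or more robustly, since this set contains no limit point $y_A$ with $A\in\B\setminus\B_f$ and $\{y_A:A\in\B_f\}$ is countable — it must in fact be such that $C$ meets only finitely many $A\in\B$ nontrivially except inside the countable $\B_f$. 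The cleanest route: $C$ is infinite, and by maximality considerations $C$ need not lie in~$\B$, but we can still argue that $\lim_{n\in A}f(n)=\lambda\neq$ the values on $C$ forces $C\cap A$ finite for all $A\in\B\setminus\B_f$; combined with almost disjointness and a counting argument this confines the "bad" indices. In fact I expect the argument actually used is simpler: since $F\in C_0(K_\B)$, passing to $\lambda=0$ after subtracting, the set $\{x_n:|F(x_n)|\ge\varepsilon\}$ is relatively compact in the discrete set of isolated points, hence finite, for every $\varepsilon>0$; so $F(x_n)\to 0$ and $G(x_n)=F(x_n)$ is nonzero for at most countably many~$n$ (a countable union over $\varepsilon=1/k$ of finite sets).

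The main obstacle is thus reconciling the hypothesis (phrased via $1_\N$, i.e.\ allowing a nonzero constant $\lambda_f$) with the $C_0$-vanishing at infinity: one must check that the only constant that can appear is genuinely recovered as $\lambda_f$ and that after subtracting it the remainder vanishes at infinity on the isolated points, which is exactly where the countability of the exceptional set comes from. Once this bookkeeping is done, writing $F=\lambda 1_{K_\B}+G$ with $G$ supported on the countable set $\{y_A:A\in\B_f\}\cup\{x_n: G(x_n)\ne 0\}$ completes the proof.
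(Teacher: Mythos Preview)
Your first paragraph and your treatment of the limit points $y_A$ in the second paragraph are correct and match the paper's proof exactly. However, everything you write about the isolated points is unnecessary: the set $\{x_n : n\in\N\}$ is itself countable, so $G$ is trivially nonzero on only countably many isolated points, regardless of its values there. The paper simply observes that $G(y_B)=0$ for all $B\in\B\setminus\B_f$ and stops, since the set of points where $G$ may be nonzero is then contained in $\{x_n:n\in\N\}\cup\{y_A:A\in\B_f\}$, a countable set.

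Your attempt to argue further about the isolated points also introduces an actual error: you write ``since $F\in C_0(K_\B)$'', but the hypothesis is only that $F$ is a \emph{bounded continuous} function on $K_\B$, not that it vanishes at infinity. Indeed, the constant function $\lambda 1_{K_\B}$ with $\lambda\ne 0$ is bounded and continuous but not in $C_0(K_\B)$ when $K_\B$ is noncompact, which is the interesting case. So the compactness argument you sketch for $\{x_n:|G(x_n)|\ge\varepsilon\}$ does not go through. Fortunately none of this matters, since the whole discussion of isolated points is superfluous once you notice they are countable to begin with.
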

\begin{proof}
Let $F: K_\B\rightarrow\K$ be a bounded continuous function, and
let $f\in \ell_\infty$ be given by $f(n)=F(x_n)$. 
If $f$ was rejected by $\B$  as witnessed by $B\in\B$, this would contradict the continuity
of $F$ at $y_B$ as $\lim_{k\in B}x_k=y_B$ in $K_\B$ by Lemma
\ref{topological},
so by the property of
$\B$, there is a countable $\B_f\subseteq \B$ and $\lambda_f\in \K$ such that
$\B\setminus \B_f$ admits $f-\lambda_f 1_\N$. This means that 
$\lim_{k\in B}f(k)=\lambda_f$ for all $B\in \B\setminus \B_f$, and so for
$G=F-\lambda_f 1_{K_\B}$, we have
$$G(y_B)=(F-\lambda_f1_{K_\B})(y_B)=\lim_{k\in B}(F-\lambda_f1_{K_\B})(x_k)=
\lim_{k\in B}(f(k)-\lambda_f)=0$$ 
 for all but countably many $B\in \B$, which completes the proof  the lemma.
 \end{proof}

To construct $\B=\{B_\xi: \xi<\mathfrak c\}$ satisfying the hypothesis of Lemma \ref{reduction} and
consequently Theorem \ref{existence-F}, we need to make sure that 
the transfinite induction can be continued. There are many dangers along the way. One of them
is the well-known fact that it is consistent that there are maximal almost disjoint
families of cardinality less than $\mathfrak c$.   The main idea of how to avoid
these dangers is to obtain $\B$ as a simple modification of
an uncountable,  almost disjoint family which is Borel in the sense of Lemma \ref{borel-ad}.
Such a family is either countable or has  cardinality~$\mathfrak c$ by a powerful classical dichotomy due to Alexandrov and Hausdorff; see Lemma~\ref{borel-ch} for details.

Before we can execute this strategy, we require a few preparations.

\begin{lemma}[Borel set and function induced by $f\in\ell_\infty$]\label{borel-function}
 Let $f\in \ell_\infty$. Then
\[ D(f)=\{1_A\in \can: A \ \hbox{admits} \ f-\lambda1_\N\ \hbox{for some}\ \lambda\in \K\} \]
is a Borel  subset of~$\can$, and the function 
$L_f: D(f)\rightarrow \K$ given by $L_f(1_A)=\lim_{n\in A}f(n)$
is Borel measurable.
\end{lemma}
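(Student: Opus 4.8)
The plan is to analyze the set $D(f)$ by a direct combinatorial unravelling of the statement ``$A$ admits $f - \lambda 1_\N$ for some $\lambda$'' and show that each layer of quantifiers lands us at a manageable level of the Borel hierarchy. First I would note that $A$ admits $f - \lambda 1_\N$ means $\lim_{n \in A} f(n) = \lambda$, and that by the Decidability part of Lemma~\ref{monotonicity} (or simply by Cauchy's criterion for the subsequence indexed by $A$), the set of $A$ such that $\lim_{n \in A} f(n)$ exists can be written without the existential quantifier over $\lambda$: namely, $1_A \in D(f)$ if and only if the sequence $(f(n))_{n \in A}$ is Cauchy. Writing $A$ in its increasing enumeration $(n_k)_k$, this is
\[
\forall \varepsilon \in \Q^+ \ \exists N \in \N \ \forall k, l \geq N : \ \lvert f(n_k) - f(n_l)\rvert < \varepsilon.
\]
Since $A \mapsto n_k$ (the $k$-th element of $A$, defined on the clopen set of $A$ with at least $k+1$ elements, which is all of $\sub$ here since we work with infinite sets but in $\can$ we must be slightly careful — on $\can$ the enumeration maps are Borel, indeed the preimage of $\{n_k = m\}$ is clopen) is a Borel (in fact continuous on each relevant piece) function of $1_A \in \can$, each inner condition $\lvert f(n_k) - f(n_l)\rvert < \varepsilon$ defines a clopen set, so the whole expression is $\Pi^0_3$, hence Borel. (One should restrict attention to $1_A$ with $A$ infinite; the set of such is a $G_\delta$, and on finite $A$ the ``limit'' is trivially the eventual value, which is also fine — either way $D(f)$ is Borel.)

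Next, for the Borel measurability of $L_f$, I would show that for each open (equivalently, each rational half-line) $U \subseteq \K$, the set $L_f^{-1}(U)$ is Borel in $D(f)$. It suffices to treat basic sets $U = \{z : \lvert z - q\rvert < r\}$ for $q \in \Q$ (or $\Q + i\Q$), $r \in \Q^+$. On $D(f)$, we have $L_f(1_A) = \lim_{n\in A} f(n)$, and $\lvert L_f(1_A) - q\rvert < r$ holds if and only if there is a rational $r' < r$ and an $N$ such that $\lvert f(n_k) - q\rvert \leq r'$ for all $k \geq N$ (using that the tail of the sequence is eventually within any neighbourhood of the limit, together with the strict-vs-nonstrict inequality trick to capture exactly the open ball). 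This is again a countable combination of clopen conditions in the enumeration functions, so $L_f^{-1}(U) \cap D(f)$ is Borel; since $D(f)$ itself is Borel, $L_f^{-1}(U)$ is Borel in $\can$, which gives Borel measurability of $L_f$.

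I expect the only genuinely delicate point to be bookkeeping around the enumeration maps $1_A \mapsto n_k(A)$ and the treatment of finite versus infinite $A$: one must confirm these maps are Borel on the appropriate domain and handle the boundary case cleanly, but this is standard (the condition ``$n_k(A) = m$'' depends only on finitely many coordinates of $1_A$). Everything else is a routine translation of ``the limit exists'' and ``the limit lies in $U$'' into $\Pi^0_3$ and $\Sigma^0_3$ formulas respectively. If one prefers to avoid enumeration maps altogether, an alternative is to phrase the Cauchy condition directly in terms of coordinates of $1_A$: ``for every $\varepsilon \in \Q^+$ there is a finite $F \subseteq \N$ such that for all $m, m' \in \N \setminus F$, if $1_A(m) = 1_A(m') = 1$ then $\lvert f(m) - f(m')\rvert < \varepsilon$,'' which is manifestly a countable boolean combination (over the countably many choices of $F$, $m$, $m'$) of clopen sets, hence $\Pi^0_3$; and similarly for $L_f^{-1}(U)$. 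I would present this coordinate-based formulation as the main argument since it sidesteps the enumeration-map subtleties entirely.
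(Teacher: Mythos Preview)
The paper does not actually give a proof of this lemma: it sits in Subsection~2.2, where the authors explicitly say that ``the proofs are omitted or sketched'' because the subsection is purely motivational and not needed for the main results. So there is no paper proof to compare against.

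Your argument is correct and is exactly the standard one. Rewriting ``$\lim_{n\in A}f(n)$ exists'' as the Cauchy condition eliminates the existential quantifier over~$\lambda$, and your coordinate-based formulation
\[
\forall\varepsilon\in\Q^+\ \exists F\in\fin\ \forall m,m'\notin F:\ \bigl(1_A(m)=1_A(m')=1\ \Rightarrow\ |f(m)-f(m')|<\varepsilon\bigr)
\]
makes it transparent that $D(f)$ is a countable boolean combination of clopen sets, hence Borel (indeed $\Pi^0_3$). The same idea handles the preimages $L_f^{-1}(U)$ for basic open~$U$, giving Borel measurability of~$L_f$. Your treatment of the finite-$A$ boundary case is adequate: whatever convention one adopts, there are only countably many such points, so they cannot affect Borelness. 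The reference to the Decidability clause of Lemma~\ref{monotonicity} is a slight red herring (that clause is about rejection), but you immediately give the right reason, namely Cauchy's criterion.
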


\begin{lemma}[A dichotomy for Borel functions]\label{Lemma11}
Suppose that $X$ is an uncountable Borel  subset of~$\can$ and  $\phi: X\rightarrow\K$ is Borel 
measurable. Then either $\phi$ is constant on a cocountable subset of $X$
or there are pairwise disjoint sets $\{x_\xi, x_\xi'\}\subseteq X$
for $\xi<\mathfrak c$ such that $\phi(x_\xi)\not=\phi(x_\xi')$ for every
$\xi<\mathfrak c$.
\end{lemma}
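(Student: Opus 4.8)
The plan is to prove Lemma~\ref{Lemma11} by a fusion-style transfinite recursion of length~$\cc$, where at each stage we either terminate by exhibiting a cocountable set on which~$\phi$ is constant, or we extract one more pair from~$X$. First I would reduce to a convenient normal form: since~$\phi\colon X\to\K$ is Borel measurable and~$\K$ is separable, there is a countable family of Borel sets $\{X_n : n\in\N\}$ (preimages of a countable base of~$\K$) generating the Borel structure of~$\phi$ in the sense that $\phi(x)=\phi(x')$ whenever $x,x'$ lie in exactly the same sets~$X_n$. Equivalently, the value $\phi(x)$ is determined by the element $\Phi(x)=(1_{X_n}(x))_{n\in\N}\in\can$, and we may factor $\phi=\psi\circ\Phi$ for a function~$\psi$ defined on the Borel set $\Phi[X]\subseteq\can$. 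Thus it suffices to understand when two points of~$X$ can be separated by some~$X_n$.

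Next I would invoke the perfect set property for Borel sets, which is the tool the paper signals it will use (Mr\'owka's approach, the Alexandrov--Hausdorff dichotomy). Consider the Borel equivalence relation~$E$ on~$X$ given by $x\mathrel{E}x'$ iff $\Phi(x)=\Phi(x')$, i.e.\ $x$ and~$x'$ belong to the same sets~$X_n$; on each $E$-class~$\phi$ is constant. There are two cases. \textbf{Case 1:} some single $E$-class is cocountable in~$X$. Then~$\phi$ is constant on a cocountable subset of~$X$ and we are in the first alternative. \textbf{Case 2:} every $E$-class is co-uncountable, equivalently $X\setminus C$ is uncountable for every $E$-class~$C$. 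In this case I claim one can build the required family of $\cc$~many disjoint pairs. The point is that in Case~2, for any countable subset $Y\subseteq X$ already used, the set $X\setminus Y$ is still uncountable and meets at least two distinct $E$-classes (since no $E$-class is cocountable), so we may pick $x_\xi,x_\xi'\in X\setminus Y$ in different $E$-classes, giving $\phi(x_\xi)\ne\phi(x_\xi')$; here $Y=\{x_\eta,x_\eta':\eta<\xi\}$ has cardinality $<\cc$ at each stage $\xi<\cc$. A mild subtlety is that $|X\setminus Y|$ could in principle drop below~$\cc$ at intermediate stages if $X$ itself had size exactly~$\cc$ but cofinality issues intervened; this is handled because an uncountable Borel set has a perfect subset, hence has size exactly~$\cc$, and removing fewer than~$\cc$ points from a perfect Polish space still leaves a set meeting uncountably many $E$-classes (again by the perfect set property applied inside $X\setminus Y$, or simply because an $E$-class is Borel and cocountably many classes would be needed to cover).

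The genuinely delicate point — and what I expect to be the main obstacle — is ruling out an intermediate pathology in Case~2: that although no single $E$-class is cocountable, the $E$-classes might still be ``large and few'' in a way that obstructs picking $\cc$ pairs. The clean way around this is to observe that the quotient $X/E$ is either countable or of size~$\cc$: if $X/E$ is countable, then since~$X$ is uncountable one class is uncountable, and if moreover we are in Case~2 no class is cocountable, so at least two classes are uncountable, whence we can split~$X$ into two disjoint uncountable Borel pieces $P\subseteq C$ and $P'\subseteq X\setminus C$ for a fixed class~$C$, each containing a perfect set, and simply pair up a perfect subset of~$P$ with a perfect subset of~$P'$ to get $\cc$ disjoint pairs with $\phi(x_\xi)=\phi|_C\ne\phi|_{X\setminus C}\ni\phi(x_\xi')$; if instead $X/E$ is uncountable (so of size~$\cc$ by the same Borel dichotomy applied to a Borel transversal), we transfinitely choose $\cc$~many distinct classes $C_\xi$ and one point from each of $C_\xi$ and $C_{\xi'}$ for a pairing of the index set. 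Either way we obtain the desired family. I would present Case~2 via this split-into-two-Borel-pieces argument, as it is the most transparent and dovetails with the perfect set property already in play; the recursion bookkeeping then becomes essentially trivial. The remaining routine verifications — that $\phi$ factors through $\Phi$, that the relevant sets are Borel, and that disjointness of the chosen pairs is maintained — I would state without detailed computation.
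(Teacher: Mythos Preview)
Your argument is essentially correct but substantially more elaborate than needed, and one step is shakily justified. First, note that your $E$-classes are \emph{exactly} the fibres $\phi^{-1}[\{\lambda\}]$: since the $X_n$ are preimages of a base for the Hausdorff space~$\K$, we have $\Phi(x)=\Phi(x')$ iff $\phi(x)=\phi(x')$. So the whole detour through~$\Phi$ and~$E$ is unnecessary; you may as well speak directly of fibres of~$\phi$, which are Borel. Second, your appeal to a ``Borel transversal'' in Subcase~2b is not justified: smooth equivalence relations need not have Borel transversals in general. The conclusion $|X/E|=\cc$ is nevertheless correct, because $\Phi[X]$ is an analytic subset of~$\can$ and uncountable analytic sets have the perfect set property; but you should say this rather than invoke a transversal.

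The paper's proof is a one-paragraph argument avoiding all of this machinery. It splits on whether every fibre $\phi^{-1}[\{\lambda\}]$ is countable. If so, a direct transfinite recursion of length~$\cc$ works: at stage~$\xi$ pick any unused $x_\xi\in X$ (possible since $|X|=\cc$ by Lemma~\ref{borel-ch}), then pick $x_\xi'$ outside the countable fibre of~$x_\xi$ and outside the $<\cc$ points already used. If instead some fibre $Y=\phi^{-1}[\{\lambda\}]$ is uncountable, it is Borel of size~$\cc$; either~$Y$ is cocountable (first alternative) or $X\setminus Y$ is uncountable Borel of size~$\cc$ too, and one picks $x_\xi\in Y$, $x_\xi'\in X\setminus Y$ by recursion. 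Your Subcase~2a is exactly this last case, and your Subcase~2b together with the ``all fibres countable'' portion of your recursion is subsumed by the paper's first case, so the content is the same once stripped of the $\Phi$/$E$ scaffolding. What the paper's organisation buys is that it never needs to count $X/E$ or invoke anything beyond the Alexandrov--Hausdorff dichotomy for Borel sets.
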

\begin{proof} By
Lemma \ref{borel-ch}, $X$ has cardinality $\mathfrak c$. 
Consider preimages of singletons under~$\phi$, which are Borel
subsets of $X$. 
If all of them are countable, we can easily choose the sets $\{x_\xi, x_\xi'\}$
by transfinite recursion. Otherwise $Y=\phi^{-1}[\{\lambda\}]$ has cardinality $\mathfrak c$
for some $\lambda\in \K$.
If $Y$ is a cocountable subset of $X$, we are  done. 
Otherwise $X\setminus Y$ has cardinality $\mathfrak c$ by Lemma \ref{borel-ch},
and so   we can choose $x_\xi\in Y$ and $x_\xi'\in X\setminus Y$ by transfinite recursion.
\end{proof}

Combining these two lemmas, we can  prove a preparatory dichotomy.

\begin{lemma}[Dichotomy for rejection and admission]\label{dichotomy-F} 
Suppose that $\A\subseteq\sub$ is an uncountable, almost disjoint family
such that $\{1_A: A\in \A\}$ is  a Borel subset of $\can$, and let $f\in \ell_\infty$.
Then one of the following holds:
\begin{enumerate}
\item either
$\A\setminus \A_f$ admits $f-\lambda_f 1_\N$ for some countable $\A_f\subseteq\A$ and $\lambda_f\in \K$,
\item or 
there are pairwise disjoint $\{A_\xi, A_\xi'\}\subseteq \A$ for all $\xi<\mathfrak c$ such that
$A_\xi\cup A_\xi'$ rejects $f$.
\end{enumerate}
\end{lemma}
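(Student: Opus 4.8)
The plan is to apply Lemma~\ref{Lemma11} to the Borel set $X = \{1_A : A \in \A\} \cap D(f)$ and the Borel function $\phi = L_f \colon X \to \K$ supplied by Lemma~\ref{borel-function}, and then to translate the two alternatives of that dichotomy into the two alternatives of the present statement, using the monotonicity and decidability clauses of Lemma~\ref{monotonicity} to handle the noncommutative-looking interaction on unions $A_\xi \cup A_\xi'$.

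First I would split according to whether $X$ is cocountable in $\{1_A : A \in \A\}$ or not. If it is cocountable, then $\A_f := \{A \in \A : 1_A \notin D(f)\}$ is countable; applying Lemma~\ref{Lemma11} to $\phi = L_f$ on $X$ gives a further dichotomy. If $\phi$ is constant with value $\lambda_f$ on a cocountable subset of $X$, then throwing the (countably many) exceptional $A$ into $\A_f$, every remaining $A \in \A \setminus \A_f$ satisfies $\lim_{n \in A} f(n) = \lambda_f$, i.e.\ $A$ admits $f - \lambda_f 1_\N$; this is alternative~(1). If instead Lemma~\ref{Lemma11} yields pairwise disjoint $\{x_\xi, x_\xi'\} \subseteq X$ with $\phi(x_\xi) \neq \phi(x_\xi')$, write $x_\xi = 1_{A_\xi}$ and $x_\xi' = 1_{A_\xi'}$; then $\lim_{n \in A_\xi} f(n)$ and $\lim_{n \in A_\xi'} f(n)$ both exist but are distinct, so by monotonicity (Lemma~\ref{monotonicity}(1), applied after subtracting the common scalar) $A_\xi \cup A_\xi'$ cannot have a limit, hence rejects $f$; this is alternative~(2). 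The only subtlety here is that ``$x_\xi \neq x_\xi'$ as elements of $\can$'' must be upgraded to ``$A_\xi \neq A_\xi'$ as sets'', which is immediate, and that the disjointness of the pairs $\{x_\xi, x_\xi'\}$ in $\can$ passes to disjointness of the pairs $\{A_\xi, A_\xi'\}$ in $\A$.

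It remains to deal with the case where $X$ is \emph{not} cocountable in $\{1_A : A \in \A\}$, i.e.\ uncountably many $A \in \A$ fail to admit $f - \lambda 1_\N$ for every $\lambda$. For each such $A$, by decidability (Lemma~\ref{monotonicity}(2)) every superset of $A$ rejects $f$; in particular $A$ itself rejects $f$, but more usefully any union $A \cup A'$ rejects $f$ regardless of $A'$. Since $\{1_A : A \in \A\} \setminus X$ is an uncountable Borel subset of $\can$ (it is the intersection of the Borel set $\{1_A : A \in \A\}$ with the Borel complement of $D(f)$), it has cardinality $\mathfrak c$ by Lemma~\ref{borel-ch}, so we may list $\mathfrak c$ many distinct elements $1_{A_\xi}$ of it and pair each with an arbitrary distinct $A_\xi' \in \A$ (or even take $A_\xi' = A_\xi$, but for the stated conclusion we want genuine pairs, so choose them disjointly by transfinite recursion, which is possible since at each stage only fewer than $\mathfrak c$ sets have been used); each $A_\xi \cup A_\xi'$ rejects $f$ by decidability, giving alternative~(2) again.

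The main obstacle I anticipate is purely bookkeeping: ensuring the pairs $\{A_\xi, A_\xi'\}$ can be chosen \emph{pairwise disjoint} (as required by the statement) rather than merely distinct. In the first sub-case Lemma~\ref{Lemma11} already delivers pairwise disjoint pairs, so nothing is needed. In the last sub-case one chooses the pairs by transfinite recursion of length $\mathfrak c$: having selected $\{A_\eta, A_\eta'\}$ for $\eta < \xi$, the set of ``forbidden'' elements of $\A$ has size $< \mathfrak c$, and both the uncountable Borel set $\{1_A : A \in \A\} \setminus X$ (of size $\mathfrak c$) and, if needed, $\A$ itself still contain unused members, so the recursion does not get stuck. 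Beyond this there is no real difficulty; the conceptual content is entirely contained in Lemmas~\ref{monotonicity}, \ref{borel-function}, \ref{borel-ch} and~\ref{Lemma11}, and the present lemma is their straightforward combination.
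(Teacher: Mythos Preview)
Your proposal is correct and follows essentially the same route as the paper: split according to whether $\{1_A : A\in\A\}\setminus D(f)$ is countable, apply Lemma~\ref{borel-ch} and Decidability in the uncountable case, and apply Lemma~\ref{Lemma11} to~$L_f$ in the countable case. Two cosmetic remarks: (i) in the sub-case where $L_f(1_{A_\xi})\ne L_f(1_{A_\xi'})$, the fact that $A_\xi\cup A_\xi'$ rejects~$f$ is simply because a sequence with two subsequences converging to distinct limits does not converge---your appeal to ``monotonicity'' and ``subtracting the common scalar'' is misleading, since there is no common scalar and Lemma~\ref{monotonicity}(1) is not what is being used; (ii) in the case where uncountably many $A$ lie outside $D(f)$, the paper simply partitions that size-$\mathfrak c$ set into disjoint two-element pairs (both members outside $D(f)$), which is slightly cleaner than your recursion pairing each bad $A_\xi$ with an arbitrary $A_\xi'\in\A$, though both arguments are valid via Decidability.
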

\begin{proof}
Let $X=\{1_A: A\in \A\}\subseteq\can$, and suppose that $X\setminus D(f)$ is uncountable. Then
 $X\setminus D(f)$ has cardinality $\mathfrak c$ by Lemma \ref{borel-ch}.
If 
$1_{A}, 1_{A'} \in X\setminus D(f)$,
then  by the Decidability of Lemma \ref{monotonicity},  $A\cup A'$ rejects $f$,
so any pairwise disjoint family of two-element subsets of $X\setminus D(f)$ satisfies
the second alternative of the lemma.

Now suppose that $X\setminus D(f)$ is countable, which means that $X\cap D(f)$  has cardinality~$\mathfrak{c}$ by Lemma \ref{borel-ch}. Lemmas \ref{borel-function} and~\ref{Lemma11} yield
two possibilities for the function $\phi=L_f:D(f)\rightarrow \K$.
If $L_f$ is constant on a cocountable subset of $D(f)$, and thus of $X$,
we are in the first alternative of the lemma.
Otherwise there are
 pairwise disjoint sets
$\{1_{A_\xi}, 1_{A_\xi'}\}\subseteq X$ 
such that $L_f(1_{A_\xi})\not=L_f(1_{A_\xi'})$ for every $\xi<\mathfrak{c}$. Then the second
alternative of the lemma holds as $\lim_{n\in A_\xi\cup A_\xi'}f(n)$ does not exist.
\end{proof}

We are now ready to establish the main result:

\begin{theorem}[Existence]\label{existence-F} There is an uncountable, almost disjoint family \mbox{$\B\subseteq\sub$}
such that all bounded continuous functions $F:K_\B\rightarrow \K$ are of the form
$$F=\lambda 1_{K_\B}+ G,$$
 where $\lambda\in \K$ and $G$ is nonzero on at most countably
many points of $K_\B$. 
\end{theorem}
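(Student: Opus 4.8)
The plan is to build the family $\B=\{B_\xi : \xi<\cc\}$ by transfinite recursion of length~$\cc$, starting from a fixed uncountable, Borel almost disjoint family $\A_0\subseteq\sub$ (such a family exists --- this is the classical Mr\'owka/Cantor set construction, which the paper records as Lemma~\ref{borel-ad}), and then carving out of it a subfamily together with finitely many modifications at each step. Enumerate $\ell_\infty$ (or rather a suitable dense-enough set, but $\lvert\ell_\infty\rvert=\cc$ so we may simply enumerate all of it) as $\{f_\xi:\xi<\cc\}$. At stage~$\xi$ we want to \emph{decide} $f_\xi$: either throw away the relevant two-element sets so that the surviving family rejects $f_\xi$ (via a single member), or else certify that all but countably many surviving members admit $f_\xi-\lambda_{f_\xi}1_\N$. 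The point of working inside a Borel almost disjoint family is that Lemma~\ref{dichotomy-F} applies at every stage to tell us that one of these two outcomes is available, and crucially the rejecting outcome produces $\cc$-many pairwise disjoint witnessing pairs, so after fewer than $\cc$ stages we have used up fewer than $\cc$ of them and can still continue.

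Concretely, I would maintain during the recursion a decreasing chain of \emph{Borel} almost disjoint families $\A_0\supseteq\A_1\supseteq\cdots\supseteq\A_\eta\supseteq\cdots$ (for $\eta<\cc$), each of cardinality $\cc$, together with a set $B_\eta$ chosen for each $\eta<\xi$, so that the $B_\eta$'s are pairwise almost disjoint and $B_\eta\in\A_\eta\setminus\A_{\eta+1}$ or $B_\eta$ is a small modification of such a set. At a stage $\xi<\cc$: first apply Lemma~\ref{dichotomy-F} to $\A_{<\xi}:=\bigcap_{\eta<\xi}\A_\eta$ --- but since that intersection need not be Borel, it is cleaner to instead \emph{not} intersect but to argue that at every stage we still have a co-$(<\cc)$ subfamily of the \emph{original} Borel family $\A_0$ available, and apply the dichotomy to $\A_0$ itself once, at the very beginning, for each $f_\xi$. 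That is the real simplification: run Lemma~\ref{dichotomy-F} on $(\A_0,f_\xi)$ for each $\xi<\cc$ in advance. For those $f_\xi$ satisfying alternative~(1) we do nothing (the surviving family will automatically admit $f_\xi-\lambda_{f_\xi}1_\N$ off a countable set, provided we only ever remove $<\cc$ members, hence in particular only countably many... no, we may remove $\cc$ members, so care is needed here). For those $f_\xi$ satisfying alternative~(2) we have a family $\{A^\xi_\zeta\cup {A'}^\xi_\zeta:\zeta<\cc\}$ of sets each rejecting $f_\xi$, with the pairs $\{A^\xi_\zeta,{A'}^\xi_\zeta\}$ pairwise disjoint for fixed~$\xi$; we must select one $\zeta$ for which $A^\xi_\zeta$, ${A'}^\xi_\zeta$ have not yet been spoken for by earlier stages, put their union into $\B$, and record that we are no longer allowed to separate that pair.

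The main obstacle, and where the bookkeeping has to be done honestly, is the interaction between the two alternatives across the $\cc$ stages: each stage of type~(2) permanently ``uses up'' one of its $\cc$ candidate pairs, but a later stage $\xi'$ of type~(1) demands that \emph{cocountably many} members of $\B$ admit $f_{\xi'}-\lambda 1_\N$, whereas a stage of type~(2) has inserted into $\B$ a union $A\cup A'$ that may fail to admit $f_{\xi'}-\lambda 1_\N$. The resolution is the standard one for these Mr\'owka-style constructions: alternative~(1) is a property of \emph{all but countably many} members of $\A_0$, so it is inherited by every subfamily, and when we insert a new member $B_\xi=A\cup A'$ at a type-(2) stage we note that $A$ and $A'$ individually lie in $\A_0$ and hence, for each type-(1) function $f_{\xi'}$ with $\xi'<\xi$ or $\xi'>\xi$, both $A$ and $A'$ admit $f_{\xi'}-\lambda_{f_{\xi'}}1_\N$ \emph{unless} $A$ or $A'$ is one of the countably many exceptions for $f_{\xi'}$; by Monotonicity (Lemma~\ref{monotonicity}(1)) their union then admits $f_{\xi'}-\lambda_{f_{\xi'}}1_\N$ as well. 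So, when choosing the index $\zeta$ at a type-(2) stage, we must avoid not only the countably many... in fact at most $\lvert\xi\rvert+\omega<\cc$ many pairs already used, but also the countably many exceptional members for each of the $\lvert\xi\rvert$ previously processed type-(1) functions, again a union of $<\cc$ forbidden sets, so a good $\zeta$ exists. Having built $\B$ this way, it is an uncountable (indeed size-$\cc$) almost disjoint family, and by construction it satisfies the hypothesis of Lemma~\ref{reduction} --- every $f\in\ell_\infty$ equals some $f_\xi$, and either $\B$ rejects it (type-(2) stage, witnessed by the inserted union) or $\B\setminus\B_f$ admits $f-\lambda_f 1_\N$ for a countable $\B_f$ (type-(1) stage: the exceptional members, among those $B_\eta$ chosen at type-(1) stages, together with those finitely-or-countably many type-(2) insertions that happen to sit in the exceptional set). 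Lemma~\ref{reduction} then yields exactly the conclusion of Theorem~\ref{existence-F}. I expect the only genuinely delicate point to be confirming that $\B_f$ is \emph{countable} rather than merely of size $<\cc$; this forces us, at each type-(2) stage, to also dodge the exceptional sets of all \emph{later} type-(1) functions, which we cannot see yet --- the fix is to observe that a type-(2) insertion $A\cup A'$ lies in the exceptional set of a future $f_{\xi'}$ only if $A$ or $A'$ does, and since the exceptional set for each $f_{\xi'}$ is countable while our type-(2) insertions use pairwise disjoint pairs drawn from $\A_0$, for each $f_{\xi'}$ only countably many of our type-(2) members can be exceptional, giving $\B_f$ countable as required.
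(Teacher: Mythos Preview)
Your approach is essentially the paper's, and your final paragraph contains exactly the right idea: Monotonicity plus pairwise disjointness of the chosen pairs forces $\B_f$ countable for every type-(1) function~$f$, with no need to dodge anything during the recursion. The paper's proof is exactly this, stripped of your false starts: it enumerates \emph{only} the type-(2) functions, inserts one rejecting union~$B_\xi=A_\xi\cup A_\xi'$ at each stage (choosing a fresh disjoint pair from the $\mathfrak{c}$ available), and verifies the hypothesis of Lemma~\ref{reduction} in two lines.

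There is one genuine gap in your write-up: you assert that $\B$ is ``uncountable (indeed size-$\cc$)'' but never justify it. In your construction, $\B$ consists only of the type-(2) insertions, and nothing you have said rules out there being only countably many type-(2) functions (or none at all). The paper handles this explicitly: if no function is type-(2), take $\B=\A_0$; otherwise, enumerate the type-(2) functions \emph{with each repeated continuum many times}, so the recursion has length~$\mathfrak{c}$ and $\B$ automatically has size~$\mathfrak{c}$. You need one of these fixes (or the observation that the type-(2) functions, if nonempty, are closed under nonzero scalar multiplication and hence of size~$\mathfrak{c}$).

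A smaller point: your attempt to dodge, at each type-(2) stage, the exceptional sets of the \emph{previously processed} type-(1) functions is unnecessary --- as you yourself discover in the final sentence, the pairwise-disjointness of the chosen pairs already guarantees that at most countably many of your $B_\xi$ meet any fixed countable exceptional set $\A_f$, regardless of whether $f$ is processed before or after stage~$\xi$. Dropping that bookkeeping (and the abandoned decreasing-chain idea) brings your argument in line with the paper's.
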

\begin{proof}
\begin{enumerate}
\item Using Lemma \ref{borel-ad},  fix an uncountable,
 almost disjoint family $\A\subseteq\sub$ such that $\{ 1_A: A\in \A\}$ is a Borel subset of~$\can$.
\item Let $\{f_\xi: \xi<\mathfrak c\}$ be an enumeration of the set of
  all elements $f\in \ell_\infty$ for which there is no countable
  $\A_f\subseteq \A$ and no $\lambda_f\in \K$ such that
  $\A\setminus\A_f$ admits $f-\lambda_f 1_\N$. (If this set is empty, then
  Lemma~\ref{reduction} shows that $\B = \A$ already has the required property; if it is nonempty, but has cardinality less
  than~$\mathfrak{c}$, simply repeat each element continuum many times.) Then the
  second alternative of the dichotomy for admission and rejection
  (Lemma~\ref{dichotomy-F}) holds for each $f_\xi$ and $\A$.
\item By transfinite recursion on $\xi<\mathfrak c$,
 construct $A_\xi, A_\xi'\in \A$ and $B_\xi\subseteq \N$ such that
 \begin{enumerate}
 \item $\{A_\eta, A_\eta'\}\cap \{A_\xi, A_\xi'\}=\emptyset$ for all $\eta<\xi<\mathfrak c$,
 \item $B_\xi=A_\xi\cup A_\xi'$ rejects $f_\xi$.
 \end{enumerate}
\item Define $\B=\{B_\xi: \xi<\mathfrak c\}$.
\item Check the required properties of $\B$, i.e., that the hypothesis
  of Lemma \ref{reduction} holds.  Take $f\in \ell_\infty$. If
  $f=f_\xi$ for some $\xi<\mathfrak c$, then $B_\xi$ rejects
  $f_\xi$. Otherwise $\A\setminus\A_f$ admits $f-\lambda_f 1_\N$
  for some countable $\A_f\subseteq \A$ and some $\lambda_f\in \K$, and then
  the Monotonicity of Lemma \ref{monotonicity} and con\-di\-tion~(b) above imply that $\B\setminus \B'$ 
   admits $f-\lambda_f 1_\N$ for some countable $\B'\subseteq \B$.\qedhere
\end{enumerate}
\end{proof}

\subsection{Constructing $C_0(K_\A)$ with few operators}\label{few-operators-section}

Our main construction follows the stages of the construction from Subsection~\ref{few-multiplications-section}. Note that the lemmas below will be proved in the 
following sections and keep their numbering from those sections.

Recall that we used elements  of $\ell_\infty$  to represent bounded continuous functions on~$K_\B$. Here we shall use $\K$-valued $\N\times\N$ matrices to represent operators on~$C_0(K_\B)$.
\begin{definition}\label{matrices} Let
$M=(m_{k, n})_{k, n\in \N}$ be an $\N\times\N$ matrix whose entries belong to~$\K$. Then:
\begin{itemize}
\item $\M=\{M: \|M\|<\infty\}$, where $\|M\| = \sup\{\sum_{n\in \N}|m_{k,n}|: k\in \N\}$.
\item For $f\in \ell_\infty$, 
$Mf\in\ell_\infty$ is given by $(Mf)(k)=\sum_{n\in \N}m_{k, n}f(n)$ for $k\in \N$.
\item $I$ stands for the $\N\times\N$ matrix which has entries~$1$ on the diagonal and all remaining entries are $0$.
\end{itemize}
\end{definition}

\begin{definition}\label{admits-M} Suppose that $M\in \M$, $\A\subseteq\sub$ and $B\in\sub$. We say that:
\begin{itemize}
\item $\A$ \emph{admits} $M$  
 if $\lim_{k\in A'}(M1_A)(k)=0$ for every $A, A'\in \A$.
\item $B$ \emph{rejects} $M$ if 
  $\lim_{k\in B}(M1_B)(k)$ does not exist.
\item  $B$ \emph{undermines} $M$ if there is  $n\in \N$
such that the sequence $((M1_{\{n\}})(k))_{k\in B}$ does not converge to zero.
\end{itemize}
\end{definition}

The elements of $\M$ are not as well suited to represent bounded linear operators
on~$C_0(K_\B)$ as the elements of $\ell_\infty$ were to represent  bounded continuous functions on~$K_\B$. However,  they do represent operators to some extent. We analyze this si\-tu\-a\-tion in Section~\ref{scattered} where, in addition to some general results about operators on $C_0(K)$ for $K$ scattered,  we obtain
the following analogue of Lemma \ref{reduction}.
\vskip 10pt
\begin{replemma}[\ref{reduction2}]{\rm(Reduction Lemma)\textbf{.}}
  Suppose that  $\B\subseteq\sub$ is an almost disjoint family such that,  for every $M\in \M$, one of the following three conditions holds:
  \begin{enumerate}
    \item there are uncountably many $B\in\B$ which reject $M$,
\item or there are uncountably many $B\in\B$ that undermine $M$,
\item or there is $\lambda\in \K$ and 
a countable subset $\B'\subseteq \B$ such that $\B\setminus \B'$ admits $M-\lambda I$.
\end{enumerate}
Then all bounded linear operators $T: C_0(K_\B)\rightarrow C_0(K_\B)$ are of the form
$$T=\lambda \Id+S,$$
 where $\lambda\in \K$ and $S$  is an operator which factors through $c_0$.
\end{replemma}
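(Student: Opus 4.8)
The plan is to mimic the proof of Lemma~\ref{reduction}, but working with matrices and operators rather than elements of~$\ell_\infty$ and continuous functions. Let $T\colon C_0(K_\B)\to C_0(K_\B)$ be a bounded linear operator. Using the representation of $C_0(K_\B)$ as $\X_\B\subseteq\ell_\infty$ via the isometry $f\mapsto(f(x_n))_{n\in\N}$, and using the general theory of operators on $C_0(K)$ for scattered~$K$ developed in Section~\ref{scattered} (which should tell us how to encode the ``isolated-point part'' of~$T$ as a matrix), I would associate with~$T$ a matrix $M=M_T\in\M$ describing the action of~$T$ on characteristic functions of singletons, i.e. with $(M1_{\{n\}})(k)$ recording the value at~$x_k$ of $T$ applied to (a function representing)~$1_{\{x_n\}}$. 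Boundedness of~$T$ gives $\|M\|<\infty$, so $M\in\M$.

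Next I would rule out the first two alternatives of the hypothesis. If uncountably many $B\in\B$ reject~$M$, then for such a~$B$ the sequence $\bigl((M1_B)(k)\bigr)_{k\in B}$ does not converge; but $1_B$ represents (up to the $c_0$-ambiguity) a continuous function on~$K_\B$, namely $1_{U(B)}$, its image under~$T$ is continuous, and continuity at~$y_B$ together with $x_k\to y_B$ forces $\lim_{k\in B}(T1_{U(B)})(x_k)$ to exist --- a contradiction, once one checks that $(M1_B)(k)$ agrees with $(T1_{U(B)})(x_k)$ up to a null-at-infinity error coming from the off-diagonal block behaviour. Hence no $B$ rejects~$M$. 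Similarly, if $B$ undermines~$M$, then some column of~$M$ restricted to~$B$ fails to converge to zero, which contradicts the fact that $T1_{\{x_n\}}\in C_0(K_\B)$ vanishes at infinity (so in particular $\lim_{k\in B}(T1_{\{x_n\}})(x_k)=0$ since the $y_B$ are a closed discrete set on which $T1_{\{x_n\}}$ can be nonzero at only countably many points). So in fact no $B\in\B$ undermines~$M$. Therefore the third alternative holds: there is $\lambda\in\K$ and a countable $\B'\subseteq\B$ with $\B\setminus\B'$ admitting $M-\lambda I$.

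Now I would show $T=\lambda\Id+S$ with $S$ factoring through~$c_0$. Put $S=T-\lambda\Id$; its matrix is $M-\lambda I$, and $\B\setminus\B'$ admits $M-\lambda I$ means $\lim_{k\in A'}\bigl((M-\lambda I)1_A\bigr)(k)=0$ for all $A,A'\in\B\setminus\B'$. The goal is to deduce that $S$ has separable range, since by Lemma~\ref{jl-c0} separable range is equivalent to factoring through~$c_0$ for operators on spaces of the form~$C_0(K)$ with $K$ as here. The range of $S$ is spanned (densely) by $\{S1_{\{x_n\}}:n\in\N\}\cup\{S1_{U(B)}:B\in\B\}$ together with the countably many exceptional~$B\in\B'$; the first set is countable, and for $B\notin\B'$ the admission condition forces $S1_{U(B)}$ to vanish at all but countably many points $y_{B''}$, $B''\in\B$ --- but the exceptional set of such $B''$ could a priori depend on~$B$, so this needs care. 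I expect the main obstacle to be exactly this point: controlling the off-diagonal entries, i.e. showing that the ``local'' admission data $\lim_{k\in A'}((M-\lambda I)1_A)(k)=0$ genuinely forces the whole operator $S$ to have separable range, rather than merely controlling it on individual generators; this is the new phenomenon flagged in the introduction (the behaviour of a matrix on $A\cup B$ depends on the $(A\setminus B)\times(B\setminus A)$ and $(B\setminus A)\times(A\setminus B)$ blocks, not just on~$A$ and~$B$ separately), and presumably the general scattered-space results of Section~\ref{scattered}, together with the observation that the set $\{y_B:B\in\B\}$ is closed and discrete in~$K_\B$ so that any $g\in C_0(K_\B)$ is nonzero on only countably many~$y_B$, are precisely what is needed to push it through.
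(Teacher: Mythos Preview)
Your overall strategy matches the paper's, but there is a genuine gap in the middle step. You claim that \emph{no} $B\in\B$ rejects or undermines~$M$; this is false in general, and the correct statement --- which is all the hypothesis requires --- is that \emph{at most countably many} $B$ do so.

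The problem is the relation between $(M1_B)(k)$ and $(T1_{U(B)})(x_k)$. Expanding $T(1_{U(B)})(x_k)=\langle T^*(\delta_{x_k}),1_{U(B)}\rangle$ and recalling that $1_{U(B)}$ takes the value~$1$ at~$y_B$ as well as at the points $x_n$ for $n\in B$, you get
\[
  T(1_{U(B)})(x_k)=(M1_B)(k)+T^*(\delta_{x_k})(\{y_B\}).
\]
The error term $T^*(\delta_{x_k})(\{y_B\})$ is \emph{not} ``null at infinity'' in~$k$ for a fixed~$B$: there is no reason it should tend to zero along~$B$. What is true (this is Lemma~\ref{reduced-lemma}) is that each measure $T^*(\delta_{x_k})\in\ell_1(K_\B)$ has countable support, so the set $E=\{y\in K_\B': T^*(\delta_{x_k})(\{y\})\ne0\text{ for some }k\in\N\}$ is countable; for $B$ outside the countable set $\B_1=\{B:y_B\in E\}$ the error term vanishes \emph{identically} in~$k$, and only then does continuity of $T(1_{U(B)})$ at~$y_B$ forbid rejection. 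The undermining argument has the same defect: $\lim_{k\in B}(T1_{\{x_n\}})(x_k)=T(1_{\{x_n\}})(y_B)$ by continuity, and this value need not be zero. It \emph{is} zero for $B$ outside a countable set~$\B_2$, obtained by observing that $T$ sends the separable subspace spanned by the $1_{\{x_n\}}$ into a separable subspace and applying Lemma~\ref{separability-lemma}.

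Once these countable exceptional sets are tracked, your worry in the last step dissolves rather than persists: for $B\in\B\setminus(\B_1\cup\B')$ and \emph{any} $B''\in\B\setminus\B'$, the admission condition gives $\lim_{k\in B''}((M-\lambda I)1_B)(k)=0$, hence $S(1_{U(B)})(y_{B''})=0$. So the exceptional set of~$B''$'s is the \emph{uniform} countable set~$\B'$, not $B$-dependent, and $s(S[C_0(K_\B)])$ is contained in a countable set as required.
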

\vskip 10pt
Hence the rest of the efforts of the paper are focused on the construction of an almost
disjoint family which satisfies the hypothesis of Lemma \ref{reduction2}.
If we try to follow the ideas of Subsection~\ref{few-multiplications-section}
we quickly realize that the corresponding version of Lemma \ref{monotonicity}
fails badly for the simple reason that elements of~$\M$ 
need not  be monotone.
We do not know  how to overcome a number of problems stemming from this fact while working
with the above notion of admission.

Instead our approach is to consider a version of admission which we call
``acceptance'' that is sufficiently monotone and can be viewed as a kind of
``hereditary admission". This change, on the other hand,
complicates other parts of the construction. Our motivation comes from the theory 
of compact operators on~$c_0$ and~$\ell_\infty$ and is explained
 in more detail at the end of Section~\ref{matrices-section}.

 We introduce  the following notation and terminology for matrices:
\begin{definition}\label{compact-def} Suppose that $M=(m_{k, n})_{k, n\in \N}\in \M$.
\begin{itemize}
\item For $j\in\N$, set $M_j=(m_{k, n}')_{k, n\in \N}$, where $m_{k, n}'=0$ if $n\leq j$ and
$m_{k, n}'=m_{k, n}$ otherwise.
\item $M$ is called a  \emph{compact matrix} if $\lim_{j\in \N}\|M_j\|=0$.
\item For $A\in\sub$, define $M^A=(m_{k, n}')_{k, n\in \N}$, 
where $m_{k, n}'=m_{k, n}$ if $n, k\in A$ and $m_{k, n}'=0$ otherwise.
\item We say that $A\in\sub$ \emph{accepts} 
$M$ if $M^A$ is a compact matrix.
\item  We say that  $\A\subseteq\sub$
 \emph{accepts} $M$ if $A\cup A'$  accepts $M$ for every  $A, A'\in \A$.
\end{itemize}
\end{definition}

Section \ref{matrices-section} contains the discussion of the relevant operator theoretic aspects
of compact matrices and culminates in the proof of the following result:
\vskip 10pt
\begin{replemma}[\ref{acceptance}] Suppose that $M\in\M$ and that $\A\subseteq\sub$.
\begin{enumerate}
\item \emph{(Admission)} If $\A$  accepts $M$ and no element of $\A$  undermines $M$,
then $\A$ admits $M$.
\item \emph{(Monotonicity)} If $\A$  accepts $M$, then
$B$  accepts $M$ for every $B\in\sub$ which is included in a finite union
of elements of $\A$.
\item \emph{(Decidability)} If $A\in\sub$ does not  accept $M-\lambda I$ for any $\lambda\in \K$, 
then there is an infinite $B\subseteq A$ such that $B$ rejects $M$.
\item \emph{(Amalgamation)}
If,  for every $A, A'\in\A$, there is $\lambda_{A,A'}\in \K$ such that
$A\cup A'$  accepts $M-\lambda_{A,A'} I$, then there is $\lambda\in\K$
such that $\A$  accepts $M-\lambda I$.
\end{enumerate}
\end{replemma}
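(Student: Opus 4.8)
The plan is to prove the four parts in the order \emph{Amalgamation}, \emph{Monotonicity}, \emph{Admission}, \emph{Decidability}: the first three are soft consequences of two remarks about the tail norm $\|M_j\|$ of Definition~\ref{compact-def}, while \emph{Decidability} is where the real work lies. I would begin with: \textbf{(i)} whenever $B\subseteq C$ one has $\|N^B_j\|\le\|N^C_j\|$ for every $N\in\M$ and every $j$, since each row of $N^B$ arises from the corresponding row of $N^C$ by deleting entries while rows outside $B$ are zero; in particular acceptance of any matrix passes from $C$ to $B$. \textbf{(ii)} For infinite $A$ the matrix $I^A$ is not compact, because $\|(I^A)_j\|=1$ for all $j$; hence $\lambda I^A$ is compact only if $\lambda=0$. \emph{Amalgamation} is now short. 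For infinite $A$ there is \emph{at most one} scalar $\lambda$ with $A$ accepting $M-\lambda I$: if $A$ accepted both $M-\lambda I$ and $M-\mu I$, then the difference $(M-\mu I)^A-(M-\lambda I)^A=(\lambda-\mu)I^A$ of two compact matrices would be compact, forcing $\lambda=\mu$ by~(ii). Given the hypothesis of~(4), apply this to $A\cup A'$ to get $\lambda_{A,A'}$ with $A\cup A'$ accepting $M-\lambda_{A,A'}I$; by~(i), $A$ and $A'$ also accept $M-\lambda_{A,A'}I$, so $\lambda_{A,A'}$ depends neither on $A$ nor on $A'$. Writing $\lambda$ for this common value, $\A$ accepts $M-\lambda I$ by definition.

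For \emph{Monotonicity}, by~(i) it is enough to show that a finite union $C=A_1\cup\dots\cup A_r$ of members of $\A$ accepts $M$. Fix $j$ and $k\in C$, say $k\in A_i$. For each $\ell$, every entry $m_{k,n}$ with $n\in A_\ell$, $n>j$, lies in a row and column indexed by $A_i\cup A_\ell$, so $\sum_{n\in A_\ell,\,n>j}|m_{k,n}|\le\|M^{A_i\cup A_\ell}_j\|$; summing over $\ell$ and then taking the supremum over $k\in C$ yields $\|M^C_j\|\le\sum_{i',\ell=1}^r\|M^{A_{i'}\cup A_\ell}_j\|$, which tends to $0$ as $j\to\infty$ since $\A$ accepts $M$ and there are only finitely many pairs. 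For \emph{Admission}, fix $A,A'\in\A$ and put $C=A\cup A'$; then $C$ accepts $M$. Given $\varepsilon>0$, choose $j$ with $\|M^C_j\|<\varepsilon$, so $\bigl|\sum_{n\in A,\,n>j}m_{k,n}\bigr|<\varepsilon$ for every $k\in C$, whereas $\sum_{n\in A,\,n\le j}m_{k,n}$ is a finite sum of terms $m_{k,n}=(M1_{\{n\}})(k)$, each tending to $0$ as $k\to\infty$ along $A'$ because $A'$ does not undermine $M$. Hence $(M1_A)(k)\to0$ along $A'$, which is the assertion of \emph{Admission}.

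\emph{Decidability} is the crux. Suppose $A$ accepts $M-\lambda I$ for no $\lambda$. Write $r_k=(m_{k,n})_{n\in\N}$ for the $k$-th row, so $\sum_n|m_{k,n}|\le\|M\|$ and $(M1_B)(k)=\sum_{n\in B}m_{k,n}$; the hypothesis says that for every $\lambda$ the bounded subset $\{r_k-\lambda e_k:k\in A\}$ of $\ell_1$ fails to have equi-small tails, i.e.\ is not relatively norm-compact. I would split according to the diagonal $(m_{k,k})_{k\in A}$. If it converges to some $d$, then, since the diagonal matrix with entries $m_{k,k}-d$ is compact (its tail norm is $\sup_{k>j}|m_{k,k}-d|$), the case $\lambda=d$ already forces the \emph{off-diagonal} part of $M^A$ to be non-compact: there are $\theta>0$ and, for arbitrarily large $j$, a row index $k\in A$ with $\sum_{n\in A,\,n>j,\,n\ne k}|m_{k,n}|\ge\theta$. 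If it does not converge then, being bounded, it has two distinct cluster values $\alpha\ne\beta$. In either case I would construct $B=\{k_1<k_2<\cdots\}\subseteq A$ by recursion so that $\bigl((M1_B)(k)\bigr)_{k\in B}$ is not Cauchy: having chosen $k_1<\dots<k_i$, use $\sum_n|m_{k_i,n}|\le\|M\|$ to fix $N_i$ so large that the part of $(M1_B)(k_i)$ coming from $B\cap(N_i,\infty)$ is negligible (all later $k_\ell$ will be taken $\ge N_i$); this reduces the task to making the finite sum $\sum_{\ell\le i}m_{k_i,k_\ell}$ together with the columns inserted just after $k_i$ keep the sequence oscillating --- in the first case by inserting after $k_i$ a finite block of columns realising at least $\theta/4$ of the off-diagonal $\ell_1$-mass of row $k_i$ (after a pigeonhole on signs, resp.\ quadrants in the complex case, so that the \emph{signed} block sum has modulus of order $\theta$), and in the second case by choosing $m_{k_i,k_i}$ alternately near $\alpha$ and near $\beta$ while keeping the off-diagonal contributions below $|\alpha-\beta|/4$.

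The hard part, which I expect to be the technical heart of Section~\ref{matrices-section}, is controlling the interference between the already-chosen columns and the new row, i.e.\ the entries $m_{k_i,k_\ell}$ with $\ell<i$. If $A$ does not undermine $M$, then $\lim_{k\in A}m_{k,n}=0$ for each fixed $n$, so $k_i$ can simply be taken large enough that $\sum_{\ell<i}|m_{k_i,k_\ell}|$ is tiny and the recursion runs. When $A$ does undermine $M$ this is impossible and the argument must be reorganised: if $A$ undermines $M$ through a column $n_0$ along which $(m_{k,n_0})_{k\in A}$ has two distinct cluster values, then either some subset of $A$ already rejects $M$, or $\{n_0\}\cup B'$ rejects $M$ for a suitable $B'\subseteq A$ (a bounded-below oscillation contributed by the $n_0$-column cannot be cancelled by a convergent tail); and in the remaining case every column converges along $A$, so the rows $r_k$ converge coordinatewise to some $c\in\ell_1$ by Fatou but not in norm (because $\{r_k:k\in A\}$ is not relatively compact), whence one can extract disjoint finite coordinate windows each carrying a fixed positive share of the escaping mass $\|r_k-c\|_1$ and feed these into the recursion above. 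Performing this case analysis uniformly, while simultaneously keeping track of the ``future tail'', the ``lower-left'' interference, and the oscillation budget, is the main obstacle.
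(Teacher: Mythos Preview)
Your proofs of Amalgamation, Monotonicity and Admission are correct and match the paper's; the paper packages your remark~(ii) as Lemma~\ref{wc-diagonal}(3), but the argument is the same.

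For Decidability your plan is workable but organised differently, and your split is what generates the case analysis you yourself call ``the main obstacle''. The paper splits not on whether the diagonal $(m_{k,k})_{k\in A}$ converges but on whether the \emph{off-diagonal} matrix $M^A-D(M^A)$ is compact. This buys a lot: when $M^A-D(M^A)$ \emph{is} compact, the diagonal cannot converge (else $A$ would accept $M-dI$), and one simply picks infinite $A_1,A_2\subseteq A$ along which the diagonal tends to distinct limits and sets $B=(A_1\cup A_2)\setminus\{0,\dots,j\}$ for $j$ with $\|(M^A-D(M^A))_j\|<|\lambda_1-\lambda_2|/3$; no recursion and no column-convergence issues. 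When $M^A-D(M^A)$ is \emph{not} compact, the paper delegates to Lemma~\ref{half-killing}, which produces an infinite coinfinite $B'\subseteq A$ with $\lim_{k\in A\setminus B'}(M^A1_{B'})(k)$ nonexistent, and then either $A\setminus B'$ or $A$ itself rejects~$M$. Your recursion is essentially an inline version of the proof of that lemma, including its reduction (subtracting the columnwise limits to land in~$\ell_1$) to the case where all columns vanish, and its trick of adjoining the single troublesome column~$n_0$ when one column fails to converge. So both approaches rest on the same engine; the paper's split simply avoids running it in your Case~B. One small imprecision to fix: the condition you should branch on is whether the columns of $M^A$ indexed by $n\in A$ vanish along~$A$, not whether ``$A$ undermines $M$'', since the latter may be witnessed by some $n_0\notin A$, in which case $\{n_0\}\cup B'\not\subseteq A$ and the argument in your last paragraph no longer yields $B\subseteq A$.
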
 
\vskip 10pt

In Section \ref{borel-section} we embark on developing some tools which will enable us to take advantage
of the Borel structure of~$2^\N$, beginning with the following lemma. 
\vskip 10pt
\begin{replemma}[\ref{borel-ad}] There is an uncountable,
almost disjoint family $\A\subseteq\sub$ such that the set $\{1_A: A\in
\A\}\subseteq \can$ is closed.
\end{replemma}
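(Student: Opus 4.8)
The plan is to produce the required family explicitly as a continuous, pairwise-disjoint family of infinite branches in the Cantor tree $\seq$, and then to verify that the associated set of characteristic functions is closed in $\can$. Concretely, I would identify $\N$ with $\seq$ via a fixed bijection, and for each $x\in\can$ let $B_x=\{x|n : n\in\N\}\subseteq\seq$ be the set of initial segments of $x$; under the bijection this is an infinite subset of $\N$. The family $\A=\{B_x : x\in\can\}$ is the classical Mr\'owka-type almost disjoint family: if $x\neq y$, then $x$ and $y$ agree only on a finite initial segment, so $B_x\cap B_y$ is finite, and $\A$ is plainly uncountable (indeed of cardinality $\cc$).

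The substantive point is that $\{1_{B_x} : x\in\can\}$ is a closed subset of $\can$ (recall $1_{B_x}$ is viewed inside $\can$ via the bijection $\N\cong\seq$). First I would check that the map $x\mapsto 1_{B_x}$, from $\can$ to $\can$, is continuous: whether a fixed node $s\in\seq$ belongs to $B_x$ depends only on the coordinates $x|{\le |s|}$, since $s\in B_x$ iff $s=x||s|$, so each coordinate of $1_{B_x}$ is a continuous (in fact locally constant) function of $x$. Since $\can$ is compact and the map is continuous and injective, its image $\{1_{B_x}:x\in\can\}$ is compact, hence closed, in $\can$.

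Alternatively — and this is perhaps the cleaner route to write down — I would characterize the image directly and show it is closed by a limit argument: a point $z\in\can$ (thought of as $1_S$ for $S\subseteq\seq$) lies in the image exactly when $S$ is the branch of some $x\in\can$, equivalently when $S$ contains the empty node, is linearly ordered by the initial-segment relation $\subseteq$, and is infinite (equivalently downward closed and a chain meeting every level). Each of these is a closed condition: ``$\emptyset\in S$'' is clopen; ``$S$ is a $\subseteq$-chain'' is $\bigcap_{s\perp t}\{z : \lnot(z(s)=1\wedge z(t)=1)\}$, an intersection of clopen sets hence closed; ``$S$ meets level $n$'' is clopen, and ``$S$ is downward closed'' is likewise an intersection of clopen conditions. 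Intersecting these closed sets gives a closed set, and one checks it equals $\{1_{B_x}:x\in\can\}$, with the branch $x$ recovered from $S$ as the unique element of $\can$ all of whose initial segments lie in $S$.

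The only mild obstacle is bookkeeping: one must fix the bijection $\N\cong\seq$ once and for all and keep straight the two roles of $\can$ (as the space of branches indexing $\A$, and as the ambient space in which the characteristic functions live, indexed by $\seq\cong\N$). Neither role causes any real difficulty; the perfect-tree structure of $\seq$ does all the work, and the closedness is essentially the statement that the space of branches through a finitely branching tree is compact. I would therefore write the proof in the compactness form (continuous injective image of $\can$), which is shortest, and remark that it also shows $\{1_A:A\in\A\}$ is homeomorphic to $\can$.
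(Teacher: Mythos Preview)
Your construction is exactly the paper's, and both of your arguments for closedness are correct. Your second route---characterising the image as the set of $S\subseteq\seq$ that contain the root, form a $\subseteq$-chain, and meet every level---is essentially what the paper does: it shows the complement is open by noting that any $S$ \emph{not} of the form $A_p$ must either contain two incomparable nodes, fail downward closure, or miss some level, and each of these defects is witnessed by a clopen neighbourhood. Your first route, via the continuous injection $x\mapsto 1_{B_x}$ from the compact space $\can$, is a genuinely slicker alternative that the paper does not use; it avoids the case analysis entirely and, as you note, yields the extra information that $\{1_A:A\in\A\}$ is homeomorphic to~$\can$. Either version would be perfectly acceptable here.
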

\vskip 10pt
Dealing with operators and matrices brings us into the noncommutative world, where we must consider pairs of indices $A, A'\in \A$ (see, e.g.,
Definitions \ref{admits-M} and~\ref{compact-def}),  rather than single indices as in
Subsection~\ref{few-multiplications-section}. To handle this situation, we require a Borel dichotomy for pairs.
\vskip 10pt
\begin{replemma}[\ref{van-engelen}]{\rm (A dichotomy for Borel sets in the square)}
Suppose that $X\subseteq\can\times\can$ is a Borel set. Then 
one of the following  conditions holds:
\begin{enumerate}
\item either there is
a countable $Y\subseteq\can$ such that each point of $X$ 
has at least one of its coordinates in $Y$,
\item or there is $Z\subseteq X$ of cardinality continuum such that for any distinct 
points $(p, q), (p', q')\in Z$, we have $\{p, q\}\cap\{p', q'\}=\emptyset$.
\end{enumerate}
\end{replemma}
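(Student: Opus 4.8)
The statement to be proved (Lemma~\ref{van-engelen}) is a dichotomy for Borel subsets of the square $\can\times\can$: either one coordinate is ``trapped'' in a countable set, or there is a perfect-size pairwise disjoint selection. The natural strategy is to reduce to the classical perfect set property for Borel sets. First I would consider the two coordinate projections $\pi_0, \pi_1\colon X\to\can$. The rough idea is: if the failure of alternative~(1) forces $X$ to be ``large'' enough that one can run a fusion/tree argument to build the disjoint family in~(2). More precisely, I would look at which points $p\in\can$ have an uncountable vertical fibre $X_p=\{q:(p,q)\in X\}$, and symmetrically which $q$ have an uncountable horizontal fibre. Call $V$ the set of such $p$ and $H$ the set of such $q$; both are analytic (in fact Borel can be arranged by the Luzin--Novikov theorem, since a Borel set with all vertical sections countable is a countable union of Borel graphs).

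\textbf{Key steps.} The case division would be: (a) If both $V$ and $H$ are countable, then $X\setminus(\pi_0^{-1}[V]\cup\pi_1^{-1}[H])$ has all vertical and all horizontal sections countable; if this remainder is still uncountable, it is a countable union of Borel partial graphs (both ways), and a short combinatorial argument picks out an uncountable — hence, by Lemma~\ref{borel-ch} applied to a Borel piece, continuum-sized — ``partial matching'' inside it, from which one extracts by transfinite recursion a pairwise disjoint $Z$ of size~$\cc$; if the remainder is countable, then $Y=V\cup H\cup(\text{the countably many coordinates appearing in the remainder})$ witnesses alternative~(1). (b) If, say, $V$ is uncountable, pick a perfect $P\subseteq V$ (perfect set property for the analytic/Borel set $V$) and, using a measurable selector (Jankov--von Neumann / Luzin--Novikov) choose for each $p\in P$ a point $s(p)\in X_p$; refining $P$ to a further perfect subset on which $p\mapsto s(p)$ is continuous and injective, and then thinning so that the first coordinates $p$ and the chosen second coordinates $s(p)$ are all distinct across the family, yields $Z=\{(p,s(p)):p\in P'\}$ of size~$\cc$ with the disjointness in~(2). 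The ``distinctness'' thinning is routine: a continuous injective map on a perfect set, intersected with the diagonal-type bad sets, still leaves a perfect set since each bad condition is closed with countable... — more carefully, one removes a meagre/small set and reselects.

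\textbf{Main obstacle.} The delicate point is the genuinely two-dimensional case where neither $V$ nor $H$ is uncountable yet alternative~(1) fails — i.e., $X$ has all sections countable (both directions) but cannot be covered by countably many vertical-or-horizontal lines. Here one must actually invoke the van Engelen--Kunen--Miller extraction principle (this is precisely why the authors cite it): a Borel set in the plane with all sections countable, not trappable by countably many lines, contains a ``large'' partial injection in a usable sense. I expect the cleanest route is: write such an $X$ as $\bigcup_n G_n$ with each $G_n$ a Borel graph of a partial function (Luzin--Novikov), note that if every $G_n$ had countable domain we could trap $X$, so some $G_n$ has uncountable domain; then $G_n$ is the graph of a Borel injection on a perfect set (further applying Luzin--Novikov in the other coordinate to split $G_n$), and one finishes as in case~(b). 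The real work is checking that the resulting perfect families can be thinned to be pairwise disjoint — that one can, along a perfect set, simultaneously keep the first coordinates distinct from each other, the second coordinates distinct from each other, and each first coordinate distinct from every second coordinate; this is where a Mycielski-type independent-perfect-set argument, or a direct transfinite recursion of length~$\cc$ using Lemma~\ref{borel-ch} at each step to know the relevant ``bad'' sets are small, does the job.
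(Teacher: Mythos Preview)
Your approach is essentially correct but takes a genuinely different route from the paper's. The paper embeds $\can$ in $\R$ and argues with \emph{lines}: either $X$ is covered by countably many lines (horizontal and vertical ones witness alternative~(1); any sloping line meeting $X$ uncountably already yields~(2) by an easy recursion), or it is not, and then the van Engelen--Kunen--Miller theorem supplies a perfect $P\subseteq X$ with no three points collinear, from which~(2) follows by another short recursion. Your route via sections and Luzin--Novikov avoids that theorem entirely: two applications of Luzin--Novikov to the Borel remainder~$X'$ (with countable sections in both directions) yield a Borel partial injection with uncountable Borel domain, and a transfinite recursion of length~$\cc$ finishes. This is arguably more elementary, since Luzin--Novikov is a standard tool while the van Engelen--Kunen--Miller result is rather specialized; so you are mistaken when you write that ``one must actually invoke'' the latter---your own outline shows one need not. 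Two small corrections: the set $V$ of points with uncountable vertical fibre is in general only analytic, not Borel, but this is harmless since analytic sets have the perfect set property and in case~(a) countability of $V,H$ makes them trivially Borel; and your ``refine $P$ so the selector $s$ is injective'' step in case~(b) does not work as written (a measurable selector could be constant on a perfect set), though the direct transfinite recursion you also mention---choosing both coordinates freshly at each stage, using $|X_p|=\cc$ for $p\in V$---does the job.
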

\vskip 10pt
\begin{replemma}[\ref{matrices-borel}] Let $M\in \M$. Then
$$E(M)=\{(1_A, 1_{A'})\in \can\times \can: A\cup A' \
\hbox{accepts} \ M-\lambda I \ \hbox{for some}\ \lambda\in\K\}$$
is a Borel subset of $\can\times\can$. 
\end{replemma}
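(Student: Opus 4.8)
The statement I want to prove is that $E(M)$ is Borel in $\can\times\can$. The whole structure of the definition is ``there exists $\lambda\in\K$ such that $A\cup A'$ accepts $M-\lambda I$'', which unwinds, via Definition~\ref{compact-def}, to ``$(M-\lambda I)^{A\cup A'}$ is a compact matrix'', i.e.\ $\lim_{j\in\N}\bigl\|\bigl((M-\lambda I)^{A\cup A'}\bigr)_j\bigr\|=0$. So the plan is to express this condition through a countable sequence of Boolean combinations of conditions that are manifestly Borel in the variables $(1_A,1_{A'})$, and handle the existential quantifier over $\lambda\in\K$ by reducing it to a quantifier over a countable dense set.

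First I would fix $M=(m_{k,n})_{k,n}\in\M$ and, for a pair $(1_A,1_{A'})$, write $C=A\cup A'$. The key observation is that the map $(1_A,1_{A'})\mapsto 1_C$ from $\can\times\can$ to $\can$ is continuous (indeed $1_C(n)=\max\{1_A(n),1_{A'}(n)\}$), so it suffices to show that for a single variable $1_C\in\can$ the set $\{1_C: C\text{ accepts }M-\lambda I\text{ for some }\lambda\}$ is Borel, and then pull back. Next, for a fixed $\lambda$ and fixed $j$, the norm $\bigl\|\bigl((M-\lambda I)^{C}\bigr)_j\bigr\| = \sup_{k\in C}\sum_{n\in C,\, n>j}\lvert (m_{k,n}-\lambda\delta_{k,n})\rvert$ is a countable supremum, over $k$, of sums indexed by $n$; each summand depends on whether $k\in C$ and $n\in C$, which are clopen conditions on $1_C$, and the partial sums $\sum_{n\le N}$ are continuous functions of $1_C$ for each $N$, with the full sum their pointwise (monotone) limit. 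Hence $1_C\mapsto \bigl\|\bigl((M-\lambda I)^{C}\bigr)_j\bigr\|$ is Borel measurable (a countable sup of increasing limits of continuous functions), and therefore so is the condition $\lim_{j}\bigl\|\bigl((M-\lambda I)^{C}\bigr)_j\bigr\|=0$, which can be written as $\bigcap_{m}\bigcup_{j}\bigl\{1_C : \bigl\|\bigl((M-\lambda I)^{C}\bigr)_j\bigr\|<1/m\bigr\}$, a Borel set.

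It remains to deal with the existential quantifier over $\lambda\in\K$. Here the idea is that the diagonal correction only matters on the diagonal, and one should show: if $C$ accepts $M-\lambda I$ for some $\lambda$, then either $C$ is finite (impossible, $C\in\sub$) or $\lambda$ is forced to be a specific value, namely $\lambda=\lim_{k\in C}m_{k,k}$ in an appropriate sense — because compactness of $(M-\lambda I)^C$ forces the tail diagonal entries $m_{k,k}-\lambda$ to tend to $0$ along $C$. More precisely, if $C$ accepts $M-\lambda I$ and also $M-\lambda' I$ with $\lambda\ne\lambda'$, subtracting shows $(\lambda-\lambda')I^C$ is compact, which is false for infinite $C$; so $\lambda$ is unique when it exists. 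Consequently $E(M)$, restricted to pairs with $A\cup A'$ infinite, is the set where the (Borel, partially defined) function $(1_A,1_{A'})\mapsto \lambda(1_C):=\lim_{k\in C} m_{k,k}$ is well-defined and $C$ accepts $M-\lambda(1_C)I$. I would make this rigorous by first showing $\{1_C: \lim_{k\in C}m_{k,k}\text{ exists}\}$ is Borel and $1_C\mapsto\lim_{k\in C}m_{k,k}$ is Borel on it (this is essentially Lemma~\ref{borel-function} applied to the diagonal sequence $(m_{k,k})_k\in\ell_\infty$), and then noting that $\{(1_C,\lambda): 1_C\mapsto\text{accepts }M-\lambda I\}$ is Borel in $\can\times\K$ by the previous paragraph, so substituting the Borel function $\lambda(1_C)$ gives a Borel set. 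Pulling back along $(1_A,1_{A'})\mapsto 1_{A\cup A'}$ finishes the argument.

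\textbf{Main obstacle.} The routine part is the measurability bookkeeping in the middle paragraph. The genuine content, and the step I expect to require the most care, is the reduction of the $\exists\lambda$ quantifier: one must argue that acceptance of $M-\lambda I$ pins down $\lambda$ uniquely (via the observation that $cI^C$ is never a compact matrix for $c\ne 0$ and $C$ infinite) and that the resulting candidate $\lambda=\lambda(1_C)$ is a Borel function of the pair. Handling the edge case $A\cup A'$ finite is vacuous here since $\A\subseteq\sub$, but if one wants the statement for all of $\can\times\can$ one should note those pairs form a Borel (in fact $F_\sigma$) set on which every $\lambda$ works, so they are harmlessly included. Once uniqueness of $\lambda$ is in hand, composing Borel maps is standard and the proof is essentially complete.
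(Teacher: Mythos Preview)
Your proposal is correct and uses essentially the same ideas as the paper's proof: the continuous pullback along $(1_A,1_{A'})\mapsto 1_{A\cup A'}$, the Borelness of the acceptance condition for fixed~$\lambda$ (this is Lemma~\ref{borel-c}), and the observation that~$\lambda$ is uniquely determined as $\lim_{k\in C}m_{k,k}$ (Lemma~\ref{wc-diagonal}). The paper organizes the last step slightly more cleanly by splitting $M-\lambda I=(M-D(M))+(D(M)-\lambda I)$ upfront via Lemma~\ref{wc-diagonal}(1), so that the off-diagonal part is independent of~$\lambda$ and the diagonal part reduces to convergence of $(m_{k,k})_{k\in C}$; this yields directly $E(M)=\phi^{-1}[C(M-D(M))]\cap\phi^{-1}[\operatorname{Conv}(M)]$ and avoids your Borel-substitution step through $\can\times\K$, but the content is the same.
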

\vskip 10pt
The above two lemmas and Lemma \ref{acceptance} allow us to prove the following
analogue of Lemma \ref{dichotomy-F}:
\vskip 10pt
\begin{replemma}[\ref{dichotomy-M}]{\rm (A dichotomy for acceptance and rejection)} 
Suppose that $M\in \M$ and
$\A\subseteq\sub$ 
is an uncountable, almost disjoint family
such that $\{1_A: A\in \A\}$ is  a Borel subset of $\can$.
Then one of the following holds:
\begin{enumerate}
\item either
$\A\setminus \A'$  accepts $M-\lambda I$ for some countable $\A'\subseteq\A$ and $\lambda\in \K$,
\item or 
there are pairwise disjoint sets $\{A_\xi, A_\xi'\}\subseteq \A$ and an infinite
subset $B_\xi\subseteq A_\xi\cup A_\xi'$ such that
$B_\xi$ rejects $M$  for every $\xi<\mathfrak c$.
\end{enumerate}
\end{replemma}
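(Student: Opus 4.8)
The statement to prove is Lemma~\ref{dichotomy-M}, and its role and shape are precisely parallel to Lemma~\ref{dichotomy-F}: given $M\in\M$ and an uncountable, Borel-coded almost disjoint family $\A$, we must split into the ``small deviation'' case (acceptance of $M-\lambda I$ off a countable set) or the ``continuum-sized witness'' case (continuum many disjoint pairs whose union contains an infinite rejecting set). The plan is to run the argument of Lemma~\ref{dichotomy-F} in the noncommutative, square-indexed setting, substituting acceptance for admission, feeding the Borel set $E(M)$ from Lemma~\ref{matrices-borel} into the dichotomy for Borel subsets of the square (Lemma~\ref{van-engelen}) in place of Lemma~\ref{Lemma11}, and using Lemma~\ref{acceptance} — especially its Decidability clause — to convert failure of acceptance into rejection.

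\medskip
First I would set $X=\{1_A:A\in\A\}\subseteq\can$, which is Borel and, by Lemma~\ref{borel-ch}, of cardinality~$\cc$, and consider the Borel set $E(M)\subseteq\can\times\can$ supplied by Lemma~\ref{matrices-borel}. Apply Lemma~\ref{van-engelen} to the Borel set $E(M)\cap(X\times X)$. In the second alternative of Lemma~\ref{van-engelen} there is $Z\subseteq E(M)\cap(X\times X)$ of cardinality~$\cc$ consisting of pairs $(1_{A_\xi},1_{A_\xi'})$, pairwise disjoint as $2$\nobreakdash-element subsets of~$X$; but membership in $E(M)$ says $A_\xi\cup A_\xi'$ accepts $M-\lambda_\xi I$ for \emph{some} $\lambda_\xi$, which is not yet uniformity in $\lambda$. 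Here I would further partition $Z$ by the value $\lambda_\xi$ (or rather use that for each pair the relevant scalar, if it exists, is unique — this should follow because two distinct scalars would give two compact matrices differing by a noncompact one on $A_\xi\cup A_\xi'$, contradicting that a set of size $\geq2$ cannot accept $(\lambda-\lambda')I$; I would verify exactly this small fact), and since $\K$ is separable while $|Z|=\cc$, some value $\lambda$ is attained on a subset $Z_\lambda\subseteq Z$ still of size~$\cc$. Then $\A$ restricted to the indices occurring in $Z_\lambda$ has the property that every pair from it accepts $M-\lambda I$, so by Amalgamation (Lemma~\ref{acceptance}(4)) that entire subfamily accepts $M-\lambda I$; whether this already puts us in alternative~(1) depends on whether the complement is countable — so I would actually argue more carefully, as in Lemma~\ref{dichotomy-F}, by looking at the ``$\lambda$-good'' part of $X$ directly rather than only inside $Z$.

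\medskip
The cleaner route, mirroring Lemma~\ref{dichotomy-F} more faithfully, is: consider $X\setminus\pi[E(M)]$ in a suitable sense, or better, look at whether the set of pairs in $X\times X$ \emph{not} in $E(M)$ can be covered by a countable $Y$ hitting one coordinate of each (alternative~(1) of Lemma~\ref{van-engelen} applied to $(X\times X)\setminus E(M)$, which is Borel). If it can, then off a countable subfamily $\A'$ (those $A$ with $1_A\in Y$), every pair $\{A,A'\}$ from $\A\setminus\A'$ lies in $E(M)$, i.e.\ accepts $M-\lambda_{A,A'}I$ for some scalar; Amalgamation then gives a single $\lambda$ with $\A\setminus\A'$ accepting $M-\lambda I$, which is alternative~(1) of the lemma. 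If instead $(X\times X)\setminus E(M)$ falls into alternative~(2) of Lemma~\ref{van-engelen}, we get $\cc$ many pairwise-disjoint pairs $\{1_{A_\xi},1_{A_\xi'}\}$ with $A_\xi\cup A_\xi'$ not accepting $M-\lambda I$ for any $\lambda$; by Decidability (Lemma~\ref{acceptance}(3)) each such $A_\xi\cup A_\xi'$ contains an infinite $B_\xi$ that rejects $M$, which is exactly alternative~(2) of the lemma.

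\medskip
\textbf{Main obstacle.} The delicate point is the bridge between the raw output of Lemma~\ref{van-engelen} and the quantifier structure of the lemma's conclusion: Lemma~\ref{van-engelen}'s first alternative only guarantees a countable coordinate-hitting set $Y$, not a countable \emph{subfamily} of $\A$ whose removal fixes everything, and it is defined in terms of $(X\times X)\setminus E(M)$, so translating ``every remaining pair accepts $M-\lambda_{A,A'}I$ for some private $\lambda_{A,A'}$'' into ``$\A\setminus\A'$ accepts $M-\lambda I$ for one global $\lambda$'' is precisely where Amalgamation (Lemma~\ref{acceptance}(4)) must be invoked, and one has to check its hypothesis is met — namely that $\A\setminus\A'$ is itself an almost disjoint family (it is, being a subfamily) and that pairwise acceptance with varying scalars is genuinely what Lemma~\ref{van-engelen}(1) delivers. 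A secondary subtlety is the Borel measurability bookkeeping: one needs $(X\times X)\setminus E(M)$ Borel, which is immediate from Lemma~\ref{matrices-borel} and the fact that $X$ is Borel, but one should note that a $2$\nobreakdash-element subset $\{1_{A_\xi},1_{A_\xi'}\}$ being pairwise disjoint across $\xi$ means the underlying sets $A_\xi, A_\xi'\in\A$ are all distinct, so the pairs $\{A_\xi,A_\xi'\}$ in the conclusion really are pairwise disjoint as required. Once these points are handled, the proof is a short assembly of the cited lemmas.
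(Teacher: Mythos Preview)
Your ``cleaner route'' is correct and is exactly the paper's argument: apply Lemma~\ref{van-engelen} to the Borel set $(X\times X)\setminus E(M)$, use Amalgamation in the first alternative to pass from pairwise acceptance with private scalars to a single global~$\lambda$, and use Decidability in the second alternative to extract the rejecting sets~$B_\xi$. The initial exploration (applying Lemma~\ref{van-engelen} to $E(M)\cap(X\times X)$ and then trying to pigeonhole on~$\lambda$) is a detour you rightly abandon; you may simply delete it.
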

\vskip 10pt
This, in particular, means that in the construction of the almost disjoint family satisfying
the hypothesis of the Reduction Lemma \ref{reduction2}, we need to handle not only unions
of pairs as in the proof of Theorem \ref{existence-F}, but also their subsets.
Fortunately the Monotonicity of Lemma \ref{acceptance} shows that our notion of  acceptance  allows us to do that, emphasizing how acceptance is a kind of ``hereditary admission".

We are now ready to show how our main result can be deduced from the above lemmas.\vskip 10pt
\begin{reptheorem}[\ref{main-theorem}] 
 There is an uncountable, almost disjoint family
$\B\subseteq\sub$ such that every bounded linear operator $T\colon C_0(K_\B)\to C_0(K_\B)$ has the form
$T=\lambda \Id+S$, where $\lambda\in\K$  and $S\colon C_0(K_\B)\to C_0(K_\B)$  factors through
$c_0$. 
\end{reptheorem}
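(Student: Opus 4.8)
The plan is to follow the template of the proof of Theorem~\ref{existence-F}, translating it into the noncommutative language: elements of $\ell_\infty$ become matrices in $\M$, admission becomes acceptance, Lemma~\ref{reduction} becomes the Reduction Lemma~\ref{reduction2}, Lemma~\ref{dichotomy-F} becomes the dichotomy for acceptance and rejection (Lemma~\ref{dichotomy-M}), and the Monotonicity and Decidability of Lemma~\ref{monotonicity} become the Admission and Monotonicity parts of Lemma~\ref{acceptance}. First I would use Lemma~\ref{borel-ad} to fix an uncountable, almost disjoint family $\A\subseteq\sub$ with $\{1_A:A\in\A\}$ Borel (indeed closed) in $\can$. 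Call a matrix $M\in\M$ \emph{bad} if there is no countable $\A_M\subseteq\A$ and no $\lambda\in\K$ such that $\A\setminus\A_M$ accepts $M-\lambda I$. If there are no bad matrices, I would simply take $\B=\A$; otherwise I would enumerate the bad matrices, repeating each of them continuum many times, as $\{M_\xi:\xi<\cc\}$. For a bad $M_\xi$ the first alternative of Lemma~\ref{dichotomy-M} fails, so its second alternative supplies continuum many pairwise disjoint pairs $\{A,A'\}\subseteq\A$, each equipped with an infinite subset $B\subseteq A\cup A'$ that rejects $M_\xi$.

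Next I would run a transfinite recursion of length~$\cc$ choosing, at stage $\xi$, sets $A_\xi,A_\xi'\in\A$ and an infinite $B_\xi\subseteq A_\xi\cup A_\xi'$ such that $B_\xi$ rejects $M_\xi$ and $\{A_\eta,A_\eta'\}\cap\{A_\xi,A_\xi'\}=\emptyset$ for all $\eta<\xi$. This recursion can always be continued: at stage $\xi$ fewer than $\cc$ elements of~$\A$ have been used so far, each of them lying in at most one of the continuum many pairwise disjoint pairs furnished by the second alternative of Lemma~\ref{dichotomy-M} for $M_\xi$, so at least one such pair (with its associated infinite subset) remains available. I would then set $\B=\{B_\xi:\xi<\cc\}$. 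Since the members of~$\A$ occurring in distinct pairs are distinct and~$\A$ is almost disjoint, each intersection $B_\xi\cap B_\eta$ with $\xi\neq\eta$ is finite; in particular the $B_\xi$ are pairwise distinct, so $\B$ is an uncountable, almost disjoint family of infinite subsets of~$\N$.

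It then remains to check that $\B$ satisfies the hypothesis of the Reduction Lemma~\ref{reduction2}, after which that lemma delivers the conclusion. Fix $M\in\M$. If $M$ is bad, then $M=M_\xi$ for continuum many~$\xi$, and the corresponding $B_\xi$ are pairwise distinct and each rejects $M$, so uncountably many members of~$\B$ reject $M$ and alternative~(1) of Lemma~\ref{reduction2} holds. If $M$ is not bad, fix a witnessing countable $\A_M\subseteq\A$ and $\lambda\in\K$ with $\A\setminus\A_M$ accepting $M-\lambda I$, and put $\B'=\{B_\xi:\{A_\xi,A_\xi'\}\cap\A_M\neq\emptyset\}$, which is countable because each element of~$\A$ belongs to at most one of the chosen pairs. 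For any $B_\xi,B_\eta\in\B\setminus\B'$ the union $B_\xi\cup B_\eta$ is contained in the union of the four elements $A_\xi,A_\xi',A_\eta,A_\eta'$ of~$\A\setminus\A_M$, so the Monotonicity part of Lemma~\ref{acceptance} shows that $B_\xi\cup B_\eta$ accepts $M-\lambda I$; that is, $\B\setminus\B'$ accepts $M-\lambda I$. Now there are two possibilities. If uncountably many members of~$\B$ undermine~$M$, then alternative~(2) holds. Otherwise only countably many do; enlarging $\B'$ by these countably many sets to a countable $\B''$, the family $\B\setminus\B''$ still accepts $M-\lambda I$ and no member of it undermines~$M$, hence none undermines $M-\lambda I$ either (changing the diagonal of a matrix alters each of its columns in at most one entry and so does not affect undermining), and therefore the Admission part of Lemma~\ref{acceptance} yields that $\B\setminus\B''$ admits $M-\lambda I$, which is alternative~(3). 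In the degenerate case $\B=\A$ the same argument goes through with $\A_M$ playing the role of~$\B'$.

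Given the quoted lemmas, this deduction is essentially routine; the only subtlety is the three-way bookkeeping in the last step, where one has to discard not just the countably many indices attached to the admission witness $\A_M$, but also --- depending on the dichotomy built into Lemma~\ref{reduction2} --- either declare alternative~(2) outright or strip off the countably many undermining sets before appealing to the Admission part of Lemma~\ref{acceptance}. The genuine difficulty of the theorem lives entirely in the preceding sections, namely in establishing Lemmas~\ref{acceptance}, \ref{reduction2} and~\ref{dichotomy-M}: the failure of monotonicity for the naive notion of admission forces the switch to acceptance (``hereditary admission''), which in turn makes it necessary to cope with infinite subsets of unions of pairs, and the passage from single indices to pairs of indices requires the Borel dichotomy for subsets of the square (Lemma~\ref{van-engelen}) together with the Borel measurability of the sets $E(M)$ (Lemma~\ref{matrices-borel}).
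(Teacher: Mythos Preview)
Your proposal is correct and follows essentially the same route as the paper's proof: fix a Borel almost disjoint family via Lemma~\ref{borel-ad}, enumerate the ``bad'' matrices with continuum repetitions, run a transfinite recursion using the second alternative of Lemma~\ref{dichotomy-M} to produce rejecting sets~$B_\xi$ inside unions of disjoint pairs from~$\A$, and then verify the hypothesis of the Reduction Lemma~\ref{reduction2} by the three-way split (reject/undermine/admit) using Monotonicity and Admission from Lemma~\ref{acceptance}. Your write-up even supplies a few details the paper leaves implicit, such as why the recursion can be continued, why the~$B_\xi$ are pairwise distinct, and why undermining is unaffected by subtracting~$\lambda I$.
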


\begin{proof}
\begin{enumerate}
\item Using Lemma \ref{borel-ad},  fix an uncountable, almost disjoint 
family $\A\subseteq\sub$ such that $\{1_A: A\in \A\}$
is a Borel subset of $\can$. 
\item Let~$\M'$ be the set of all matrices $M\in\M$ such that there
  is no countable $\A_M\subseteq \A$ and no $\lambda_M\in \K$ such that
  $\A\setminus\A_M$ accepts $M-\lambda_M I$. If $\M'=\emptyset$, set $\B=\A$ and go straight to Step~(5). Otherwise let $\{M_\xi: \xi<\mathfrak c\}$ be an enumeration of~$\M'$ with each matrix repeated continuum many times, and note that the second alternative of the dichotomy for
  acceptance and rejection (Lemma \ref{dichotomy-M}) holds for each~$M_\xi$ and~$\A$.
\item By transfinite recursion on $\xi<\mathfrak c$, construct $A_\xi,
  A_\xi'\in \A$ and $B_\xi\in\sub$ such that
 \begin{enumerate}
 \item $\{A_\eta, A_\eta'\}\cap \{A_\xi, A_\xi'\}=\emptyset$ for all $\eta<\xi<\mathfrak c$,
 \item $B_\xi\subseteq A_\xi\cup A_\xi'$,
 \item $B_\xi$ rejects $M_\xi$.
 \end{enumerate}
\item Define $\B=\{B_\xi: \xi<\mathfrak c\}$.
\item We check the required properties of $\B$, i.e., that $\B$
  satisfies one of the three conditions stated in the Reduction Lemma
  \ref{reduction2}. Fix $M\in\M$.

 If
  $M\in\M'$, then there are continuum many $\xi<\mathfrak{c}$ such that $M_\xi=M$, 
  and $B_\xi$ rejects $M$ for each of these~$\xi$,
  so the first condition in
  Lemma~\ref{reduction2} holds.
 If the set $\B' = \{ B\in\B :
  B\ \text{undermines}\ M\}$ is uncountable, then the second condition in
  Lemma~\ref{reduction2} holds.

It remains to consider the case where the set $\B'$ is
  countable and $M\notin\M'$, so that  $\A\setminus\A_M$ accepts $M-\lambda_M I$ for some
  countable $\A_M\subseteq \A$ and some $\lambda_M\in \K$.
Note that no element of $\B\setminus\B'$ undermines $M-\lambda_M I$ 
because $(\lambda_M I)1_{\{n\}}=\lambda_M1_{\{n\}}$ for every $n\in\N$. 
  The
  Monotonicity of Lemma \ref{acceptance} implies that we can find a 
  countable set $\B''\subseteq\B$ such that $\B\setminus \B''$ accepts
  $M-\lambda_M I$ since all but countably many elements of $\B$ are
  subsets of unions of pairs of elements of $\A\setminus\A_M$.
 Hence every element of  $\B\setminus(\B'\cup\B'')$ accepts  \mbox{$M-\lambda_M I$} and no element of
$\B\setminus(\B'\cup\B'')$ undermines
$M-\lambda_M I$, so the Admission of Lemma~\ref{acceptance} implies that $\B\setminus(\B'\cup\B'')$
admits \mbox{$M-\lambda_M I$}. Therefore the third condition in Lemma~\ref{reduction2} holds,
and the theorem follows.\qedhere 
\end{enumerate}
\end{proof}

Hence we have proved our main  Theorem \ref{main-theorem} subject to
the proofs of Lemmas \ref{reduction2},  \ref{acceptance}, 
 \ref{borel-ad},  \ref{van-engelen}, \ref{matrices-borel},
\ref{dichotomy-M} which will be proved in the following sections.

\section{Bounded linear operators on $C_0(K)$ for $K$ Hausdorff, 
locally compact and scattered}\label{scattered}
\noindent
In this section we consider the following standard Banach spaces
over the scalar field~$\K$, where~$\Gamma$ is an arbitrary index set:
$\ell_\infty(\Gamma)$ consisting of all $f\colon\Gamma\to\K$ such that
\[ \lVert f\rVert_\infty := \sup_{\gamma\in\Gamma}\lvert
f(\gamma)\rvert<\infty \] and $\ell_1(\Gamma)$ consisting of all
$f\colon\Gamma\to\K$ such that $\lVert f\rVert_1 :=
\sum_{\gamma\in\Gamma}\lvert f(\gamma)\rvert<\infty$, as well
as~$c_0(\Gamma)$, which is the closure in~$\ell_\infty(\Gamma)$ of the
subspace $c_{00}(\Gamma)$ consisting of finitely supported elements.
We shall also consider the collection $\M(\Gamma)$ of $\K$-valued
$\Gamma\times \Gamma$ matrices $M=(m_{\gamma, \gamma'})_{\gamma,
  \gamma'\in \Gamma}$ such that $\sup\{\|(m_{\gamma,
  \gamma'})_{\gamma'\in\Gamma}\|_1: \gamma\in \Gamma\}<\infty$.  As
usual we multiply such a matrix~$M$ by an element $f\in \ell_\infty(\Gamma)$
to obtain the element $Mf\in \ell_\infty(\Gamma)$ defined by \[ (Mf)(\gamma)=\langle
(m_{\gamma, \gamma'})_{\gamma'\in \Gamma}, f\rangle = \sum_{\gamma'\in\Gamma} m_{\gamma,\gamma'}f(\gamma'). \]

\subsection{Representing operators by infinite matrices}
Recall that all topological spaces we consider are assumed to be
Hausdorff. A topological space is \emph{scattered} if every (closed)
nonempty subset of it has a relatively isolated point. For basic
properties of compact scattered spaces, see~8.5 of~\cite{semadeni}.
The Banach space $C(K)$ for~$K$ compact and scattered is characterized
by an impressive list of strong conditions, many of which were already
surveyed in~\cite{pelczynski-semadeni}. For instance, $K$ is scattered
if and only if~$C(K)$ is an Asplund space (see \cite{vector-measures}).
Another condition that is equivalent to $K$ being scattered is that
the dual space~$C(K)^*$ is isometric to~$\ell_1(K)$. This condition is
due to Rudin~\cite{rudin} and will play a key role in the
following.

If $K$ is locally compact and scattered,
then its one-point compactification~$\alpha K$ is compact and
scattered, and so $C(\alpha K)^*$ is isometric to $\ell_1(\alpha
K)$. By the Riesz representation theorem for locally compact spaces
(see, e.g., 18.4.1 of \cite{semadeni}), the space $C_0(K)^*$ is formed of
Radon measures on~$\alpha K$ which vanish outside of~$K$.  This means
that~$C_0(K)^*$ is isometric to $\ell_1(K)$, and hence a locally
compact variant of Rudin's theorem holds as well. We will identify the
elements of $C_0(K)^*$ with Radon measures on~$K$ and with
elements of~$\ell_1(K)$ (i.e., absolutely summable functions on~$K$).

For $T\in\BB(C_0(K))$ and $x\in K$, we have $T^*(\delta_x) = \sum_{y\in K}T^*(\delta_x)(\{y\})\delta_y$, where~$T^*$ stands for
the adjoint of $T$ and $\delta_x$ for the probability measure concentrated in~$x$, and hence  
\[ T(f)(x)=T^*(\delta_x)(f)= \sum_{y\in K}T^*(\delta_x)(\{y\})f(y) = \int fdT^*(\delta_x)=\langle T^*(\delta_x),  f \rangle \]
for $f\in C_0(K)$.
 This suggests looking at operators on $C_0(K)$ as matrices in~$\M(K)$.

\begin{definition} Suppose that $K$ is a scattered,  locally compact Hausdorff space 
and that $T: C_0(K)\rightarrow C_0(K)$ is
bounded and linear. Then the \emph{matrix} of $T$ is the $K\times K$ matrix $M_T=(m_{x, y})_{x, y\in K}$ given by
$m_{x,y}=T^*(\delta_x)(\{y\})$ for $x, y\in K$. 
\end{definition}

\begin{lemma}\label{operators-scattered} Suppose that $K$ is a scattered, locally compact Hausdorff space. 
Then for every bounded linear operator $T: C_0(K)\rightarrow C_0(K)$ we have 
$M_T\in \M(K)$, and 
$$T(f)= M_Tf$$
for every $f\in C_0(K)$.
\end{lemma}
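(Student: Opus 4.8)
The plan is to verify the two assertions in turn, the membership $M_T\in\M(K)$ being essentially a reformulation of boundedness of $T^*$ and the identity $T(f)=M_Tf$ being the pointwise computation already sketched in the text preceding the statement, once one is careful about what "matrix times vector" means for a $K\times K$ matrix acting on $f\in C_0(K)\subseteq\ell_\infty(K)$. First I would record the ingredients established above: because $K$ is scattered and locally compact, $C_0(K)^*$ is isometrically isomorphic to $\ell_1(K)$ via the identification of a functional with the family of its values on the point masses; under this identification, each $T^*(\delta_x)$ corresponds to the absolutely summable function $y\mapsto T^*(\delta_x)(\{y\})$, which is precisely the $x$-th row $(m_{x,y})_{y\in K}$ of $M_T$. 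Hence $\|(m_{x,y})_{y\in K}\|_1 = \|T^*(\delta_x)\|_{\ell_1(K)} = \|T^*(\delta_x)\|_{C_0(K)^*}\leq \|T^*\|\,\|\delta_x\| = \|T\|$, and taking the supremum over $x\in K$ gives $\|M_T\|\leq\|T\|<\infty$, so $M_T\in\M(K)$. In particular every row is in $\ell_1(K)$, so the pairing $\langle (m_{x,y})_{y\in K}, f\rangle = \sum_{y\in K} m_{x,y}f(y)$ converges absolutely for every $f\in\ell_\infty(K)$, and $M_Tf$ is a well-defined element of $\ell_\infty(K)$ with $\|M_Tf\|_\infty\leq\|M_T\|\,\|f\|_\infty$.

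Next I would prove the identity $T(f)(x)=(M_Tf)(x)$ for every $f\in C_0(K)$ and every $x\in K$. Evaluating $T(f)$ at $x$ means applying the point-evaluation functional, i.e.\ $T(f)(x)=\delta_x(T(f))=T^*(\delta_x)(f)$. Now $f\in C_0(K)$ is integrated against the Radon measure $T^*(\delta_x)$, which is purely atomic (being an element of $\ell_1(K)$, by Rudin's theorem as recalled above), so $T^*(\delta_x)(f)=\int f\,dT^*(\delta_x)=\sum_{y\in K}T^*(\delta_x)(\{y\})f(y)=\sum_{y\in K}m_{x,y}f(y)=(M_Tf)(x)$. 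The interchange of the functional with the sum here is exactly the statement that an element of $\ell_1(K)$ acts on $\ell_\infty(K)\supseteq C_0(K)$ by the absolutely convergent sum, which is immediate from absolute summability of the row. This yields $T(f)=M_Tf$ as functions on $K$; and since by hypothesis $T(f)\in C_0(K)$, the right-hand side automatically lies in $C_0(K)$, so no separate verification that $M_Tf\in C_0(K)$ is needed.

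The only genuinely nontrivial input is the identification $C_0(K)^*\cong\ell_1(K)$ for scattered locally compact $K$ (the locally compact variant of Rudin's theorem), which the excerpt already supplies together with the description of $C_0(K)^*$ via the Riesz representation theorem as Radon measures on $\alpha K$ vanishing outside $K$; everything else is bookkeeping about absolutely summable families and the definition of the adjoint. The main point to be careful about — what I would flag as the one place a reader could stumble — is that $M_T$ is a $K\times K$ matrix whose rows, not columns, are the summable vectors, so that $M_Tf$ is defined for arbitrary bounded $f$ and the computation does not require any summability of $f$ itself; once this is said, the proof is a one-line pointwise calculation. I would therefore keep the written proof short: state $\|M_T\|\leq\|T\|$ with the norm-of-adjoint estimate, then give the chain $T(f)(x)=T^*(\delta_x)(f)=\sum_{y}m_{x,y}f(y)=(M_Tf)(x)$, invoking Rudin's theorem for the atomicity of $T^*(\delta_x)$.
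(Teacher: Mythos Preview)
Your proposal is correct and follows precisely the same approach as the paper's own proof: invoke Rudin's theorem to see that each row $(m_{x,y})_{y\in K}=T^*(\delta_x)$ lies in $\ell_1(K)$ with norm at most $\|T\|$, giving $M_T\in\M(K)$, and then carry out the pointwise chain $T(f)(x)=T^*(\delta_x)(f)=\sum_{y\in K}m_{x,y}f(y)=(M_Tf)(x)$. The paper compresses this into two sentences, but the argument is identical; your write-up simply spells out the bookkeeping.
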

\begin{proof}  We have $M_T\in \M(K)$ by Rudin's theorem mentioned above, and 
$T(f)(x)=\sum_{y\in K}T^*(\delta_x)(\{y\})f(y)=(M_Tf)(x)$ for every $f\in C_0(K)$ and
$x\in K$.
\end{proof}

As usual, $K'$ denotes the Cantor--Bendixson derivative of a topological space~$K$, that is, the subspace
of~$K$ formed of the nonisolated points of $K$.
If $K$ is scattered, then the set of isolated points $K\setminus K'$ is dense in $K$ (since
isolated points relative to an open set are isolated in $K$). 

\begin{definition}\label{reduced-matrix}
Let $K$ be a scattered, locally compact Hausdorff space.  The
\emph{reduced matrix} of a bounded linear operator $T:
C_0(K)\rightarrow C_0(K)$ is $M_T^r=(m_{x, y})_{x, y\in K\setminus K'}$, where $m_{x,y}=T^*(\delta_x)(\{y\})$ for $x, y\in K\setminus K'$, as above.
\end{definition}

Any continuous function is, of course, determined by its  values on any dense subset of its domain. Analogously, bounded linear operators on~$C_0(K)$ are, to some extent, determined by 
their reduced matrix.

\begin{lemma}\label{reduced-lemma}
Let $K$ be an infinite, scattered, locally
 compact Hausdorff space,  and let $D=K\setminus K'$ be the set of its isolated points.
Suppose that $T: C_0(K)\rightarrow C_0(K)$ is
bounded and linear. Then there is   $E\subseteq K'$ of cardinality 
not bigger than the cardinality of $D$  such that
$$T(f)|D=M_T^r(f|D)$$
whenever $f\in C_0(K)$ and $f(y)=0$ for every $y\in E$.
\end{lemma}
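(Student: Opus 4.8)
The plan is to exploit the fact that for an operator $T$ on $C_0(K)$ with $K$ scattered, each functional $T^*(\delta_x)$ for $x \in D$ is an element of $\ell_1(K)$ (by Rudin's theorem, as recalled above), and $\ell_1$-vectors are ``essentially countably supported'' in a uniform way only for countably many of them at a time, but here $D$ may be uncountable. The point of the set $E$ is that it absorbs the (possibly uncountably many) non-isolated points that genuinely contribute to the values $T(f)(x)$ for $x \in D$. First I would fix, for each $x \in D$, the measure $\mu_x = T^*(\delta_x) \in \ell_1(K)$, and decompose it as $\mu_x = \mu_x|D + \mu_x|K'$, i.e.\ into its restriction to the isolated points and to the derivative. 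The ``isolated part'' is exactly what the reduced matrix $M_T^r$ records: $\langle \mu_x|D, f|D\rangle = (M_T^r(f|D))(x)$. So the error term is $T(f)(x) - (M_T^r(f|D))(x) = \langle \mu_x|K', f\rangle = \sum_{y \in K'} \mu_x(\{y\}) f(y)$, and the task reduces to: find $E \subseteq K'$ with $|E| \le |D|$ such that $\langle \mu_x|K', f\rangle = 0$ for all $x \in D$ whenever $f$ vanishes on $E$.

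The natural candidate is $E = \bigcup_{x \in D} \operatorname{supp}(\mu_x|K')$, the union over $x \in D$ of the (countable, since $\mu_x \in \ell_1(K)$) supports of the derivative-parts of the measures. Each such support is countable, $D$ is infinite, so $|E| \le |D| \cdot \aleph_0 = |D|$, giving the cardinality bound. And if $f \in C_0(K)$ vanishes on $E$, then in particular $f$ vanishes on $\operatorname{supp}(\mu_x|K')$ for every $x$, hence $\langle \mu_x|K', f\rangle = 0$, hence $T(f)(x) = (M_T^r(f|D))(x)$ for every $x \in D$, which is exactly $T(f)|D = M_T^r(f|D)$. The one genuine subtlety — and where I expect the main (though modest) obstacle to lie — is making the decomposition $\mu_x = \mu_x|D + \mu_x|K'$ and the pairing manipulations rigorous in the measure-theoretic language: one must use that $C_0(K)^* \cong \ell_1(K)$ via $\nu \mapsto (\nu(\{y\}))_{y \in K}$ (the locally compact Rudin theorem recalled in the text), that $D$ is dense so that $f \in C_0(K)$ is determined by $f|D$, and that $M_T^r(g)$ for $g = f|D$ makes sense as a bounded operator — this last point needs $M_T^r \in \M(D)$, which follows from $M_T \in \M(K)$ (Lemma~\ref{operators-scattered}) by simply discarding rows and columns indexed by $K'$, since the row $\ell_1$-norms only decrease.

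A small point to check is that $M_T^r(f|D)$ is to be read as the matrix $M_T^r \in \M(D)$ applied to the element $f|D \in \ell_\infty(D)$ in the sense of the matrix action defined in Section~\ref{scattered}; this is well-defined and bounded precisely because the rows of $M_T^r$ are rows of $M_T$ with some entries deleted, so $\|M_T^r\|_{\M(D)} \le \|M_T\|_{\M(K)} < \infty$. No compactness or separability of $K$ is needed, only scatteredness (to invoke Rudin) and local compactness (to invoke the Riesz representation identifying $C_0(K)^*$ with measures vanishing off $K$); the hypothesis that $K$ is infinite is used only to guarantee $|D| \ge \aleph_0$ so that the crude bound $|D| \cdot \aleph_0 = |D|$ holds, giving ``$|E|$ not bigger than $|D|$'' as stated.
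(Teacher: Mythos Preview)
Your proposal is correct and is essentially identical to the paper's proof: the paper defines $E=\{y\in K': T^*(\delta_x)(\{y\})\ne 0\ \text{for some}\ x\in D\}$, which is exactly your $\bigcup_{x\in D}\operatorname{supp}(\mu_x|K')$, and then argues the cardinality bound and the vanishing of the error term in the same way. The ``subtleties'' you flag are not issues the paper lingers on, but your treatment of them is accurate.
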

\begin{proof}
Let $E=\{y\in K': T^*(\delta_x)(\{y\})\not =0\ \text{for some}\ x\in
D\}.$ As $T^*(\delta_x)\in \ell_1(K)$, for a given $x\in K$, the value
$T^*(\delta_x)(\{y\})$ is nonzero for at most countably many $y\in
K$. Hence the cardinality of $E$ is not bigger than the cardinality of
$D$, as $D$ is infinite by the hypothesis that $K$ is infinite.  For
$x\in D$ and $f\in C_0(K)$ with $f(y)=0$ for all $y\in E$, we have
$$T(f)(x)=\sum_{y\in K\setminus K'} T^*(\delta_x)(\{y\}) f(y)=\big(M_T^r(f|D)\big)(x),$$
as required.
\end{proof}

A natural question is which $D\times D$ matrices are reduced matrices
of bounded linear operators on $C_0(K)$, or which $K\times K$ matrices
are matrices of bounded linear operators on $C_0(K)$.  This, of
course, depends on the topology of $K$, which de\-ter\-mines the space
$C_0(K)\subseteq\ell_\infty(K)$. However, we know that such matrices
belong to~$\M(D)$ and $\M(K)$, respectively.  Using standard arguments, one can show that these matrices correspond
exactly to all bounded linear operators from $c_0(D)$ in\-to~$\ell_\infty(D)$ (respectively, from $c_0(K)$ into $\ell_\infty(K)$),
and such operators are in isometric correspondence with the
weak$^*$-continuous operators on $\ell_\infty(D)$ (respectively, on~$\ell_\infty(K)$), and with the adjoints of operators on~$\ell_1(D)$
(respectively, on~$\ell_1(K)$). We refer to~\cite{wilansky} for details of these correspondences, which 
will not be exploited
here.\label{matricesasops}

\subsection{Bounded linear operators on the Banach space $C_0(K_\A)$}

The main purpose of this subsection is to prove the Reduction Lemma
\ref{reduction2}.  For this we need a piece of terminology and a
couple of lemmas. First recall the terminology and the topological
facts concerning the space $K_\A$ from Subsection~\ref{psi-section}.

\begin{definition}\label{Defn_sf} Suppose
that $\A\subseteq\sub$ is an almost disjoint family.
For $f\in C_0(K_\A)$ and $\mathcal{X}\subseteq C_0(K_\A)$, we define
$$s(f)=\{A\in \A: 
f(y_A)\not=0\},\quad s(\mathcal{X})=\bigcup\{s(f): f\in \mathcal{X}\}.$$
\end{definition}

\begin{lemma}\label{separability-lemma} Suppose
that $\A\subseteq\sub$ is an almost disjoint family.  A closed
sub\-space~$\mathcal{X}$ of $C_0(K_\A)$ is separable if and only if
$s(\mathcal{X})$ is countable.
\end{lemma}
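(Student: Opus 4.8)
The plan is to prove the two implications separately; the routine direction is "$\mathcal{X}$ separable $\Rightarrow s(\mathcal{X})$ countable'', and the substance lies in the converse.

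First I would record the elementary fact that $s(f)$ is countable for \emph{every} $f\in C_0(K_\A)$: for each $\varepsilon>0$ the set $\{x\in K_\A:\lvert f(x)\rvert\geq\varepsilon\}$ is compact, so by Lemma~\ref{topological} its intersection with the closed discrete set $\{y_A:A\in\A\}$ is finite, whence $s(f)=\bigcup_{j\geq 1}\{A\in\A:\lvert f(y_A)\rvert\geq 1/j\}$ is countable. The implication "$\mathcal{X}$ separable $\Rightarrow s(\mathcal{X})$ countable'' is then immediate: if $\{f_m:m\in\N\}$ is dense in $\mathcal{X}$ and $A\in s(f)$ for some $f\in\mathcal{X}$, then choosing $m$ with $\lVert f-f_m\rVert_\infty<\lvert f(y_A)\rvert$ forces $f_m(y_A)\neq 0$, so $A\in s(f_m)$; thus $s(\mathcal{X})=\bigcup_m s(f_m)$ is a countable union of countable sets.

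For the converse, suppose $\A':=s(\mathcal{X})$ is countable and consider the subspace $L_{\A'}=\{x_n:n\in\N\}\cup\{y_A:A\in\A'\}$ of $K_\A$. I would first check that $L_{\A'}$ is a second-countable, locally compact Hausdorff space: it is Hausdorff and locally compact as a subspace of $K_\A$ (each $x_n$ is isolated, and $U(A)\cap L_{\A'}=\{x_n:n\in A\}\cup\{y_A\}$ is a compact neighbourhood of $y_A$ for $A\in\A'$), while the singletons $\{x_n\}$ together with the sets $U(A,F)\cap L_{\A'}$ for $A\in\A'$ and $F\in\fin$ form a countable base. Since $\mathcal{X}$ is contained in the subspace $\{f\in C_0(K_\A):s(f)\subseteq\A'\}$, it suffices to show that this subspace embeds isometrically into $C_0(L_{\A'})$, because $C_0$ of a second-countable locally compact Hausdorff space is separable (its one-point compactification is a compact metrizable space).

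The embedding will be the restriction map $f\mapsto f\restriction L_{\A'}$, which is isometric since $\{x_n:n\in\N\}\subseteq L_{\A'}$ is dense in $K_\A$ (Lemma~\ref{topological}), so that $\lVert f\restriction L_{\A'}\rVert_\infty=\sup_n\lvert f(x_n)\rvert=\lVert f\rVert_\infty$ for every continuous $f$; this equality also gives injectivity. The step that requires real work — and the one I expect to be the main obstacle — is showing that $f\restriction L_{\A'}$ genuinely vanishes at infinity on $L_{\A'}$ whenever $s(f)\subseteq\A'$. Here I would use the standard description of the compact subsets of $K_\A$: any compact $L\subseteq K_\A$ meets $\{y_A:A\in\A\}$ in a finite set $\{y_{B_1},\dots,y_{B_p}\}$, and $L\setminus(U(B_1)\cup\dots\cup U(B_p))$ is then a compact subset of the discrete open set $\{x_n:n\in\N\}$, hence finite, so $L\subseteq U(B_1)\cup\dots\cup U(B_p)\cup F$ for some finite $F$. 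Applying this to $L=\{x\in K_\A:\lvert f(x)\rvert\geq\varepsilon\}$, each $B_i$ satisfies $\lvert f(y_{B_i})\rvert\geq\varepsilon>0$ and therefore lies in $s(f)\subseteq\A'$; consequently $\{z\in L_{\A'}:\lvert f(z)\rvert\geq\varepsilon\}=L\cap L_{\A'}$ is contained in the compact set $\bigcup_{i=1}^p(U(B_i)\cap L_{\A'})\cup(F\cap L_{\A'})$ of $L_{\A'}$, and being a closed subset of it, is itself compact. This shows $f\restriction L_{\A'}\in C_0(L_{\A'})$, and the proof is complete.
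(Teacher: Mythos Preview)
Your proof is correct and follows essentially the same approach as the paper. The paper's argument for the converse is simply the one-line assertion that $\mathcal{X}$ is isomorphic to a subspace of $C_0(K_{s(\mathcal{X})})$, which is separable by Lemma~\ref{topological}; your subspace $L_{\A'}$ is precisely $K_{\A'}$ (the topologies agree), and your careful verification that the restriction map lands in $C_0(L_{\A'})$ is exactly what one needs to justify that terse assertion.
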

\begin{proof}
First note  that for any $f\in C_0(K_\A)$, the set  $s(f)$ is countable
because only finite subsets of the discrete, closed set $\{y_A: A\in \A\}$
are compact, and $\{x\in K_\A: |f(x)|\geq\varepsilon\}$ is compact for each $\varepsilon>0$.

Now suppose that $\mathcal{X}$ is separable, and 
let $\mathcal D$ be a countable dense subset of $\mathcal{X}$. Then $\B=s(\mathcal D)$ is
countable. Note that $s(\X)\subseteq \B$ because 
 the set 
 $$\{f\in C(K_\A): s(f)\subseteq \B\}=\{f\in C(K_\A):
 f(y_A)=0\ \hbox{for all}\ A\in\A\setminus \B\}$$ is a closed
 subspace of $C_0(K_\A)$, and it contains $\mathcal D$, so it contains
 $\mathcal{X}$. Hence $s(\mathcal{X})$ is countable.

On the other hand, if $s(\X)$ is countable, then
$\X$ is isomorphic to a subspace
of $C_0(K_{s(\X)})$, which is separable by Lemma \ref{topological}.
\end{proof}

\begin{lemma}[Example~2c of \cite{johnson-lindenstrauss}, cf.\ Lemma~3 of \cite{mrowka}]\label{jl-c0}
 Suppose that  $\A\subseteq\sub$ is an almost disjoint family and that $\X\subseteq C_0(K_\A)$ is separable.
Then there is a closed sub\-space~$\mathcal{Y}$ of $C_0(K_\A)$ such that $\X\subseteq \mathcal{Y}$ and $\mathcal{Y}\sim c_0$.
\end{lemma}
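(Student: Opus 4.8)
The plan is to realize the desired copy of $c_0$ as the closed span of a suitable family of compactly supported continuous functions together with the characteristic functions associated with a countable piece of $\A$, and then upgrade it to contain the prescribed separable subspace $\X$. First I would note that, by Lemma~\ref{separability-lemma}, separability of $\X$ means that the set $\B_0 = s(\X) \subseteq \A$ is countable; enumerate it as $\B_0 = \{A_n : n \in \N\}$ (or finite, in which case the argument is even easier). The point of this reduction is that $\X$ ``lives'' on the separable subspace $C_0(K_{\B_0})$ in the sense that every $f \in \X$ vanishes at $y_A$ for $A \in \A \setminus \B_0$, so the whole problem takes place inside $C_0(K_{\B_0})$, which by Lemma~\ref{topological} is a separable $C_0$-space over a scattered, locally compact, metrizable space.

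The key structural input is the classical fact (this is exactly Example~2c of~\cite{johnson-lindenstrauss}, and the reason the lemma is attributed there) that if $K$ is a countable, scattered, locally compact Hausdorff space, then $C_0(K)$ is isomorphic to $c_0$; more precisely, $c_0 \sim C_0(\omega^\omega) \sim C_0(K)$ for any such $K$ of finite Cantor--Bendixson rank — and $K_{\B_0}$ has rank $2$. So I would set $\mathcal Y = \{ f \in C_0(K_\A) : f(y_A) = 0 \text{ for all } A \in \A \setminus \B_0 \}$. This is a closed subspace of $C_0(K_\A)$ (it is the kernel of a family of evaluation functionals, and it was already observed to be closed in the proof of Lemma~\ref{separability-lemma}), it plainly contains $\X$ since $s(\X) \subseteq \B_0$, and the restriction map $f \mapsto f \,|\, K_{\B_0}$ is an isometric isomorphism of $\mathcal Y$ onto $C_0(K_{\B_0})$: it is clearly a norm-one map into $C_0(K_{\B_0})$, it is injective because $K_{\B_0} \cup \{x_n : n \notin \bigcup \B_0\}$ misses only the points $y_A$ with $A \notin \B_0$ where functions in $\mathcal Y$ vanish (one has to be a little careful about isolated points $x_n$ with $n$ outside $\bigcup\B_0$, but such $x_n$ sit in $K_{\B_0}$ as isolated points anyway, so no information is lost), and it is surjective because any $g \in C_0(K_{\B_0})$ extends by $0$ on the remaining $y_A$'s and by its given values on the isolated points to a function in $C_0(K_\A)$, continuity being checked neighbourhood-basis-wise at each $y_{A_n}$ using that $(x_k)_{k \in A_n}$ converges to $y_{A_n}$. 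Hence $\mathcal Y \sim C_0(K_{\B_0}) \sim c_0$.

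I expect the main obstacle to be the citation-level fact that $C_0(K_{\B_0}) \sim c_0$ for the countable, rank-$2$, locally compact space $K_{\B_0}$; depending on how much one wants to rely on~\cite{johnson-lindenstrauss} versus give a self-contained argument, one either simply invokes their Example~2c (which is the stated attribution, so this is legitimate), or one builds the isomorphism by hand: write $K_{\B_0} = \{x_n : n \in \N\} \cup \{y_{A_n} : n \in \N\}$, observe $C_0(K_{\B_0})$ contains the complemented $c_0$ spanned by $\{1_{\{x_n\}}\}$ together with, for each $n$, the function $1_{U(A_n)}$ minus corrections to make the $A_n$'s behave disjointly (using almost disjointness), and check that the resulting decomposition exhibits $C_0(K_{\B_0})$ as $(\sum_n Z_n)_{c_0}$ with each $Z_n$ finite-dimensional-plus-$c_0$, which is again $c_0$; an alternative clean route is to note $\alpha K_{\B_0}$ is a countable compact metric space, hence homeomorphic to a countable successor ordinal, so $C(\alpha K_{\B_0}) \sim c_0$ by Bessaga--Pe\l{}czy\'nski, and $C_0(K_{\B_0})$ is complemented in $C(\alpha K_{\B_0})$ of codimension $1$. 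The remaining verifications — closedness of $\mathcal Y$, $\X \subseteq \mathcal Y$, and that the restriction map is an onto isometry — are routine topology of $K_\A$ of the kind already carried out in Subsection~\ref{psi-section} and in Lemma~\ref{separability-lemma}, so I would keep them brief.
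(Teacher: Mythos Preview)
The paper does not give its own proof of this lemma; it is simply attributed to Example~2c of~\cite{johnson-lindenstrauss} and Lemma~3 of~\cite{mrowka}. Your approach is the natural one and is essentially correct: pass to the countable subfamily $\B_0=s(\X)$ via Lemma~\ref{separability-lemma}, identify the closed subspace $\mathcal Y=\{f\in C_0(K_\A): f(y_A)=0\ \text{for all}\ A\in\A\setminus\B_0\}$ with $C_0(K_{\B_0})$ by restriction, and then quote the classical fact that $C_0$ of a countable, scattered, locally compact Hausdorff space is isomorphic to~$c_0$ (either via Mazurkiewicz--Sierpi\'nski and Bessaga--Pe\l czy\'nski, or directly as in~\cite{johnson-lindenstrauss}). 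Indeed, the paper already gestures at the identification $\mathcal Y\cong C_0(K_{\B_0})$ in the last line of the proof of Lemma~\ref{separability-lemma}.

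There is, however, one genuine gap in your surjectivity argument. When you extend $g\in C_0(K_{\B_0})$ to $\tilde g$ on $K_\A$ by setting $\tilde g(y_A)=0$ for $A\in\A\setminus\B_0$, you say continuity is ``checked neighbourhood-basis-wise at each $y_{A_n}$''. But continuity at $y_{A_n}$ for $A_n\in\B_0$ is automatic: the basic neighbourhood $U(A_n,F)$ is already contained in $K_{\B_0}$, and $\tilde g|K_{\B_0}=g$ is continuous there. What actually requires verification is continuity at $y_A$ for $A\in\A\setminus\B_0$, i.e., that $\lim_{n\in A} g(x_n)=0$ for every such~$A$. The needed observation --- and this is precisely where almost disjointness is used --- is that $A\cap B$ is finite for every $B\in\B_0$, so the sequence $(x_n)_{n\in A}$ eventually leaves every compact subset of~$K_{\B_0}$; hence $g(x_n)\to 0$ along $n\in A$ because $g\in C_0(K_{\B_0})$. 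This step is easy, but it is the heart of the matter, and your write-up points the reader to the trivial check instead of this one.
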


Recall the terminology introduced in Subsection \ref{few-operators-section}, in particular 
Definitions \ref{matrices} and \ref{admits-M}.

\begin{lemma}[Reduction Lemma]\label{reduction2}
 Suppose that  $\B\subseteq\sub$ is an almost disjoint family such that,  for every $M\in \M$, one of the following three conditions holds:
  \begin{enumerate}
    \item there are uncountably many $B\in\B$ which reject $M$,
\item or there are uncountably many $B\in\B$ that undermine $M$,
\item or there is $\lambda\in \K$ and 
a countable subset $\B'\subseteq \B$ such that $\B\setminus \B'$ admits $M-\lambda I$.
\end{enumerate}
Then all bounded linear operators $T: C_0(K_\B)\rightarrow C_0(K_\B)$ are of the form
$$T=\lambda \Id+S,$$
 where $\lambda\in \K$ and $S$  is an operator which factors through $c_0$.
\end{lemma}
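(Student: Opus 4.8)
The plan is to take an arbitrary $T\in\BB(C_0(K_\B))$, pass to its reduced matrix $M=M_T^r$, which is an element of $\M$ (identifying the isolated points $\{x_n:n\in\N\}$ with $\N$), and apply the trichotomy in the hypothesis to this $M$. First I would rule out the first two alternatives: if some $B\in\B$ rejects $M$, then $\lim_{k\in B}(M1_B)(k)$ does not exist, but $1_{U(B)}\in C_0(K_\B)$ and continuity of $Tf$ at $y_B$ together with Lemma \ref{reduced-lemma} (applied to the function $1_{U(B)}$, after absorbing the countably many ``bad'' coordinates in the exceptional set $E$ — here I would need to be a little careful, but since $E$ is countable and $\B$ uncountable, only countably many $B\in\B$ can be affected) would force that limit to exist; hence only countably many $B\in\B$ can reject $M$, which kills alternative~(1). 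Similarly, if $B$ undermines $M$, there is $n\in\N$ with $((M1_{\{n\}})(k))_{k\in B}\not\to 0$; but $T(1_{\{x_n\}})\in C_0(K_\B)$ must vanish at $y_B$ for all but countably many $B$ (again because $T(1_{\{x_n\}})$ is supported on a countable set of the $y_B$'s, by Lemma \ref{separability-lemma} applied to the one-dimensional space, or directly since $T^*(\delta_{y_B})(\{x_n\})$ can be nonzero for only countably many $B$ — actually I want $(M1_{\{n\}})(k)=T(1_{\{x_n\}})(x_k)$ to tend to $T(1_{\{x_n\}})(y_B)=0$), so only countably many $B\in\B$ undermine $M$, killing alternative~(2).

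Hence alternative~(3) holds: there are $\lambda\in\K$ and a countable $\B'\subseteq\B$ such that $\B\setminus\B'$ admits $M-\lambda I$, i.e.\ $\lim_{k\in A'}\bigl((M-\lambda I)1_A\bigr)(k)=0$ for all $A,A'\in\B\setminus\B'$. Set $S=T-\lambda\Id$; its reduced matrix is (essentially) $M-\lambda I$. I claim $S$ has separable range, which by Lemma \ref{jl-c0} is equivalent to factoring through $c_0$. By Lemma \ref{separability-lemma} it suffices to show $s(S[C_0(K_\B)])$ is countable, i.e.\ that $S(f)(y_A)=0$ for all but countably many $A\in\B$, uniformly over $f\in C_0(K_\B)$. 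The countable exceptional set will be $\B'$ together with the exceptional set $E$ from Lemma \ref{reduced-lemma} and the countably many $A$ for which the representation of $f$ near $y_A$ causes trouble. For $A\in\B\setminus(\B'\cup\text{countable junk})$ and $f\in C_0(K_\B)$, I would compute $S(f)(y_A)=\lim_{k\in A}S(f)(x_k)$ (by continuity of $Sf$ and $x_k\to y_A$) $=\lim_{k\in A}\bigl((M-\lambda I)(f|D)\bigr)(k)$ (by Lemma \ref{reduced-lemma}, for $f$ vanishing on $E$; the general $f$ differs by something supported near $E$, handled separately). Then I need to decompose $f|D$ into a part coming from $A$ itself, parts coming from other elements of $\B$, and a part that is "diffuse"; the admission hypothesis controls the cross terms $\lim_{k\in A}\bigl((M-\lambda I)1_{A_i}\bigr)(k)$, while a tail/compactness estimate using $\|M-\lambda I\|<\infty$ and the almost disjointness of $\B$ controls the diffuse remainder and the overlaps $A\cap A_i\in\fin$.

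The main obstacle I expect is exactly this last decomposition step: showing that for a \emph{single fixed} bad index $A$, the value $S(f)(y_A)$ vanishes \emph{simultaneously} for every $f\in C_0(K_\B)$, not just for the special functions $1_{U(B)}$ to which the admission hypothesis directly applies. An arbitrary $f\in C_0(K_\B)$ restricted to the isolated points is a (bounded) combination of characteristic functions of sets $A_i\in\B$ plus a $c_0$-error, and there can be infinitely many $A_i$ involved; I must show $\lim_{k\in A}\bigl((M-\lambda I)(f|D)\bigr)(k)=0$ by splitting $f|D = f|_{A\setminus\bigcup} + \sum_i c_i 1_{A_i\setminus A} + (\text{small})$ and arguing each piece contributes nothing in the limit along $A$ — the first because $(M-\lambda I)$ restricted to the relevant block is compact-like off $E$, the middle sum because each cross-limit is $0$ by admission and a dominated-convergence / uniform-tail argument lets us interchange the limit with the (possibly infinite) sum using $\|M-\lambda I\|<\infty$, and the error because bounded operators are continuous. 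Making the interchange of limit and infinite sum rigorous, and correctly bookkeeping which countably many $A\in\B$ must be discarded along the way, is where the real work lies; the rest is assembling Lemmas \ref{separability-lemma}, \ref{jl-c0} and \ref{reduced-lemma}.
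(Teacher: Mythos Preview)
Your treatment of alternatives (1) and (2) is essentially the paper's argument: using the exceptional set~$E$ from Lemma~\ref{reduced-lemma} to bound the number of rejecting~$B$'s, and using that each $T(1_{\{x_n\}})\in C_0(K_\B)$ is nonzero at only countably many~$y_B$ to bound the number of undermining~$B$'s. So far so good.

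The difficulty you flag in the final step is self-inflicted. You do \emph{not} need to decompose an arbitrary $f|D$ as an infinite series $\sum_i c_i 1_{A_i}+(\text{error})$ in $\ell_\infty$ and then justify an interchange of limit and sum; indeed, such a series need not even make sense pointwise, since a fixed $n\in\N$ may lie in infinitely many~$A_i$. The paper sidesteps this entirely by checking $S(g)(y_{B'})$ only for the \emph{generators} $g\in\{1_{U(B)}:B\in\B\}\cup\{1_{\{x_n\}}:n\in\N\}$. For $B\in\B\setminus(\B_1\cup\B')$ one gets directly from admission that
\[
S(1_{U(B)})(y_{B'})=\lim_{k\in B'}\bigl((M-\lambda I)1_B\bigr)(k)=0\qquad(B'\in\B\setminus\B'),
\]
so $s\bigl(S(1_{U(B)})\bigr)\subseteq\B'$. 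The remaining generators (those $1_{U(B)}$ with $B\in\B_1\cup\B'$, and all $1_{\{x_n\}}$) are countably many, and each $s\bigl(S(g)\bigr)$ is countable. Let~$\B_0$ be the union of all these countable sets. Since the generators span a dense subspace of $C_0(K_\B)$ and $\{f:f(y_{B'})=0\ \text{for all}\ B'\notin\B_0\}$ is a closed linear subspace, it follows that $s(S[C_0(K_\B)])\subseteq\B_0$. Then Lemma~\ref{separability-lemma} gives separable range and Lemma~\ref{jl-c0} gives factorization through~$c_0$. No limit-interchange is needed.
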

\begin{proof} Let $K=K_\B$, and
fix a bounded linear operator $T: C_0(K)\rightarrow C_0(K)$.
Consider its reduced matrix $M_T^r=(m_{x_k, x_n})_{x_k, x_n\in K\setminus K'}$, and define
$M=(m_{k, n})_{k, n\in \N}$
by $m_{k, n}=m_{x_k, x_n}$ for all $k, n\in \N$. 
Lemma \ref{operators-scattered}  and Definition \ref{reduced-matrix}
 imply
that  $M\in \M$.

First we shall see that for this choice of $M$, there are at most countably many elements \mbox{$B\in\B$} which reject or undermine~$M$. This will be
based on the part of Lemma~\ref{topological} which says that
$\lim_{k\in B}x_k=y_B$ for every $B\in \B$. In fact
 the continuity of $T(1_{U(B)})$ at all points $y_{B'}$ for
$B'\in \B$  will contradict 
rejecting or undermining~$M$ for un\-count\-ably many elements of~$\B$.

As $D=K\setminus K'=\{x_n : n\in \N\}$ is countable, by Lemma
\ref{reduced-lemma} there is a countable set $E\subseteq K'=\{y_B: B\in
\B\}$ such that $T(f)|D=M^r_T(f|D)$ whenever $f\in C_0(K)$ satisfies
$f(y)=0$ for every $y\in E$.  Let $\B_1=\{B\in\B : y_B\in E\}$. The above condition
translates into $T(1_{U(B)})|D=M^r_T(1_{U(B)}|D)$ for $B\in\B\setminus
\B_1$, which implies that
 $$T(1_{U(B)})(y_B)=\lim_{k\in B}T(1_{U(B)})(x_k)=\lim_{k\in
  B}\big(M^r_T(1_{U(B)}|D)\big)(x_k)= \lim_{k\in B}(M1_B)(k).$$ Hence
no element of $\B\setminus\B_1$ rejects $M$, so in particular at most
countably many elements of~$\B$ reject $M$.
 
The subspace $\X=\{f\in C_0(K): f(y_B)=0\ \hbox{for all}\ B\in \B\}$ is
separable because $\{1_{\{x_n\}}: n\in \N\}$ spans a dense subspace of
it. It follows that~$T[\X]$ is separable, and therefore $\B_2=s(T[\X])$ is
countable by Lemma \ref{separability-lemma}. Let $n,k\in\N$. As
$x_n\not\in K'$, Lemma~\ref{reduced-lemma} implies that
$T(1_{\{x_n\}})(x_k) = M_T^r(1_{\{x_n\}}|D)(x_k) = (M1_{\{n\}})(k)$, and
so $\lim_{k\in B}(M1_{\{n\}})(k) = T(1_{\{x_n\}})(y_B) = 0$ for every
$B\in \B\setminus \B_2$. This shows that no element of $\B\setminus
\B_2$ undermines $M$, so in particular at most countably many elements
of~$\B$ undermine~$M$.
 
Hence the hypothesis about $\B$ implies that there is $\lambda\in \K$ and a countable
sub\-set~$\B'\subseteq \B$ such that $\B\setminus\B'$ admits $M-\lambda I$. We shall now complete the proof by showing that the operator  $S=T-\lambda \Id\in\BB(C_0(K))$ factors through~$c_0$.   
Since \[ \Id(1_{U(B)})(x_k)=1_{U(B)}(x_k)=1_B(k) \] for $B\in\B$ and $k\in\N$, the condition that $\B\setminus\B'$ admits $M-\lambda I$ implies that
 $$0=\lim_{k\in B'}\big((M-\lambda I)1_B\big)(k)
=\lim_{k\in B'}\big((T-\lambda \Id)(1_{U(B)})\big)(x_k) = \lim_{k\in B'} S(1_{U(B)})(x_k)$$
for $B\in \B\setminus (\B_1\cup \B')$ and $B'\in \B\setminus \B'$. As $(x_k)_{k\in B'}$ converges to $y_{B'}$ and
$S(1_{U(B)})$ is continuous on $K$, this yields that 
$S(1_{U(B)})(y_{B'})=0$ for $B, B'$ as above, and so 
we have 
$$s\bigl(\{S(1_{U(B)}): B\in\B\setminus (\B_1\cup\B')\}\bigr)\subseteq \B'.$$
This, together with the fact that the characteristic functions $1_{U(B)}$  and $1_{\{x_n\}}$ for
$B\in \B$ and $n\in \N$ span a dense subspace
of~$C_0(K)$, implies that
$$s(S[C_0(K)])\subseteq \B'\cup s\bigl(\{S(1_{U(B)}): B\in \B_1\cup\B'\}\bigr)\cup 
s\bigl(\{ S(1_{\{x_n\}}) : n\in\N\}\bigr),$$
which is countable by Lemma \ref{separability-lemma}, and so
 $S[C_0(K)]$ is separable by another application of Lemma \ref{separability-lemma}.
 By Lemma~\ref{jl-c0}, the range of~$S$ is contained in a copy of $c_0$,
 so~$S$ factors through $c_0$, as required.
\end{proof}

\section{Compact $\N\times\N$ matrices}\label{matrices-section}
\noindent
The main purpose of this section is to prove Lemma~\ref{acceptance},
which is used in both Sub\-sec\-tion~\ref{few-operators-section} and Section~\ref{borel-section}.
Lemma~\ref{wc-diagonal} will also be used in Section~\ref{borel-section}.
The main motivation behind this section is of operator theoretic nature.
However, the results that we need are purely combinatorial, so
we leave their  operator theoretic aspects as a comment at the end of the section.
Recall Definitions~\ref{matrices} and~\ref{compact-def}.

\begin{lemma}\label{d-g} Suppose that $M=(m_{k, n})_{k, n\in \N}\in \M$. Then $M$ is compact if and only
  if $\lim_{n\rightarrow\infty}(\sum_{i\in F_n}m_{k_n, i})=0$ 
  whenever $(F_n)_{n\in \N}\subseteq\fin$ are pairwise disjoint
and $(k_n)_{n\in \N}\subseteq \N$ is strictly increasing.
\end{lemma}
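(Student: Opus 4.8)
The plan is to prove the two implications separately; the forward direction is a one-line estimate, while the reverse direction calls for a sliding-hump construction, which is the substantive part.

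\emph{Forward implication.} Suppose $M=(m_{k,n})_{k,n\in\N}$ is compact, and let $(F_n)_{n\in\N}\subseteq\fin$ be pairwise disjoint and $(k_n)_{n\in\N}\subseteq\N$ strictly increasing. Since the sets $F_n$ are pairwise disjoint, each $i\in\N$ lies in at most one of them, so for every $j\in\N$ all but finitely many of the $F_n$ are contained in $\{i\in\N:i>j\}$. Given $\varepsilon>0$, first choose $j$ with $\|M_j\|<\varepsilon$; then for all sufficiently large $n$ we have $\bigl|\sum_{i\in F_n}m_{k_n,i}\bigr|\le\sum_{i>j}|m_{k_n,i}|\le\|M_j\|<\varepsilon$, whence $\lim_{n\to\infty}\sum_{i\in F_n}m_{k_n,i}=0$. (The hypothesis on $(k_n)$ is not needed here.)

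\emph{Reverse implication.} I would argue by contraposition: assume $M$ is not compact, so that $\delta:=\lim_{j\to\infty}\|M_j\|>0$ — the limit exists because the sequence $(\|M_j\|)_{j\in\N}$ is non-increasing and bounded below, and it is positive because $M$ is not compact. Fix a constant $c=c_\K>0$, depending only on the scalar field, such that every finite family $(z_i)_{i\in F}$ of scalars admits a subset $F'\subseteq F$ with $\bigl|\sum_{i\in F'}z_i\bigr|\ge c\sum_{i\in F}|z_i|$ (for $\K=\R$ one may take $c=\tfrac12$ by separating positive and negative terms; a routine variant using four arcs of the unit circle handles $\K=\C$). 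The key point is the following observation: for every $j_0\in\N$ and every $N\in\N$ there exist $j\ge j_0$ and $k>N$ with $\sum_{n>j}|m_{k,n}|\ge\delta$. Indeed, since $M\in\M$ we have $\sum_{n\in\N}|m_{k,n}|<\infty$ for each $k$, so we may pick $j\ge j_0$ with $\sum_{n>j}|m_{k,n}|<\delta$ for every $k\le N$; as $\|M_j\|\ge\delta$, there is some $k$ with $\sum_{n>j}|m_{k,n}|\ge\delta$, and such a $k$ necessarily satisfies $k>N$. Using this I would recursively construct a strictly increasing sequence $(k_m)_{m\in\N}$, pairwise disjoint finite sets $(F_m)_{m\in\N}$ and numbers $(j_m)_{m\in\N}$ with $j_0=0$, maintaining $F_0\cup\dots\cup F_{m-1}\subseteq\{0,\dots,j_m\}$: at stage $m$, apply the observation with $j_0=j_m$ and $N=k_{m-1}$ (taking $N=0$ when $m=0$) to get $j\ge j_m$ and $k_m>k_{m-1}$ with $\sum_{n>j}|m_{k_m,n}|\ge\delta$; since this series converges, pick a finite $F\subseteq\{n:n>j\}$ with $\sum_{n\in F}|m_{k_m,n}|\ge\delta/2$, then $F_m\subseteq F$ with $\bigl|\sum_{i\in F_m}m_{k_m,i}\bigr|\ge c\sum_{i\in F}|m_{k_m,i}|\ge c\delta/2$, and set $j_{m+1}=\max F_m$. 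As $F_m\subseteq\{n:n>j_m\}$, it is disjoint from $F_0,\dots,F_{m-1}$, so the invariant persists. The resulting sequences satisfy the hypotheses of the displayed condition yet $\bigl|\sum_{i\in F_m}m_{k_m,i}\bigr|\ge c\delta/2$ for all $m$, so $\sum_{i\in F_m}m_{k_m,i}\not\to0$, contradicting the condition.

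The main obstacle lies in the reverse direction, and there are two subtleties to get right: securing that the rows $k_m$ can be chosen strictly increasing, which is exactly the content of the "key observation" and rests on the fact that each individual row of a matrix in $\M$ has summable — hence vanishing-tailed — entries; and upgrading the lower bound on the $\ell_1$-mass $\sum_{i\in F}|m_{k_m,i}|$ to a lower bound on the genuine (signed, or complex) partial sum $\sum_{i\in F_m}m_{k_m,i}$, which is where the scalar-field constant $c_\K$ is needed to absorb possible cancellation.
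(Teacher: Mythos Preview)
Your proof is correct and follows essentially the same sliding-hump strategy as the paper. The only notable difference lies in how you secure a strictly increasing sequence of row indices: you build this in from the outset via your key observation (using that each individual row has summable entries, hence vanishing tails, so rows with large tail mass must have index exceeding any prescribed $N$), whereas the paper first constructs the $F_n$ and $k_n$ without this constraint and then notes that no single row index $k$ can repeat more than about $4\sqrt{2}\lVert M\rVert/\varepsilon$ times (since the pairwise disjoint $F_n$ would otherwise exhaust the $\ell_1$-mass of row $k$), allowing passage to a strictly increasing subsequence. Your route is arguably a bit cleaner; the paper's is marginally shorter.
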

\begin{proof} 
It is clear that if $M$ is compact, then the condition of the lemma
holds because $|\sum_{i\in F_n}m_{k_n, i}|\leq\| M_{\min(F_n)-1}\|$, and $\min(F_n)\to\infty$ as $n\to\infty$ since the sets~$F_n$ are pairwise disjoint.

Conversely, suppose that~$M$ is not compact. Then
there is $\varepsilon>0$
such that $\|M_j\|>\varepsilon$ for infinitely many $j\in \N$.
Using the definition of the norm, one can recursively construct
 pairwise disjoint sets $F_n'\in\fin$ such that there is $k_n\in \N$
 satisfying $\sum_{i\in F_n'}|m_{k_n, i}|>\varepsilon$ for each $n\in \N$. 
 Grouping the elements $i\in F_n'$ in four groups depending on the quadrant of the complex plane in which~$m_{k_n,i}$ lies, we can choose subsets $F_n\subseteq F_n'$ such  that $|\sum_{i\in F_n}m_{k_n, i}|>\varepsilon/(4\sqrt{2})$. 
 The same~$k_n$ cannot repeat itself 
 more than $4\sqrt{2}\|M\|/\varepsilon$ times, so we may assume that the sequence $(k_n)$ is strictly increasing. Thus the condition of the lemma fails.
\end{proof}

\begin{definition}\label{D(M)} Suppose that
$M=(m_{k, n})_{k, n\in \N}\in \M$.
Then  $D(M)= (m_{k, n}')_{k, n\in \N}$, where $m_{k,k}'=m_{k, k}$ for all $k\in \N$
and $m_{k, n}'=0$ whenever $k, n\in \N$ are distinct.
\end{definition}

\begin{lemma}\label{wc-diagonal}
 Suppose that $M=(m_{k,n})_{k, n\in \N}\in \M$ and $\lambda, \lambda'\in \K$. Then:
\begin{enumerate}
\item  $M$ is compact if and only
if $D(M)$ and $M-D(M)$ are compact. 
\item $D(M)$ is compact if and only if
$\lim_{n\in \N}m_{n, n}=0$.
\item If both $M-\lambda I$ and $M-\lambda'I$ are compact, then
$\lambda=\lambda'$.
\end{enumerate}
\end{lemma}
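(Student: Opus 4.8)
The plan is to prove the three equivalences/implications in Lemma~\ref{wc-diagonal} in the order (2), (1), (3), since (2) is the concrete computation, (1) reduces to it plus a triangle-inequality estimate, and (3) follows quickly from (2) applied to the diagonal.

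For part~(2): if $D(M)=(m'_{k,n})$ with $m'_{k,k}=m_{k,k}$ and zeros off the diagonal, then $(D(M))_j$ is just the matrix whose $(k,k)$-entry is $m_{k,k}$ for $k>j$ and which is zero elsewhere, so $\|(D(M))_j\| = \sup\{|m_{k,k}| : k>j\}$. Hence $\lim_j \|(D(M))_j\| = 0$ exactly when $|m_{k,k}| \to 0$ as $k\to\infty$, i.e.\ $\lim_{n\in\N} m_{n,n} = 0$. This is immediate from the definitions of~$\M$ and of compact matrix.

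For part~(1): the forward direction is the subtle one. Suppose $M$ is compact; I want to show $D(M)$ is compact. Given $\varepsilon > 0$, pick $j$ with $\|M_j\| < \varepsilon$; then for $k > j$ we have $|m_{k,k}| \leq \sum_{n > j}|m_{k,n}| \leq \|M_j\| < \varepsilon$ (using that the $(k,k)$-entry appears in the $k$-th row of $M_j$ since $k > j$), so $|m_{k,k}|\to 0$ and $D(M)$ is compact by part~(2); then $M - D(M) = M + (-1)D(M)$ is compact because the compact matrices form a linear subspace of $\M$ (one checks $\|(N+N')_j\| \leq \|N_j\| + \|N'_j\|$ and $\|(\mu N)_j\| = |\mu|\,\|N_j\|$ directly from the definitions, so $\lim_j\|\cdot\|_j$ is subadditive and absolutely homogeneous). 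Conversely, if $D(M)$ and $M - D(M)$ are both compact, then $M = D(M) + (M - D(M))$ is compact by the same linearity remark. Alternatively, for the forward direction one could invoke Lemma~\ref{d-g}: taking singletons $F_n = \{k_n\}$ with $k_n$ strictly increasing shows directly that $m_{k_n,k_n}\to 0$, hence $\lim_{n\in\N} m_{n,n}=0$; I would use whichever is cleaner, but the direct $\|M_j\|$-estimate seems shortest.

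For part~(3): if $M - \lambda I$ and $M - \lambda' I$ are both compact, their difference $(M - \lambda I) - (M - \lambda' I) = (\lambda' - \lambda) I$ is compact by the linearity of the class of compact matrices. But the diagonal of $(\lambda' - \lambda)I$ is constantly $\lambda' - \lambda$, so by part~(2) compactness of $(\lambda'-\lambda)I$ forces $\lim_{n\in\N}(\lambda' - \lambda) = 0$, i.e.\ $\lambda' - \lambda = 0$. I expect the only mild obstacle to be making the ``compact matrices form a linear subspace of $\M$'' observation cleanly, since it is used three times; it is routine from the formula $\|N\| = \sup_k \sum_n |n_{k,n}|$ and the definition $N_j$, so I would state it once as a one-line remark (or absorb it inline) rather than dignify it with a separate lemma. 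Everything else is bookkeeping with the definition of $M_j$ and~$\|\cdot\|$.
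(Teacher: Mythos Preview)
Your proof is correct and follows essentially the same approach as the paper, which simply states that (1) and (2) ``follow directly from the definition of the compactness of a matrix'' and proves (3) exactly as you do via the identity $(M-\lambda I)-(M-\lambda'I)=(\lambda'-\lambda)I$ and part~(2). You have merely filled in the details the paper leaves implicit, including the useful observation that compact matrices form a linear subspace of~$\M$.
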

\begin{proof} 
Items (1) and (2)  follow directly from the definition of the compactness of a matrix.
 Under
the hypothesis of~(3), $(M-\lambda I)-(M-\lambda'I)=(\lambda'-\lambda)I$
is compact, and so $\lambda=\lambda'$ by (2).
\end{proof}

\begin{lemma}\label{half-killing} Suppose that  $M=(m_{k, n})_{k, n\in \N}\in \M$ is not compact.
 Then there is an infinite and coinfinite 
 $A\subseteq\N$ such that $\lim_{k\in\N}(M1_A)(k)$ does not exist. If additionally
 $D(M)$ is a compact matrix, then there is an infinite and coinfinite 
 $A\subseteq\N$ such  that 
 $\lim_{k\in \N\setminus A}(M1_A)(k)$ does not exist.
\end{lemma}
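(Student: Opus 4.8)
The plan is to exploit Lemma~\ref{d-g}, which gives a concrete combinatorial witness for non-compactness: there are pairwise disjoint finite sets $F_n$ and a strictly increasing sequence $(k_n)$ with $\bigl|\sum_{i\in F_n}m_{k_n,i}\bigr|>\delta$ for some $\delta>0$ and all $n$. Since the $F_n$ are pairwise disjoint, the set $A=\bigcup_{n\in\N}F_n$ is infinite. The idea is that $(M1_A)(k_n)$ is essentially $\sum_{i\in F_n}m_{k_n,i}$ up to a contribution from the other $F_m$'s, and by passing to a subsequence we will arrange that this contribution is negligible while the ``main term'' $\sum_{i\in F_n}m_{k_n,i}$ stays bounded away from~$0$; this prevents $\lim_{k\in\N}(M1_A)(k)$ from being~$0$, whereas along the complement of $A$ (or along a suitable subset) we can force the values to approach~$0$, so the full limit cannot exist.

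\smallskip

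\textbf{Key steps.} First I would fix $\delta>0$, pairwise disjoint $F_n\in\fin$ and strictly increasing $(k_n)$ with $\bigl|\sum_{i\in F_n}m_{k_n,i}\bigr|>\delta$, as provided by Lemma~\ref{d-g}. Next, using $M\in\M$ and hence $\sum_{i\in\N}\lvert m_{k,i}\rvert\le\lVert M\rVert<\infty$ for each fixed~$k$, I would recursively thin out the sequence: having chosen $k_{n_0}<k_{n_1}<\dots<k_{n_{j-1}}$, pick $n_j$ large enough that (a) $\sum_{i\in F_{n_j}}\sum_{\ell<j}\lvert m_{k_{n_\ell},i}\rvert<\delta\cdot 2^{-j-3}$ (possible because the tail $\bigcup_{m\ge n_j}F_m$ lies beyond any fixed point, and the finitely many rows $k_{n_\ell}$, $\ell<j$, are $\ell_1$), and simultaneously (b) the row $k_{n_j}$ contributes little to the earlier blocks: $\sum_{\ell<j}\bigl|\sum_{i\in F_{n_\ell}}m_{k_{n_j},i}\bigr|$ is small — actually the cleaner route is to first extract a subsequence making the \emph{total} off-block mass $\sum_{m\ne\text{(the chosen index)}}\sum_{i\in F_m}\lvert m_{k,i}\rvert$ summably small along the chosen rows, which is a standard diagonal/gliding-hump argument. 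After relabelling, set $A=\bigcup_j F_{n_j}$, an infinite subset of~$\N$. If $A$ happens not to be coinfinite, note $\N\setminus A$ is finite; but then only finitely many columns lie outside~$A$, and one can instead throw away finitely many of the $F_{n_j}$ (they only carry finitely many columns) or shrink one $F_{n_j}$ using the four-quadrant trick of Lemma~\ref{d-g} to free infinitely many columns — so we may assume $A$ is coinfinite. Then $(M1_A)(k_{n_j})=\sum_{i\in F_{n_j}}m_{k_{n_j},i}+\varepsilon_j$ with $\lvert\varepsilon_j\rvert$ small, so $\limsup_k\lvert(M1_A)(k)\rvert\ge\delta/2>0$; hence $\lim_{k\in\N}(M1_A)(k)$, if it exists, is nonzero. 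To finish the first statement, I would also produce a subsequence of rows along which $(M1_A)(k)\to0$: pick any row index $k$ avoiding all the $k_{n_j}$ and with $\sum_{i\in A}\lvert m_{k,i}\rvert$ small — such rows exist in abundance because, for each fixed $j$, only finitely many rows can have $\sum_{i\in F_{n_j}}\lvert m_{k,i}\rvert\ge\delta/(2\lVert M\rVert)\cdot(\text{something})$ while keeping $\lVert M\rVert<\infty$; more carefully, since $\sum_{i\in A}\lvert m_{k,i}\rvert\le\lVert M\rVert$ uniformly, the averaging/pigeonhole over a long window of rows forces infinitely many rows with $\sum_{i\in A}\lvert m_{k,i}\rvert<\delta/2$. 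Along those rows $(M1_A)(k)\to0$, so combined with the previous subsequence the limit $\lim_{k\in\N}(M1_A)(k)$ does not exist.

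\smallskip

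\textbf{The additional hypothesis.} When $D(M)$ is compact, i.e.\ $\lim_n m_{n,n}=0$ by Lemma~\ref{wc-diagonal}(2), I want $\lim_{k\in\N\setminus A}(M1_A)(k)$ to fail to exist — that is, I must arrange the bad rows $k_{n_j}$ to lie \emph{outside}~$A$. The point is that $k_{n_j}$ need not belong to $F_{n_j}$; the only ``collision'' risk is that $k_{n_j}\in F_{n_j}$ (or $\in F_{n_\ell}$ for some~$\ell$). Since $D(M)$ compact means the diagonal entry $m_{k_{n_j},k_{n_j}}\to0$, I can simply delete the single index $k_{n_j}$ from $F_{n_j}$: this changes $\sum_{i\in F_{n_j}}m_{k_{n_j},i}$ by at most $\lvert m_{k_{n_j},k_{n_j}}\rvert\to0$, so after discarding finitely many blocks the bound $\bigl|\sum_{i\in F_{n_j}\setminus\{k_{n_j}\}}m_{k_{n_j},i}\bigr|>\delta/2$ survives, and now $k_{n_j}\notin A$. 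Similarly remove from each $F_{n_j}$ the finitely many indices $k_{n_\ell}$ with $\ell<j$; this only strengthens the off-block smallness (a), since we are deleting columns. After this modification $\{k_{n_j}:j\in\N\}\subseteq\N\setminus A$, and restricting the row index to $\N\setminus A$ we still see the values oscillating between (near) a nonzero value along the $k_{n_j}$ and (near)~$0$ along a thinner subsequence chosen as before but now with rows drawn from $\N\setminus A$ — again available by pigeonhole since $A$ is coinfinite. Hence $\lim_{k\in\N\setminus A}(M1_A)(k)$ does not exist.

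\smallskip

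\textbf{Main obstacle.} The delicate point is the bookkeeping that simultaneously controls (i) the off-block contamination $\sum_{m\ne j}\sum_{i\in F_m}\lvert m_{k_{n_j},i}\rvert$ to the chosen rows, (ii) the existence of infinitely many ``quiet'' rows with $\sum_{i\in A}\lvert m_{k,i}\rvert$ small, and, in the second part, (iii) the requirement $k_{n_j}\notin A$ while keeping everything infinite and (in the first part) $A$ coinfinite. Each ingredient is a routine gliding-hump/pigeonhole step, but they must be interleaved in the right order — extract the row subsequence first to get summable contamination, then define $A$, then harvest quiet rows from the (coinfinite) complement — and the compactness of $D(M)$ is exactly what is needed to detach the diagonal index $k_{n_j}$ from its block in the second case.
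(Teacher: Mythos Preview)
Your overall plan (invoke Lemma~\ref{d-g}, recursively thin the blocks to control cross--terms, and build $A$ from the surviving blocks) is close to what the paper does, but there is a genuine gap in the step where you produce ``quiet rows''.

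You take $A=\bigcup_j F_{n_j}$ using \emph{all} selected blocks and then try to find infinitely many rows $k$ with $\sum_{i\in A}\lvert m_{k,i}\rvert<\delta/2$. The justification you give --- ``for each fixed $j$, only finitely many rows can have $\sum_{i\in F_{n_j}}\lvert m_{k,i}\rvert\ge c$ while keeping $\lVert M\rVert<\infty$'', followed by an ``averaging/pigeonhole'' claim --- is simply false: the bound $\lVert M\rVert<\infty$ is a row constraint and says nothing about how many rows place mass on a fixed set of columns. For instance, if some column~$i_0$ satisfies $m_{k,i_0}=1$ for every $k$ and $i_0\in A$, then $\sum_{i\in A}\lvert m_{k,i}\rvert\ge 1$ for all $k$, so no quiet rows exist. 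This also shows that your recursive condition~(b) need not be attainable when columns fail to converge to~$0$, a case you never isolate.

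The paper avoids this problem in two ways. First, it splits into cases according to whether every column of $M$ converges, and if so whether all column limits are~$0$; when they are not, it subtracts the compact ``limit matrix'' $M'$ with constant columns $\lambda_n=\lim_k m_{k,n}$, reducing to the zero--limit case. Second --- and this is the idea you are missing --- in the zero--limit case it takes $A=\bigcup_j F_{n_{2j}}$ using only the \emph{even}--indexed blocks. Then the rows $k_{n_{2j+1}}$ coming from the \emph{omitted} odd blocks are the ready--made quiet rows: conditions (1) and (2) in the recursion force $\lvert(M1_A)(k_{n_{2j+1}})\rvert<2\varepsilon/5$, while $\lvert(M1_A)(k_{n_{2j}})\rvert>3\varepsilon/5$, so the limit fails to exist without any appeal to pigeonhole. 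Your treatment of the second part (deleting $k_{n_j}$ from $F_{n_j}$ using $m_{k,k}\to 0$) matches the paper's, but it inherits the same gap about quiet rows; once you adopt the even/odd split, that part goes through as well.
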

\begin{proof} 
First we will note that if the columns of $M$, that is, the sequences
$M1_{\{n\}}$, do not all converge, then we get the stronger conclusion
of the second part of the lemma.  Indeed, suppose that $M1_{\{n\}}$
does not converge for some $n\in\N$. Let $A\subseteq\N$ be infinite
and coinfinite such that $\lim_{k\in \N\setminus A} (M1_{\{n\}})(k)$
does not exist and $n\not\in A$. If $\lim_{k\in \N\setminus
  A}(M1_A)(k)$ does not exist, we are done.  Otherwise the set $A' = A\cup\{n\}$ has the required property because if $\lim_{k\in \N\setminus
  A}(M1_A)(k)$ and $\lim_{k\in \N\setminus
  A'}(M1_{A'})(k)$  both exist, then so does 
$$\lim_{k\in \N\setminus A'}(M1_{\{n\}})(k) = \lim_{k\in \N\setminus A'}(M1_{A'})(k) - \lim_{k\in \N\setminus A'}(M1_{A })(k),$$
which is a contradiction.

Next we will prove the lemma under the additional hypothesis that each
column of $M$ converges to $0$. By Lemma \ref{d-g} and the hypothesis
of the lemma, we can find pairwise disjoint sets $(F_n)_{n\in
  \N}\subseteq \fin$, a strictly increasing sequence $(k_n)_{n\in
  \N}\subseteq\N$ and $\varepsilon>0$ such that $|\sum_{i\in
  F_n}m_{k_n, i}|>\varepsilon$ for every $n\in \N$.  By recursion on
$j\in\N$, we can construct a strictly increasing sequence $(n_j)_{j\in
  \N}\subseteq \N$ such that
\begin{enumerate}
\item $\sum_{j'<j}\sum_{i\in F_{n_{j'}}} \lvert m_{k_{n_j}, i}\rvert<\frac{\varepsilon}{5}$ for every $j\in\N$, and
\item $\lvert\sum_{i\in F_{n_j}} m_{k_{n_{j'}}, i}\rvert<\frac{\varepsilon}{5\cdot2^j}$ for every $j'<j$.
\end{enumerate}
At stage $j$ of the recursion, use the fact that the columns of $M$
converge to $0$ to sa\-tis\-fy~(1), and use the fact that the sets~$F_n$
are pairwise disjoint and the rows of~$M$ are absolutely summable to
satisfy~(2). Let $A=\bigcup_{j\in \N} F_{n_{2j}}$.  By the choice of
the sets~$F_n$ and (1) and (2), we have $|(M1_A)(k_{n_{2j}})|>
\varepsilon-2\varepsilon/5$ and $|(M1_A)(k_{n_{2j+1}})|<
2\varepsilon/5$ for every $j\in \N$. Hence $M1_A$ does not converge.

Now to prove the second part of the lemma under the additional
hypothesis that each column converges to $0$, first note that the
hypothesis of the second part means that $\lim_{k\in \N} m_{k, k}=0$
by Lemma \ref{wc-diagonal}. In this case the recursive
construction above can be modified to yield an infinite and coinfinite
set $A\subseteq\N$ such that $\lim_{k\in \N\setminus A}(M1_A)(k)$ does
not exist by replacing the set~$F_n$ with
$F_n'=F_n\setminus\{k_n\}$ for every $n\in\N$ in the recursion requirements~(1) and~(2) and adding two more conditions:
\begin{enumerate}
  \setcounter{enumi}{2}
\item $\{k_{n_{j'}}: j'<j\}\cap F_{n_j}'=\emptyset$, and 
\item $k_{n_j}\notin \bigcup_{j'<j}F_{n_{j'}}'$ for every $j\in\N$. 
\end{enumerate}
The possibility of satisfying the modified (1) and (2) follows from
the hypothesis that $\lim_{k\in \N} m_{k, k}=0$, while (3) and (4) can
be satisfied by choosing~$n_j$ sufficiently large, using that the
sets~$F_n$ are pairwise disjoint. With such a modified construction we
obtain $\{k_{n_j}: j\in \N\}\subseteq \N\setminus A$, and so $\lim_{k\in
  \N\setminus A}(M1_A)(k)$ does not exist.

Finally let us prove the first part of the lemma in the general case
when all columns of $M$ converge, but not necessarily to $0$.  Let
$\lambda_n=\lim_{k\in \N}(M1_{\{n\}})(k)$, and
define $M'=(m_{k, n}')_{k, n\in \N}$ by $m_{k, n}'=\lambda_n$ for every $k,n\in\N$. Note that \mbox{$M'\in \M$} as $\sum_{n<i}|\lambda_n|$ can be
approximated by $\sum_{n<i}|m_{k, n}|\leq \|M\|<\infty$ for any
$i\in\N$ and sufficiently large $k\in \N$. Hence we have $M-M'\in\M$ as
well. Moreover, the summability of $\sum_{n\in\N}|\lambda_n|$ implies
that~$M'$ is compact by the definition of com\-pact\-ness.
We may now apply the previous case, where the columns
of the matrix converge to~$0$, to the matrix $M''=M-M'$, which is noncompact since $M$ is noncompact and $M'$ is compact. Hence  there exists 
 an infinite and coinfinite  set
 $A\subseteq\N$ such that $\lim_{k\in \N}(M''1_A)(k)$ does not exist.
The definition of $M'$ implies  that $M'f$ is a constant sequence,
 and thus convergent, for every $f\in \ell_\infty$. Therefore, 
 using that $(M1_A)(k)=(M'1_A)(k)+(M''1_A)(k)$, we conclude that $\lim_{k\in \N}(M1_A)(k)$ does not exist, as required.
 The argument for the second part of the lemma is analogous.
 One needs to note that $D(M'')$ is compact if $D(M)$ is.
 This follows from the above fact that
 $M'$ is compact, and so $D(M')$ is compact by Lemma \ref{wc-diagonal}.
\end{proof}

Recall  Definitions~\ref{matrices}, \ref{admits-M} and~\ref{compact-def}. 

\begin{lemma}\label{acceptance}
Suppose that $M\in\M$ and that $\A\subseteq\sub$.
\begin{enumerate}
\item \emph{(Admission)} If $\A$  accepts $M$ and no element of $\A$  undermines $M$,
then $\A$ admits $M$.
\item \emph{(Monotonicity)} If $\A$  accepts $M$, then
$B$  accepts $M$ for every $B\in\sub$ which is included in a finite union
of elements of $\A$.
\item \emph{(Decidability)} If $A\in\sub$ does not  accept $M-\lambda I$ for any $\lambda\in \K$, 
then there is an infinite $B\subseteq A$ such that $B$ rejects $M$.
\item \emph{(Amalgamation)}
If, for every $A, A'\in\A$, there is $\lambda_{A,A'}\in \K$ such that
$A\cup A'$  accepts $M-\lambda_{A,A'} I$, then there is $\lambda\in\K$
such that $\A$  accepts $M-\lambda I$.
\end{enumerate}
\end{lemma}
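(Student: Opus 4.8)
The plan is to treat the four assertions essentially separately, after one common preparatory move. For $A=\{a_0<a_1<\cdots\}\in\sub$ let $\widetilde M^A$ denote the \emph{compression} $(m_{a_i,a_j})_{i,j\in\N}$. I will use throughout the routine facts that $\widetilde M^A\in\M$, that $\widetilde{M-\lambda I}^A=\widetilde M^A-\lambda I$, that $A$ accepts $M$ exactly when $\widetilde M^A$ is a compact matrix, and that for $B\subseteq A$ the statement ``$B$ rejects $M$'' coincides with ``$C$ rejects $\widetilde M^A$'', where $C\subseteq\N$ is the set of indices of $B$ inside $A$ and, for $N\in\M$, ``$C$ rejects $N$'' means that $\lim_{i\in C}(N1_C)(i)$ does not exist. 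I will also use the obvious monotonicity ``if $C\subseteq D$ and $D$ accepts $M$, then $C$ accepts $M$'', which holds because $\|(M^C)_j\|\le\|(M^D)_j\|$ for every $j$.

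For \textbf{Monotonicity}, by the last remark it suffices to see that a finite union $A_1\cup\cdots\cup A_m$ of members of $\A$ accepts $M$. For this I would write $N_{i,j}$ for $M$ cut down to rows in $A_i$ and columns in $A_j$; each $N_{i,j}$ is supported inside, and agrees on its support with, $M^{A_i\cup A_j}$, hence is compact because $A_i\cup A_j$ accepts $M$; and since every nonzero entry of $M^{A_1\cup\cdots\cup A_m}$ equals the corresponding entry of some $N_{i,j}$, one gets $\|(M^{A_1\cup\cdots\cup A_m})_l\|\le\sum_{i,j}\|(N_{i,j})_l\|\to0$. For \textbf{Admission}, fix $A,A'\in\A$; then $A\cup A'$ accepts $M$, so $N:=M^{A\cup A'}$ is compact and $(M1_A)(k)=(N1_A)(k)$ for $k\in A'$. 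Given $\varepsilon>0$, compactness supplies $j_0$ with $\sum_{n>j_0}|N_{k,n}|<\varepsilon$ for all $k$; then for $k\in A'$ we have $(M1_A)(k)=\sum_{n\in A,\,n\le j_0}m_{k,n}+\theta_k$ with $|\theta_k|<\varepsilon$, and the finite sum tends to $0$ along $A'$ because $A'$ does not undermine $M$, i.e.\ $(m_{k,n})_{k\in A'}\to0$ for each fixed $n$; letting $\varepsilon\downarrow0$ gives $\lim_{k\in A'}(M1_A)(k)=0$.

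For \textbf{Amalgamation}, Lemma~\ref{wc-diagonal} shows that $A\cup A'$ accepts $M-\lambda I$ if and only if $(M-D(M))^{A\cup A'}$ is compact and $\lim_{k\in A\cup A'}m_{k,k}=\lambda$; in particular $\lambda_{A,A'}$ is forced to equal $\lim_{k\in A\cup A'}m_{k,k}$, and this limit exists. Restricting that limit to an infinite subset shows $\lambda_{A,A'}=\lim_{k\in A}m_{k,k}$ depends on $A$ only, and symmetrically on $A'$ only; hence $\lim_{k\in A}m_{k,k}$ is one and the same constant $\lambda$ for every $A\in\A$, and $\lambda_{A,A'}=\lambda$ for all $A,A'$. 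Thus $A'\cup A''$ accepts $M-\lambda I$ for all $A',A''\in\A$, as wanted (the cases $|\A|\le1$ being trivial).

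\textbf{Decidability} is the substantial part. Passing to $N:=\widetilde M^A$, the hypothesis becomes: $N-\lambda I$ is compact for no $\lambda$, which by Lemma~\ref{wc-diagonal} means that either (a) $N-D(N)$ is not compact, or (b) the diagonal $(n_{k,k})_k$ does not converge; I want an infinite $C\subseteq\N$ rejecting $N$. My plan is a recursive construction of $C=\{c_0<c_1<\cdots\}$ modelled on the proof of Lemma~\ref{half-killing}: first thin the ambient index set (each column and the diagonal of $N$ being bounded) so that along it every column and the diagonal converge, with the column limits absolutely summable by the same computation as in Lemma~\ref{half-killing}; then pick the $c_m$ sparsely so that, by absolute summability of the rows, the ``future'' cross terms $\sum_{l>m}n_{c_m,c_l}$ are small while the ``past'' cross terms $\sum_{l<m}n_{c_m,c_l}$ converge as $m\to\infty$. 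In case (b) one interleaves two subsequences realising two distinct diagonal limits, so that $(N1_C)(c_m)$ acquires two distinct subsequential limits; in case (a) one first feeds $N-D(N)$, which has zero diagonal and hence compact $D$, into Lemma~\ref{half-killing} to locate the oscillation and then transplants it into the recursion. The hard point — and the place needing real care — is that the non-convergence delivered by Lemma~\ref{half-killing} comes packaged with the row-index set and the column support being \emph{complementary}, whereas ``rejection'' insists that the single set $C$ play both roles; bridging this is exactly what forces the preliminary taming of the columns and, when the two natural subsequential limits of $(N1_C)(c_m)$ happen to coincide, a correction obtained by deleting a suitable column index from $C$ — the strict inequality of the two diagonal limits in (b), or genuine non-compactness in (a), ensuring that some such correction separates the limits.
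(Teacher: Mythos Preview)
Your arguments for Admission, Monotonicity and Amalgamation are correct and essentially match the paper's (your Amalgamation via Lemma~\ref{wc-diagonal}(1)--(2) differs only cosmetically from the paper's use of Monotonicity together with Lemma~\ref{wc-diagonal}(3)).

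Decidability, however, remains a sketch, and the paper's route is both simpler and sidesteps the difficulty you flag as ``the hard point''. The paper splits on whether $M^A-D(M^A)$ is compact. When it is \emph{not}, apply the second part of Lemma~\ref{half-killing} (relativised to~$A$) to $M^A-D(M^A)$, whose diagonal part vanishes, to obtain an infinite, coinfinite $B'\subseteq A$ with $\lim_{k\in A\setminus B'}(M^A1_{B'})(k)$ not existing (note $(D(M^A)1_{B'})(k)=0$ for $k\in A\setminus B'$). The identity $1_{B'}+1_{A\setminus B'}=1_A$ then supplies the bridge from complementary sets to a single rejecting set that you were looking for: if $\lim_{k\in A\setminus B'}(M^A1_{A\setminus B'})(k)$ also fails, take $B=A\setminus B'$; otherwise $\lim_{k\in A\setminus B'}(M^A1_A)(k)$ fails, hence so does $\lim_{k\in A}(M^A1_A)(k)$, so $B=A$ rejects~$M$. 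No recursion, no ``transplanting'', no column deletion is needed. When $M^A-D(M^A)$ \emph{is} compact, the sequence $(m_{k,k})_{k\in A}$ cannot converge (else $A$ would accept $M-\lambda I$ for the limit~$\lambda$); choose disjoint infinite $A_1,A_2\subseteq A$ with diagonal limits $\lambda_1\ne\lambda_2$, pick $j$ with $\|(M^A-D(M^A))_j\|<|\lambda_1-\lambda_2|/3$, and set $B=(A_1\cup A_2)\setminus\{0,\dots,j\}$: then $|(M1_B)(k)-m_{k,k}|<|\lambda_1-\lambda_2|/3$ for every $k\in B$, so $(M1_B)(k)$ inherits two separated subsequential limits. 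Again compactness does all the work, with no recursion.

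Your recursive plan may be salvageable, but as written it is not a proof: the ``correction obtained by deleting a suitable column index'' is asserted rather than established, and in your case~(b) you never secure the compactness of $N-D(N)$ that would make the off-diagonal cross terms uniformly small (you could of course assume it, since its failure puts you in case~(a) --- which is precisely the paper's case split).
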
 
\begin{proof}
(Admission) Fix $A, A'\in\A$. For every $f\in\ell_\infty$
and  $j\in \N$, we have
\[ (M^{A\cup A'}-M_j^{A\cup A'})f=\sum_{n\leq j}f(n)M^{A\cup A'}1_{\{n\}}, \]
which belongs to~$c_0$ 
because
the hypothesis that $A$ and $A'$ do not undermine $M$ implies that
$\lim_{k\in A\cup A'}(M1_{\{n\}})(k)=0$ for every $n\in\N$.
Since
$$M^{A\cup A'}f=(M^{A\cup A'}-M_j^{A\cup A'})f+M_j^{A\cup A'}f,$$
where $\|M^{A\cup A'}_jf\|\leq\|M^{A\cup A'}_j\|\|f\|\to 0$ as $j\to\infty$ by the compactness of $M^{A\cup A'}$,
we deduce that $M^{A\cup A'}f$ can be approximated in the supremum norm
by elements of~$c_0$.  This shows that
 $M^{A\cup A'}f\in c_0$ because~$c_0$ is a closed subspace of $\ell_\infty$.
In particular, choosing $f=1_A$, we have
$$0=\lim_{k\in A'}(M^{A\cup A'}1_A)(k)=\lim_{k\in A'}(M1_A)(k).$$
Since $A$ and $A'$ were arbitrary, this  proves that $\A$ admits $M$.
 
\noindent (Monotonicity) Let $A_1,\ldots, A_l\in\A$. The compactness of each matrix $M^{A_i\cup A_{i'}}$,
which follows from the hypothesis that $\A$ accepts $M$, means that
$$\lim_{j\in\N}\bigr(\sup_{k\in A_i}\sum_{n\in A_{i'},\, n>j} |m_{k, n}|\bigr)=0$$
 for every $1\leq i,i'\leq l$.
But this implies that, for every $B\subseteq A_1\cup\cdots\cup A_l$, we have
$$\lim_{j\in\N}\big(\sup_{k\in B}\sum_{n\in B,\, n>j} |m_{k, n}|\big)=0,$$
which shows that $M^B$ is compact, as required.

\noindent (Decidability) First assume that $M^A-D(M^A)$ is not a compact matrix.  Then, since
$D(M^A-D(M^A))$ is the zero matrix, we can apply a version of the second part
of Lemma \ref{half-killing} with $\N$ replaced with~$A$
 to find an infinite and coinfinite set $B'\subseteq A$ such that
$$\lim_{k\in A\setminus B'}\bigl((M^A-D(M^A))1_{B'}\bigr)(k)=
\lim_{k\in A\setminus B'}(M^A1_{B'})(k)$$ 
does not exist. If 
$\lim_{k\in A\setminus B'}(M^A1_{A\setminus B'})(k)$ 
does not exist, the conclusion follows by taking $B=A\setminus B'$. Otherwise, as $1_{B'}+1_{A\setminus B'}=1_A$, we
conclude that
$\lim_{n\in A\setminus B'}(M^A1_A)(n)$ does not exist, which implies
that $\lim_{n\in A}(M^A1_A)(n)$ does not exist, i.e., $B=A$ rejects $M$.

Now suppose that $M^A-D(M^A)$ is compact.
First we note that $(m_{k, k})_{k\in A}$ does not converge.
If it converged to $\lambda\in \K$, then
$D(M^A)-\lambda I^A$ would be compact by Lemma \ref{wc-diagonal}(2)
and so the identity
$$(M-\lambda I)^A= (M^A-D(M^A))+(D(M^A)-\lambda I^A)$$
would contradict the hypothesis that $A$ does not accept $M-\lambda I$ for
any $\lambda\in \K$.

Hence we can find two infinite, disjoint subsets $A_1$ and $A_2$ of~$A$
such that $(m_{k, k})_{k\in A_1}$ and $(m_{k, k})_{k\in A_2}$ converge
to distinct limits $\lambda_1$ and~$\lambda_2$, respectively. Let
$\varepsilon=|\lambda_1-\lambda_2|>0$.  Since $M^A-D(M^A)$ is compact,
there is $j\in \N$ such that $\|(M^A-D(M^A))_j\|<\varepsilon/3$.  Let
$B=(A_1\cup A_2)\setminus\{0,\ldots, j\}$.  Then
$$|(M1_B)(k)-m_{k, k}|=|\big((M^A-D(M^A))1_{B}\big)(k)|<\varepsilon/3$$ 
for every $k\in B$, so $\lim_{k\in B}(M1_B)(k)$ does not exist, as required. 

\noindent (Amalgamation) We need to find $\lambda\in\K$ such that $(M-\lambda I)^{A\cup A'}$
 is compact when\-ever $A, A'\in \A$. For every pair $A,A'\in\A$, we can choose $\lambda_{A, A'}\in \K$
  such that $(M-\lambda_{A, A'} I)^{A\cup A'}$ is compact  by the hypothesis.
 Monotonicity implies that \mbox{$(M-\lambda_{A, A'}I)^A$} and \mbox{$(M-\lambda_{A, A''}I)^A$} are
 compact for any $A, A', A''\in \A$, so $\lambda_{A, A'}=\lambda_{A, A''}$ by
  Lemma \ref{wc-diagonal}(3).  Hence $\lambda_{A, A'}=\lambda_{A, A''}=\lambda_{A''', A''}$
  for any $A, A', A'', A'''\in \A$, so all of these numbers have the same value $\lambda$,
  as required.
\end{proof}

Many of the results of this section 
can be interpreted in the language of bounded linear operators on Banach spaces rather than matrices. In fact
this was our original motivation. As we already mentioned on page~\pageref{matricesasops}, elements of $\M$ are exactly the matrices which
represent bounded linear operators from $c_0$ into $\ell_\infty$, or the adjoints
of operators in $\BB(\ell_1)$, that is, the
weak$^*$-continuous operators on~$\ell_\infty$. Here and below we use the dualities $c_0^*=\ell_1$ and
$\ell_1^*=\ell_\infty$. Compact matrices define exactly compact operators
from $c_0$ into $\ell_\infty$.
Using the Schur property of $\ell_1$, one can easily show that an operator
defined on $c_0$ is compact if an only if it is weakly compact. 

The condition of Lemma \ref{d-g} is the classical
Dieudonn\'{e}--Grothendieck characterization of weakly compact subsets
of the dual of a $C(K)$-space (see, e.g., Sec\-tion~5.3
of~\cite{albiac-kalton}).  Lemma \ref{half-killing} and the
Decidability of Lemma \ref{acceptance} are more specific versions of
the result~\cite{gdiestel} which says that a non-weakly compact
operator on a Grothendieck space ($\ell_\infty$~in this case) cannot
have separable range, so in particular its range cannot be contained
in the space~$c$ of convergent sequences. The Admission of
Lemma~\ref{acceptance} corresponds to the result which says that
$T^{**}[\mathcal{X}^{**}]\subseteq \mathcal{X}$ for any weakly compact
operator~$T$ on a Banach space $\mathcal{X}$ (see, e.g., Appendix~G
of~\cite{albiac-kalton}).  Here $\mathcal{X}=c_0$ and
$\mathcal{X}^{**}=\ell_\infty$, and one needs to note that applying the
same matrix from $\M$ to elements of $c_0$ and elements of
$\ell_\infty$ corresponds to the operator~$T$ and its second
adjoint~$T^{**}$, respectively. The Monotonicity of Lemma
\ref{acceptance} corresponds to the fact that (weakly) compact
operators form an ideal.

We opted for giving direct proofs in terms of $\N\times\N$ matrices
because in any case Lemma \ref{wc-diagonal} and the concrete
conditions of Lemma \ref{acceptance} require us to work with the
combinatorial structure of matrices.  Also the above mentioned
operator theoretic results require quite substantial abstract
preparations before they can be applied in our setting. This is not
necessary as the facts we have proved above are fairly elementary and,
as we have seen, can essentially be deduced from elementary properties
of the summability of infinite series.

\section{Borel structure, matrices and almost disjoint families}\label{borel-section}
\noindent
In this section we link $\K$-valued $\N\times\N$ matrices and almost
disjoint families in $\sub$ with the Borel structure of $\can$ and
$\can\times \can$. Here $\can$ is considered with the product
topology, so in particular the sets of the form $[s]=\{p\in \can: s\subseteq p\}$,
where $s$ is a finite partial function from $\N$ into $2=\{0,1\}$,
form a basis for the topology of~$\can$ consisting of clopen sets. 

Our aim is to transform the following cardinality dichotomy for
Borel sets due to Alexandrov and Hausdorff into the dichotomy for acceptance and rejection (Lemma~\ref{dichotomy-M}).

\begin{lemma}[A dichotomy for Borel sets]\label{borel-ch} If
$X\subseteq \can$ is Borel, then either $X$ is countable or $X$ has cardinality $\mathfrak c$.
\end{lemma}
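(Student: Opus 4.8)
The plan is to establish the perfect set property for Borel subsets of $\can$: if $X\subseteq\can$ is Borel and uncountable, then $X$ contains a topological copy of $\can$, hence $|X|\geq\cc$, and since trivially $|X|\leq|\can|=\cc$ this gives $|X|=\cc$. As a set that is not uncountable is countable, the stated dichotomy follows. The work splits into an elementary closed-set case and a reduction of the general Borel case to it.

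For a closed set $F\subseteq\can$ I would run the Cantor--Bendixson analysis: set $F^{(0)}=F$, let $F^{(\alpha+1)}$ be the set of points of $F^{(\alpha)}$ that are not isolated in the subspace $F^{(\alpha)}$, and $F^{(\lambda)}=\bigcap_{\alpha<\lambda}F^{(\alpha)}$ at limit stages. This is a decreasing transfinite sequence of closed sets; since $\can$ is second countable, the ordinals $\alpha$ with $F^{(\alpha)}\supsetneq F^{(\alpha+1)}$ inject into a countable basis, so the sequence stabilizes at a countable ordinal $\alpha_0$. Write $F^{(\infty)}$ for the stable value (the perfect kernel). For each $\alpha$, the relatively isolated points forming $F^{(\alpha)}\setminus F^{(\alpha+1)}$ again inject into a countable basis and so are countably many; since $\alpha_0$ is countable, $F\setminus F^{(\infty)}$ is countable. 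If $F^{(\infty)}=\emptyset$ then $F$ is countable. Otherwise $F^{(\infty)}$ is a nonempty closed set with no isolated points, and a Cantor scheme embeds $\can$ into it: by recursion on the length of $s\in\seq$ one picks nonempty relatively clopen $U_s\subseteq F^{(\infty)}$ with $U_\emptyset=F^{(\infty)}$, with the sets indexed by the two one-step extensions of $s$ being disjoint nonempty subsets of $U_s$, and with diameters tending to $0$ along each branch; this succeeds precisely because every point of $F^{(\infty)}$ accumulates other points of $F^{(\infty)}$. Then $x\mapsto$ the unique point of $\bigcap_n U_{x|n}$ is a continuous injection of $\can$ into $F^{(\infty)}$, so $|F^{(\infty)}|=\cc$ and hence $|F|=\cc$.

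For an arbitrary Borel set $X\subseteq\can$, I would invoke the classical representation theorem that every Borel subset of a Polish space is the image of a continuous \emph{injection} defined on a closed subset of Baire space $\N^{\N}$ (see, e.g., Kechris, \emph{Classical Descriptive Set Theory}). Fix such a closed $C\subseteq\N^{\N}$ and a continuous injection $g$ with $g[C]=X$. If $X$ is uncountable then $C$ is uncountable, and the Cantor--Bendixson argument above applies verbatim in the Polish, second countable space $\N^{\N}$, producing a nonempty perfect subset of $C$, hence a homeomorphic copy $\widetilde C\cong\can$ inside $C$. Then $g|\widetilde C$ is a continuous injection of $\can$ into $X$, so $|X|=\cc$.

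The only genuinely nonelementary ingredient is this last reduction — the representation of Borel sets as injective continuous images of closed subsets of Baire space, equivalently the fact that uncountable analytic sets contain perfect sets — so this is the step I expect to be the main obstacle in a fully self-contained treatment. The natural shortcut, namely proving directly that the subsets of $\can$ which are countable or contain a perfect set form a $\sigma$-algebra containing the closed sets, fails at closure under complementation, which is why the standard proof passes through analytic sets; in the paper one simply cites this as the classical Alexandrov--Hausdorff dichotomy.
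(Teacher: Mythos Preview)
Your proposal is correct; the paper itself simply cites Kechris's \emph{Classical Descriptive Set Theory} (18.6) without giving any argument, so your sketch of the Cantor--Bendixson analysis plus the reduction via the representation of Borel sets as injective continuous images of closed subsets of Baire space is exactly the standard proof one finds behind such a citation.
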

\begin{proof} See 18.6 of \cite{kechris}.
\end{proof} 

Lemma~\ref{dichotomy-M} will apply to Borel
almost disjoint families, so first we need the following result:

\begin{lemma}\label{borel-ad} There is an uncountable,
almost disjoint family $\A\subseteq\sub$ such that the set $\{1_A: A\in \A\}\subseteq \can$ is closed.
\end{lemma}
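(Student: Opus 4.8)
The plan is to exhibit a concrete Borel, indeed closed, almost disjoint family by coding infinite subsets of $\N$ as branches of the binary tree $\seq$. First I would fix a bijection $\sigma\colon\seq\to\N$ between the countable set of finite binary sequences and $\N$; this lets me identify subsets of $\seq$ with subsets of $\N$ and, in particular, the power set $2^{\seq}$ with $\can$ as topological spaces. For each $x\in\can$ (thought of as an infinite branch, i.e.\ an element of Cantor space in its role as the set of infinite $0$--$1$ sequences) let $b_x=\{x|n : n\in\N\}\subseteq\seq$ be the set of all finite initial segments of $x$, and set $A_x=\sigma[b_x]\in\sub$. The family will be $\A=\{A_x : x\in\can\}$.

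Next I would verify the two required properties. Almost disjointness is immediate from the tree structure: if $x\ne y$ are distinct branches, they split at some finite level $n$, so $b_x\cap b_y=\{x|k : k\le n\}$ is finite, hence $A_x\cap A_y=\sigma[b_x\cap b_y]$ is finite; and each $A_x$ is clearly infinite. Uncountability of $\A$ follows because $x\mapsto A_x$ is injective (distinct branches give distinct sets of initial segments) and $\can$ is uncountable. The only slightly less routine point is that $\{1_{A_x} : x\in\can\}$ is a \emph{closed} subset of $\can$ (under the identification above, of $2^{\seq}$). For this I would show its complement is open: if $C\subseteq\seq$ is not of the form $b_x$ for any branch $x$, then one of the following finitary obstructions occurs and is witnessed by finitely much information about $C$ --- either $C$ omits the empty sequence, or $C$ contains two incomparable nodes $s,t$, or $C$ contains a node $s$ of length $n+1$ but not its immediate predecessor $s|n$, or $C$ contains no node of some length $n$ while containing one of length $>n$ --- and in each case the set of $D\in 2^{\seq}$ agreeing with $C$ on the relevant finite set of coordinates is a basic clopen neighbourhood of $C$ disjoint from $\{b_x : x\in\can\}$. (One should note that a set $C$ which is linearly ordered by $\subseteq$, downward closed, and contains arbitrarily long sequences is exactly a branch $b_x$, while a set satisfying the first three conditions but bounded in length is finite; one has to be a little careful to confirm that these ``finite'' chains do not sneak into the closure, which they do not, since containing a maximal node is again a clopen condition --- alternatively, restrict at the outset to, say, $x$ ranging over a perfect set of branches to sidestep such bookkeeping.)

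The main obstacle, such as it is, is purely this last verification that the coding set is topologically closed rather than merely Borel: one must enumerate the finitely-checkable ways a subset of the tree can fail to be the initial-segment set of an infinite branch and confirm each is an open condition on $\can=2^{\seq}$. This is entirely elementary but is the one place where care is needed, since a sloppy coding (for instance, coding branches by their ranges under an arbitrary enumeration of $\N$ rather than by subsets of the tree) would yield only a Borel, not closed, family. With the coding above the argument is straightforward, and the weaker conclusion ``Borel'' --- which is all that Lemma~\ref{dichotomy-M} actually requires via Lemma~\ref{borel-ch} --- would follow a fortiori.
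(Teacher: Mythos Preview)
Your approach is essentially identical to the paper's: both encode branches $x\in\can$ of the binary tree $\seq$ by their sets of initial segments, transport these to $\sub$ via a bijection $\seq\leftrightarrow\N$, and verify closedness by showing that any subset of $\seq$ not of the form $b_x$ is separated from the family by a basic clopen set witnessed by a finitary obstruction. The paper's case analysis is slightly terser (just three cases: two incomparable nodes; a node present whose initial segment is absent; finiteness, witnessed by $C\cap 2^n=\emptyset$ for some $n$), but the content is the same as yours.
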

\begin{proof} Let $\seq=\bigcup_{n\in\N} 2^n$, and
let $\phi: \N\rightarrow \seq$ be a bijection.  Given $p\in \can$,
define $A_p=\{p|n: n\in \N\}\subseteq\seq$, where $p|n\in 2^n$ is the
restriction of $p$ to $\{0,\ldots, n-1\}$. It is clear that $\{A_p:
p\in \can\}\subseteq[\seq]^\omega$ is an almost disjoint family of
cardinality $\mathfrak c$.

Let $\A=\{\phi^{-1}[A_p]: p\in \can\}$. We
shall prove that $\{1_A: A\in \A\}$ is closed in~$\can$ by showing that its complement is open. Let $1_A\in\can\setminus\{1_B: B\in \A\}$. Then
$\phi[A]$ is not of the form $A_p$ for any $p\in \can$.  There are
three possible reasons for this:
\begin{enumerate}
  \item There are $s, t\in \phi[A]$ such
    that $s\not\subseteq t$ and $t\not\subseteq s$.
  \item There are $s,
t\in\seq$ such that $s\subseteq t$ and $t\in \phi[A]$, but $s\not\in
\phi[A]$.
\item $A$ is finite, and so there is $n \in \N$ such that
  $\phi[A]\cap 2^n=\emptyset$.
\end{enumerate}
  All these cases define clopen
neighbourhoods of $1_A$ disjoint from~$\{1_B: B\in \A\}$, as required.
 \end{proof}
 
 Now we step up the cardinality dichotomy for Borel subsets of $\can$ to
 its square $\can\times\can$, using a result from~\cite{two-remarks} of  van Engelen, Kunen and Miller.

\begin{lemma}\label{van-engelen} Suppose that $X\subseteq\can\times\can$ is a Borel set. Then 
one of the following conditions holds:
\begin{enumerate}
\item either there is
a countable $Y\subseteq\can$ such that each point of $X$ 
has at least one of its coordinates in $Y$,
\item or there is $Z\subseteq X$ of cardinality continuum such that for any distinct 
points $(p, q), (p', q')\in Z$, we have $\{p, q\}\cap\{p', q'\}=\emptyset$.
\end{enumerate}
\end{lemma}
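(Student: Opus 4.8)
The plan is to deduce this dichotomy from the corresponding statement for the irrationals (or for $\omega^\omega$), which is the form in which the van Engelen--Kunen--Miller result of \cite{two-remarks} is stated, by transferring along a Borel isomorphism. First I would recall that $\can\times\can$ is a standard Borel space, and that the set $P$ of non-eventually-constant sequences in $\can$ is a Borel subset homeomorphic to $\omega^\omega$; the complement $\can\setminus P$ is countable. Write $X = X_0 \cup X_1$, where $X_0 = X \cap \bigl((\can\setminus P)\times\can \;\cup\; \can\times(\can\setminus P)\bigr)$ has all of its points with at least one coordinate in the fixed countable set $\can\setminus P$, and $X_1 = X \cap (P\times P)$. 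If $X_1$ is handled, then either alternative~(1) holds for $X_1$ with some countable $Y_1 \subseteq \can$, in which case $Y = Y_1 \cup (\can\setminus P)$ witnesses~(1) for all of $X$; or alternative~(2) holds for $X_1$, and the same $Z$ works for $X$. So it suffices to treat $X_1 \subseteq P\times P$.

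Next I would fix a Borel isomorphism $h\colon P \to \omega^\omega$ (a homeomorphism, in fact, since $P$ with the subspace topology is homeomorphic to the Baire space) and push $X_1$ forward to the Borel set $\widetilde X = (h\times h)[X_1] \subseteq \omega^\omega\times\omega^\omega$. The cited theorem of van Engelen, Kunen and Miller asserts precisely that a Borel (indeed analytic) subset of $\omega^\omega\times\omega^\omega$ either can be covered by countably many ``lines'' $\{\,\cdot\,\}\times\omega^\omega$ and $\omega^\omega\times\{\,\cdot\,\}$, or contains a perfect (hence size-continuum) set $\widetilde Z$ whose points are pairwise coordinate-disjoint, i.e.\ for distinct $(p,q),(p',q') \in \widetilde Z$ one has $\{p,q\}\cap\{p',q'\} = \emptyset$. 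In the first case, the countable set of first-coordinates-used-by-vertical-lines together with the second-coordinates-used-by-horizontal-lines pulls back under $h^{-1}$ to a countable $Y \subseteq \can$ giving alternative~(1) for $X_1$. In the second case $Z = (h^{-1}\times h^{-1})[\widetilde Z] \subseteq X_1 \subseteq X$ has cardinality continuum, and since $h^{-1}$ is a bijection it preserves the coordinate-disjointness property, giving alternative~(2). (I would take a little care that ``perfect set'' in the source yields size $\cc$, which it does since a nonempty perfect Polish space has cardinality $\cc$; alternatively one may quote Lemma~\ref{borel-ch} applied to a copy of $\widetilde Z$ inside $\can$.)

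The main obstacle is not the topology-transfer bookkeeping, which is routine, but rather making sure that what we quote from \cite{two-remarks} is exactly the dichotomy stated here: the original is usually phrased for Borel subsets of the plane or of $\mathbb{R}^2$ with ``covered by countably many horizontal and vertical lines'' versus ``contains a perfect set meeting each horizontal and each vertical line in at most one point.'' One has to observe that a perfect set $\widetilde Z$ meeting every horizontal and every vertical line at most once automatically has pairwise coordinate-disjoint points \emph{after removing the diagonal}: two points sharing a coordinate would lie on a common line. A subtlety is that a point $(p,p)$ on the diagonal shares both of its coordinates with itself but that is vacuous; the genuine issue is points $(p,q),(p,q')$ or $(p,q),(p',q)$ or even $(p,q),(q,p')$ with $q$ reused across the two coordinates. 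Meeting each line at most once kills the first two; to kill the mixed type $\{p,q\}\cap\{p',q'\}\neq\emptyset$ via $q = p'$, one shrinks $\widetilde Z$ further: the set of $(p,q)\in\widetilde Z$ such that $q$ equals the first coordinate of some other point of $\widetilde Z$ is, by the at-most-one-per-line property, injectively parametrized, and one can pass to a perfect subset avoiding this countable-to-one obstruction (e.g.\ by a Borel uniformization / Mycielski-style argument, or simply by intersecting with a perfect set chosen generically). I expect this shrinking step — extracting from the perfect set furnished by \cite{two-remarks} a still-perfect subset all of whose points are \emph{totally} coordinate-disjoint in the strong sense demanded by alternative~(2) — to be the only place requiring genuine (though standard) descriptive-set-theoretic care, and I would carry it out last, after the transfer reductions above have reduced the problem to that clean form.
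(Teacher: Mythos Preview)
Your overall strategy is sound, but the mismatch you worry about is real, and neither of the two versions you state is what is actually in \cite{two-remarks}. The van Engelen--Kunen--Miller theorem is about analytic subsets of~$\R^2$ and \emph{arbitrary} affine lines: if an analytic $X\subseteq\R^2$ cannot be covered by countably many lines (of any slope), then $X$ contains a perfect set~$P$ with \emph{no three points collinear}. There is no version for $\omega^\omega\times\omega^\omega$ with horizontal and vertical fibers in that paper, and the notion of line needed for the conclusion makes no sense in Baire space. So the detour through~$\omega^\omega$ actually works against you; the paper instead embeds~$\can$ as a closed subset of~$\R$ and works in~$\R^2$.

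Once you are in~$\R^2$ with the correct statement, the argument you sketch and the paper's converge. The paper first disposes of the case where $X$ \emph{is} covered by countably many lines: a sloping line meeting~$X$ uncountably immediately yields alternative~(2) by transfinite recursion along that line (a point on a non-horizontal, non-vertical line conflicts with at most two others on the same line), and if all sloping intersections are countable one absorbs them into horizontal and vertical lines, which witness alternative~(1). In the remaining case the theorem supplies a perfect~$P$ with no three collinear, and the ``shrinking'' you anticipate is then a transfinite recursion of exactly the kind you have in mind: given $(p,q)\in P$, any $(p',q')\in P$ with $\{p,q\}\cap\{p',q'\}\neq\emptyset$ lies on one of the four lines $x=p$, $x=q$, $y=p$, $y=q$, and no-three-collinear bounds the number of such points by six. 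Thus fewer than~$\cc$ points are excluded at each stage and one extracts~$Z$ of size~$\cc$. Note that ``no three collinear'' is precisely what handles your mixed case $p'=q$ (both points then lie on the line $x=q$), so no separate Mycielski-type extraction is needed.
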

\begin{proof}  As $\can$ can be embedded as a closed subset of~$\R$, it is enough to prove the lemma for Borel subsets~$X$ of $\R^2$.

  First note that if there is a nonzero $a\in \R$ such that
$X$ intersects the line~$\ell$ given by $y=ax$ in an uncountable set, then the second alternative of the lemma is satisfied. Indeed, the intersection
  of~$\ell$ with $X$ is Borel and uncountable, and so of
  cardinality continuum by Lemma~\ref{borel-ch}. One can now construct the set~$Z$ by an easy transfinite
  recursion of length continuum by choosing its 
  elements  from this intersection, using the fact that given a point
  $(p,q)$ on~$\ell$, there may be at most two other points
  $(p',q')$ on~$\ell$ (namely $(p/a,p)$ and $(q,aq)$) with $\{p,
  q\}\cap\{p', q'\}\not=\emptyset$.

The above  observation implies the lemma in the case where~$X$ is  covered by countably many lines.
Indeed,  if there is a countable collection~$\mathcal{L}$ of lines covering~$X$, 
then $X=H\cup V\cup S$, where $H$, $V$ and $S$ are points on horizontal, vertical and sloping lines in~$\mathcal{L}$, respectively. If $S$ is uncountable, we are in the second alternative of the lemma by the preceding
observation. Otherwise $S$ is countable, so we can assume that $S=\emptyset$, as countably
many points can be covered by countably many horizontal and/or vertical lines.  Then the first alternative of the lemma is satisfied by the set $Y=\pi_y[H]\cup\pi_x[V]$, where
$\pi_x$ and $\pi_y$ are the projections on the $x$-axis and $y$-axis, respectively.

Hence we are left with the case where $X$ cannot be covered by countably many lines.
Then we can use a theorem of van Engelen, Kunen and Miller~\cite{two-remarks}, which says that
an analytic (in particular Borel) subset  of the plane which cannot be covered by countably many
lines contains a perfect subset $P$ (hence of size continuum) such that no three points in~$P$ are collinear. 
One may now construct a set~$Z\subseteq X$ such that the second alternative of the lemma is satisfied  by  choosing its elements from $P$ by an easy transfinite
recursion of length continuum, using the fact that given a point $(p,q)$ in $P$,
there are at most  six other points $(p', q')$  in $P$  with 
$\{p, q\}\cap\{p', q'\}\ne\emptyset$,
namely at most one point on  each of the lines
$x=p$ and $y=q$, and at most two points on  each of the lines $x=q$ and $y=p$.
\end{proof}

Before we can establish  our desired dichotomy for  acceptance and rejection (Lemma~\ref{dichotomy-M}),
we need to verify that certain sets induced by $\N\times\N$ matrices are Borel. This is done in
the following lemmas which culminate in Lemma \ref{matrices-borel}.

\begin{lemma}\label{borel-c} Suppose that $M=(m_{k, n})_{k, n\in \N}\in\M$. 
Then 
$$C(M)=\{1_A\in \can: M^A  \ \hbox{is  a compact matrix}\}$$
is a Borel subset of $2^\N$.
\end{lemma}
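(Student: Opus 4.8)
The plan is to express $C(M)$ as a countable Boolean combination of sets that are manifestly Borel (in fact clopen or $F_\sigma$/$G_\delta$), using the combinatorial characterization of compactness of a matrix. Recall that by Definition~\ref{compact-def}, $1_A \in C(M)$ if and only if $\lim_{j}\|(M^A)_j\| = 0$, i.e.\ for every $\varepsilon > 0$ there is $j \in \N$ such that for all $k \in \N$ (equivalently $k \in A$, since outside $A$ the relevant rows of $M^A$ vanish), $\sum_{n \in A,\, n > j} |m_{k,n}| \le \varepsilon$. Quantifying $\varepsilon$ over the countable set $\{1/r : r \ge 1\}$, this reads
\[
1_A \in C(M) \iff \bigcap_{r \ge 1} \bigcup_{j \in \N} \bigcap_{k \in \N}\ \Bigl\{\, \textstyle\sum_{n \in A,\, n > j} |m_{k,n}|\cdot 1_A(k) \le 1/r \,\Bigr\}.
\]

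The key step is to check that for fixed $r, j, k$ the set $\{1_A \in \can : \sum_{n \in A,\, n>j}|m_{k,n}| \le 1/r$ whenever $k \in A\}$ is Borel. The function $A \mapsto \sum_{n \in A,\, n>j} |m_{k,n}|$, viewed as a function of $1_A \in \can$, is lower semicontinuous: it is the supremum over finite sets $F \subseteq \{j+1, j+2, \dots\}$ of the maps $1_A \mapsto \sum_{n \in F}|m_{k,n}|\cdot 1_A(n)$, each of which is continuous (indeed locally constant) on $\can$. Hence $\{1_A : \sum_{n \in A, n>j}|m_{k,n}| \le 1/r\}$ is closed, and intersecting with the clopen set $\{1_A : 1_A(k) = 0\}$ or taking the whole space according to whether we condition on $k \in A$ shows the inner set is closed, so Borel. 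Taking the countable intersection over $k$, then countable union over $j$, then countable intersection over $r$ keeps us within the Borel $\sigma$-algebra, so $C(M)$ is Borel.

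I would actually present this slightly more carefully by first observing that it suffices to quantify $k$ over $A$ rather than over all of $\N$: when $k \notin A$, the $k$-th row of $M^A$ is identically zero, so $\sum_{n \in A, n>j}|m^{A}_{k,n}| = 0 \le 1/r$ automatically. Thus $\|(M^A)_j\| = \sup_{k \in A}\sum_{n \in A, n>j}|m_{k,n}|$, and the set $\{1_A : \|(M^A)_j\| \le 1/r\}$ equals $\bigcap_{k \in \N} E_{r,j,k}$ where $E_{r,j,k} = \{1_A : 1_A(k) = 0\} \cup \{1_A : \sum_{n>j}|m_{k,n}|\cdot 1_A(n) \le 1/r\}$, a union of a clopen set and a closed set, hence closed. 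Then $C(M) = \bigcap_{r\ge 1}\bigcup_{j\in\N}\bigcap_{k\in\N} E_{r,j,k}$ is Borel (an $F_{\sigma\delta}$ up to the clopen ingredients, which does not matter).

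The only mild subtlety — and the step I would be most careful about — is the interchange that lets us write $\|(M^A)_j\|$ as a countable supremum of continuous functions, which is what makes the relevant sublevel sets \emph{closed} rather than merely Borel; this rests on the elementary fact that for a fixed nonnegative sequence $(a_n)_{n>j}$, the sum $\sum_{n \in A, n>j} a_n$ as a function of $1_A$ is the increasing limit (hence supremum) of the finite partial sums over $A \cap F$ as $F$ ranges over an increasing sequence of finite sets exhausting $\{j+1, j+2, \dots\}$. Everything else is bookkeeping with the countable operations of the Borel hierarchy. No deeper descriptive-set-theoretic input (not even Lemma~\ref{borel-ch}) is needed here; that will enter only in the later lemmas built on top of this one.
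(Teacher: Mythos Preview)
Your proof is correct and follows essentially the same approach as the paper: your sets $E_{r,j,k}$ coincide with the paper's $S_{i,j,k}(M)$ (with $r=i+1$), since ``$k\in A\Rightarrow P$'' is just ``$1_A(k)=0$ or $P$'', and both arguments then express $C(M)$ as $\bigcap_r\bigcup_j\bigcap_k$ of these closed sets. Your lower-semicontinuity justification for closedness is a slightly more detailed version of the paper's observation that $1_A\notin S_{i,j,k}$ implies a whole basic clopen neighbourhood $[1_A|n]$ misses $S_{i,j,k}$.
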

\begin{proof}
Note that for every $i, j,  k\in \N$, the set
$$S_{i, j,  k}(M)=\Bigl\{1_A\in \can: 
k\in A\ \Rightarrow \ \sum_{n\in A, n>j}|m_{k, n}|\leq \frac{1}{i+1}\Bigr\}$$
is closed in $2^\N$ because if $1_A\not \in S_{i, j, k}$, then
$[1_A|n]\cap S_{i, j, k}(M)=\emptyset$ for some $n\in \N$.
By the definition of  a compact matrix, we have 
$$C(M)=
\bigcap_{i\in \N}\bigcup_{j\in \N}\bigcap_{k\in \N}S_{i, j, k}(M),$$
so it follows that $C(M)$ is a Borel subset of $ \can$, as required.
\end{proof}

In the next two proofs, we use the notation
$(n, i)$ for $n\in \N$ and
$i\in \{0,1\}$ to denote the partial function  which has
do\-main~$\{n\}$ and takes the value~$i$ at~$n$.

\begin{lemma}\label{borel-conv} Suppose that $M=(m_{k, n})_{k, n\in \N}\in \M$.
Then 
$$\operatorname{Conv}(M)=\{1_A\in\can : (m_{n, n})_{n\in A}\  \hbox{converges}\}$$
is a Borel subset of $\can$.
\end{lemma}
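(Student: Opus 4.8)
The plan is to express $\operatorname{Conv}(M)$ as a Borel combination of basic sets using the Cauchy criterion for convergence of the diagonal sequence $(m_{n,n})_{n\in A}$. The key observation is that $(m_{n,n})_{n\in A}$ converges if and only if it is Cauchy, which for a sequence indexed by the increasing enumeration of $A$ means: for every $i\in\N$ there is $j\in\N$ such that whenever $k,k'\in A$ with $k,k'>j$, we have $\lvert m_{k,k}-m_{k',k'}\rvert\leq\frac{1}{i+1}$. The point is that this last condition refers only to membership of finitely many (indeed, pairs of) integers in $A$, so the sets it defines are clopen.

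Concretely, first I would fix $i,j,k,k'\in\N$ with $k,k'>j$ and consider the set
\[ T_{i,j,k,k'}(M)=\Bigl\{1_A\in\can : k\in A \text{ and } k'\in A \ \Rightarrow\ \lvert m_{k,k}-m_{k',k'}\rvert\leq\tfrac{1}{i+1}\Bigr\}. \]
Since whether $1_A$ lies in this set depends only on the values $1_A(k)$ and $1_A(k')$, the set $T_{i,j,k,k'}(M)$ is clopen in $\can$: it is the union of $[(k,0)]$, $[(k',0)]$, and (if the inequality holds) $[(k,1)]\cup[(k',1)]$, or just $[(k,0)]\cup[(k',0)]$ if it fails. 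Then the Cauchy criterion gives
\[ \operatorname{Conv}(M)=\bigcap_{i\in\N}\bigcup_{j\in\N}\bigcap_{\substack{k,k'\in\N\\ k,k'>j}}T_{i,j,k,k'}(M), \]
which is a countable intersection of countable unions of countable intersections of clopen sets, hence Borel.

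There is essentially no hard step here; the only thing that needs a moment's care is checking that $\operatorname{Conv}(M)$ really coincides with the displayed intersection-union expression — that is, that boundedness of the sequence is automatic (it is, since $\lvert m_{n,n}\rvert\leq\|M\|$ for all $n$, so a Cauchy sequence in $\K$ indexed along $A$ does converge) and that the Cauchy condition is correctly translated into statements about pairs of integers lying in $A$. This mirrors exactly the structure of the proof of Lemma~\ref{borel-c}, where the sets $S_{i,j,k}(M)$ played the analogous role; the minor difference is that here we quantify over pairs $k,k'$ rather than single indices $k$, which is what one expects since the Cauchy criterion compares two terms of the sequence. I do not anticipate any real obstacle.
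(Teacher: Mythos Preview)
Your proposal is correct and follows essentially the same approach as the paper: both express $\operatorname{Conv}(M)$ via the Cauchy criterion as $\bigcap_{i}\bigcup_{j}\bigcap_{k,k'>j}$ of clopen sets defined by the condition ``$k,k'\in A \Rightarrow \lvert m_{k,k}-m_{k',k'}\rvert$ is small''. The only differences are cosmetic (the paper uses strict inequality $<1/(i+1)$ and omits the redundant index~$j$ from the name of the clopen set), and your remark about boundedness, while correct, is unnecessary since Cauchy sequences in~$\K$ converge regardless.
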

\begin{proof}
For every $i,  n, n'\in\N$, the set 
$$\operatorname{Conv}_{i,  n, n'}(M)=\Bigl\{1_A\in 2^\N:  n, n'\in A\ \Rightarrow \ |m_{n, n}-m_{n', n'}|<\frac{1}{i+1}\Bigr\}$$
is clopen in $2^\N$ because it is equal to $\can$ if
$|m_{n, n}-m_{n', n'}|<1/(i+1)$, and it is equal to $[(n,0)]\cup[(n',0 )]$ otherwise.
Hence the conclusion follows from the fact that 
\[ \operatorname{Conv}(M)=\bigcap_{i\in \N}\bigcup_{j\in \N}\bigcap_{n, n'>j}\operatorname{Conv}_{i,  n, n'}(M).\qedhere \]
\end{proof} 

\begin{lemma}\label{matrices-borel} Let $M\in \M$. Then
$$E(M)=\{(1_A, 1_{A'})\in \can\times \can: A\cup A' \
\hbox{accepts} \ M-\lambda I \ \hbox{for some}\ \lambda\in\K\}$$
is a Borel subset of $\can\times\can$. 
\end{lemma}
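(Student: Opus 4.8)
The plan is to express $E(M)$ as a countable Boolean combination of sets that are manifestly Borel, mirroring the structure of the proofs of Lemmas~\ref{borel-c} and~\ref{borel-conv}. Recall that $(1_A,1_{A'})\in E(M)$ means precisely that $(M-\lambda I)^{A\cup A'}$ is a compact matrix for some $\lambda\in\K$. By Lemma~\ref{wc-diagonal}(1), this matrix is compact if and only if its diagonal part $D((M-\lambda I)^{A\cup A'}) = (D(M)-\lambda I)^{A\cup A'}$ and its off-diagonal part $(M-\lambda I)^{A\cup A'}-D((M-\lambda I)^{A\cup A'}) = M^{A\cup A'}-D(M^{A\cup A'})$ are both compact. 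Crucially, the off-diagonal part does not depend on $\lambda$, and by Lemma~\ref{wc-diagonal}(2) the diagonal part is compact iff $\lim_{n\in A\cup A'} m_{n,n} = \lambda$. Hence the existential quantifier over $\lambda$ collapses: $(1_A,1_{A'})\in E(M)$ if and only if (i) $(m_{n,n})_{n\in A\cup A'}$ converges, and (ii) $M^{A\cup A'}-D(M^{A\cup A'})$ is a compact matrix.

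Now I would handle the map $(1_A,1_{A'})\mapsto 1_{A\cup A'}$. Addition of subsets corresponds to the continuous (indeed, coordinatewise clopen) map $\cup\colon \can\times\can\to\can$ sending $(p,q)$ to $p\vee q$ (pointwise maximum); preimages of Borel sets under a continuous map are Borel. Therefore, writing $N = M - D(M)$ for the off-diagonal part of $M$ (note $N\in\M$ and $N^B = M^B - D(M^B)$ for every $B\in\sub$), condition~(ii) says exactly that $1_{A\cup A'}\in C(N)$, which is Borel by Lemma~\ref{borel-c}; so the set of pairs satisfying~(ii) is the preimage of $C(N)$ under $\cup$, hence Borel. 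Similarly, condition~(i) says $1_{A\cup A'}\in\operatorname{Conv}(M)$, which is Borel by Lemma~\ref{borel-conv}, so the set of pairs satisfying~(i) is the preimage of $\operatorname{Conv}(M)$ under $\cup$, hence Borel. Then
\[ E(M) = {\cup}^{-1}[C(N)] \cap {\cup}^{-1}[\operatorname{Conv}(M)], \]
an intersection of two Borel subsets of $\can\times\can$, and we are done.

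The only genuine subtlety — really the heart of the argument — is the collapse of the quantifier ``$\exists\lambda\in\K$''. A naive reading of the definition of $E(M)$ suggests one must take a union over the uncountable set $\K$, which need not preserve Borelness. The point is that compactness of $(M-\lambda I)^{A\cup A'}$ forces $\lambda$ to be the limit of the diagonal over $A\cup A'$, so there is at most one candidate $\lambda$, and its existence together with the required compactness is equivalent to the two $\lambda$-free conditions~(i) and~(ii) above; this is precisely what Lemma~\ref{wc-diagonal} gives us. Once that observation is in place, everything else is bookkeeping: verifying that $\cup$ is continuous (immediate, since each coordinate of $p\vee q$ depends on only one coordinate of each of $p,q$), that $N\in\M$, and that $N^B$ is the off-diagonal part of $M^B$, all of which are routine.
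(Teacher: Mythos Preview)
Your proof is correct and follows essentially the same route as the paper's: both collapse the existential quantifier over~$\lambda$ via Lemma~\ref{wc-diagonal} to obtain $E(M)=\phi^{-1}[C(M-D(M))]\cap\phi^{-1}[\operatorname{Conv}(M)]$, where $\phi(1_A,1_{A'})=1_{A\cup A'}$ is continuous, and then invoke Lemmas~\ref{borel-c} and~\ref{borel-conv}. Your exposition is slightly more explicit about why the quantifier collapses, but the argument is the same.
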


\begin{proof} By Lemma \ref{wc-diagonal},  $(1_A, 1_{A'})\in E(M)$ if and only if 
 $1_{A\cup A'}\in C(M-D(M))$
and  $1_{A\cup A'}\in \operatorname{Conv}(M)$.
Hence
$$E(M)=\phi^{-1}[C(M-D(M))]\cap \phi^{-1}[\operatorname{Conv}(M)],$$
where  $\phi: \can\times\can\rightarrow \can$ is 
the function defined by $\phi(1_A, 1_{A'})=1_{A\cup A'}$. 
To check the continuity of $\phi$, it is enough to note that,  for any $n\in \N$,
\[ \phi^{-1}[[(n, 0)]]=[(n, 0)]\times[(n, 0)],\qquad
\phi^{-1}[[(n, 1)]]=\can\times [(n, 1)]\cup
   [(n, 1)]\times\can. \] 
   Therefore Lemmas \ref{borel-c} and  \ref{borel-conv} 
imply that $E(M)$ is a Borel subset of $\can\times\can$.
\end{proof}

\begin{lemma}[A dichotomy for acceptance and rejection]\label{dichotomy-M} 
Suppose that $M\in \M$ and
$\A\subseteq\sub$ 
is an uncountable, almost disjoint family
such that $\{1_A: A\in \A\}$ is  a Borel subset of $\can$.
Then one of the following holds:
\begin{enumerate}
\item either
$\A\setminus\A'$  accepts $M-\lambda I$ for some countable $\A'\subseteq\A$ and $\lambda\in \K$,
\item or 
there are pairwise disjoint sets $\{A_\xi, A_\xi'\}\subseteq \A$ and an infinite subset
$B_\xi\subseteq A_\xi\cup A_\xi'$ such that
$B_\xi$ rejects $M$  for every $\xi<\mathfrak c$.
\end{enumerate}
\end{lemma}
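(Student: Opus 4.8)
The plan is to mirror the proof of Lemma~\ref{dichotomy-F}, replacing the one-variable Borel dichotomy (Lemma~\ref{Lemma11}) with the square version (Lemma~\ref{van-engelen}) and replacing the Decidability/Monotonicity of Lemma~\ref{monotonicity} with the corresponding parts of Lemma~\ref{acceptance}. Concretely, set $X=\{1_A:A\in\A\}\subseteq\can$, which is an uncountable Borel set and hence of cardinality~$\mathfrak c$ by Lemma~\ref{borel-ch}, and consider the Borel set $E(M)\cap(X\times X)$, where $E(M)$ is the Borel set from Lemma~\ref{matrices-borel}. I would then split into two cases according to whether this set satisfies the first or the second alternative of the dichotomy for Borel sets in the square (Lemma~\ref{van-engelen}).

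First suppose alternative~(1) holds for $E(M)\cap(X\times X)$: there is a countable $Y\subseteq\can$ such that every pair $(1_A,1_{A'})\in E(M)\cap(X\times X)$ has a coordinate in~$Y$. Let $\A'=\{A\in\A:1_A\in Y\}$, a countable subfamily. Then for $A,A'\in\A\setminus\A'$ the pair $(1_A,1_{A'})$ lies outside $E(M)$, so $A\cup A'$ does not accept $M-\lambda I$ for any~$\lambda$. In particular, taking $A=A'$, no $A\in\A\setminus\A'$ accepts $M-\lambda I$ for any~$\lambda$, so by the Decidability of Lemma~\ref{acceptance} we could produce infinitely many rejecting subsets — but that is the \emph{second} alternative, not the first. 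So this naive reading cannot be right; instead, when alternative~(1) holds I would argue that actually one must fall into the second alternative of the present lemma. More carefully: if $E(M)\cap(X\times X)$ satisfies Lemma~\ref{van-engelen}(1), pick $\A'$ as above; for all but countably many $A\in\A$ we have $A\notin\A'$, and for such $A$, $A$ alone does not accept $M-\lambda I$ for any~$\lambda$ (since $(1_A,1_A)\notin E(M)$), so Decidability gives an infinite $B_A\subseteq A$ rejecting~$M$. Choosing the pairs $\{A_\xi,A_\xi'\}$ disjointly (possible since $\lvert\A\setminus\A'\rvert=\mathfrak c$) and setting $B_\xi=B_{A_\xi}\subseteq A_\xi\subseteq A_\xi\cup A_\xi'$ gives the second alternative.

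Now suppose alternative~(2) of Lemma~\ref{van-engelen} holds: there is $Z\subseteq E(M)\cap(X\times X)$ of cardinality~$\mathfrak c$ whose points are pairwise coordinate-disjoint. Write $Z=\{(1_{A_\xi},1_{A_\xi'}):\xi<\mathfrak c\}$; pairwise coordinate-disjointness means the sets $\{A_\xi,A_\xi'\}$ are pairwise disjoint subfamilies of~$\A$. Each pair satisfies $A_\xi\cup A_\xi'$ accepts $M-\lambda_\xi I$ for some $\lambda_\xi\in\K$. If $\lambda_\xi$ were the same value~$\lambda$ for a cocountably-indexed subset, I would want to conclude the first alternative of the present lemma via the Amalgamation of Lemma~\ref{acceptance} — but Amalgamation needs acceptance of $M-\lambda I$ on \emph{every} pair from the family, not just on a prescribed system of pairs, so one has to be careful. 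Here the right split is: either there exists $\lambda$ and a cocountable $\A''\subseteq\bigcup_\xi\{A_\xi,A_\xi'\}$ on which $M-\lambda I$ is accepted pairwise, giving alternative~(1) after enlarging by Amalgamation; or infinitely many distinct $\lambda_\xi$ occur, and then for each such $\xi$ the pair $A_\xi\cup A_\xi'$ accepts $M-\lambda_\xi I$ but not $M-\lambda I$ for the "wrong" $\lambda$, so by Decidability (applied to $A_\xi\cup A_\xi'$, which fails to accept $M-\lambda I$ for the right comparison) we extract an infinite $B_\xi\subseteq A_\xi\cup A_\xi'$ that rejects~$M$ — the second alternative.

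\textbf{Main obstacle.} The delicate point is the bookkeeping in the alternative~(2) case: translating "$A\cup A'$ accepts $M-\lambda_{A,A'}I$ with varying $\lambda_{A,A'}$" into either a single global $\lambda$ (so that Amalgamation applies and we land in the first alternative) or enough rejecting subsets for the second alternative. The clean way is probably to define, on the cardinality-$\mathfrak c$ index set from $Z$, the map $\xi\mapsto\lambda_\xi$ and observe that by Lemma~\ref{wc-diagonal}(3) this $\lambda_\xi$ is \emph{uniquely} determined by the pair; then either one value of $\lambda$ is attained cocountably often — whence, together with the Borel/countable structure on $X\setminus D(M-\lambda I)$-type sets and Amalgamation of Lemma~\ref{acceptance}, we get that $\A$ minus a countable set accepts $M-\lambda I$ — or the level sets of $\xi\mapsto\lambda_\xi$ are all "small", in which case we can still choose continuum-many disjoint pairs with pairwise distinct $\lambda$-values, forcing non-acceptance of any fixed $M-\lambda I$ on each and hence, by Decidability, a rejecting infinite subset $B_\xi$. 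Making the first horn genuinely yield a \emph{countable} exceptional family $\A'$ (rather than merely cocountably-many good pairs) is where one must invoke that $\{1_A:A\in\A\}$ is Borel together with Lemma~\ref{borel-ch} once more, exactly as in the proof of Lemma~\ref{dichotomy-F}.
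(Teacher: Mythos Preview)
You have applied Lemma~\ref{van-engelen} to the wrong set. The paper applies it to the \emph{complement}
\[
X=(X(\A)\times X(\A))\setminus E(M),
\]
not to $E(M)\cap(X(\A)\times X(\A))$. With that choice the two alternatives of Lemma~\ref{van-engelen} line up directly with the two alternatives of the present lemma: if~(1) holds for~$X$, then outside a countable $\A'$ \emph{every} pair $(1_A,1_{A'})$ lies in~$E(M)$, and Amalgamation (Lemma~\ref{acceptance}(4)) immediately gives a single~$\lambda$; if~(2) holds for~$X$, the selected pairs $(1_{A_\xi},1_{A_\xi'})$ lie \emph{outside} $E(M)$, so $A_\xi\cup A_\xi'$ accepts $M-\lambda I$ for \emph{no}~$\lambda$, and Decidability (Lemma~\ref{acceptance}(3)) produces the rejecting~$B_\xi$.

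Your Case~1 (alternative~(1) of Lemma~\ref{van-engelen} for $E(M)\cap(X\times X)$) happens to work, because it forces $(1_A,1_A)\notin E(M)$ for cocountably many~$A$, whence Decidability applies. But your Case~2 is genuinely stuck, and both attempted exits fail. First, your use of Decidability is illegitimate: Decidability requires that $A_\xi\cup A_\xi'$ accept $M-\lambda I$ for \emph{no} $\lambda\in\K$, whereas in your Case~2 each $A_\xi\cup A_\xi'$ \emph{does} accept $M-\lambda_\xi I$; failing to accept $M-\lambda I$ for a particular ``wrong'' $\lambda$ gives you nothing. Second, even if all the $\lambda_\xi$ coincide, you only know acceptance on the prescribed pairs $(A_\xi,A_\xi')$, not on arbitrary pairs $A_\xi\cup A_\eta$ or on pairs involving elements of~$\A$ outside $\bigcup_\xi\{A_\xi,A_\xi'\}$; Amalgamation needs the full pairwise hypothesis and cannot be invoked. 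The hand-waving about ``the Borel/countable structure on $X\setminus D(M-\lambda I)$-type sets'' does not bridge this gap: alternative~(2) of Lemma~\ref{van-engelen} for $E(M)\cap(X\times X)$ simply carries no information about how large the complement is. The fix is one line: swap $E(M)\cap(X\times X)$ for its complement in $X\times X$ before invoking Lemma~\ref{van-engelen}.
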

\begin{proof} Let $X(\A)=\{1_A: A\in \A\}$. 
By Lemma \ref{matrices-borel}, the set  
$$X=(X(\A)\times X(\A))\setminus E(M)$$
 is Borel as the Cartesian product
 of two Borel sets is Borel, where we recall that
$$E(M)=\{(1_A, 1_{A'})\in\can\times\can: A\cup A'\ \hbox{accepts}\
 M-\lambda I \ \hbox{for some}\ \lambda\in \K\}.$$
 We can therefore apply Lemma~\ref{van-engelen} to~$X$.

The first alternative of Lemma \ref{van-engelen} gives that there
is a countable $\A'\subseteq \A$ such that
if $A, A'\in \A\setminus \A'$, then
$(1_A, 1_{A'})\in E(M)$, that is, 
 $A\cup A'$ accepts $M-\lambda I$ for some $\lambda\in \K$. By the Amalgamation  
of Lemma \ref{acceptance}, this means that 
there is a single $\lambda\in \K$ such that 
$\A\setminus \A'$  accepts $M-\lambda I$.

The second alternative of Lemma \ref{van-engelen} gives a set
$Z\subseteq X$ of cardinality continuum such that
$\{1_A, 1_{A'}\}\cap\{1_B,
1_{B'}\}=\emptyset$ for any distinct
points $(1_A,1_{A'}), (1_B, 1_{B'})\in Z$. This yields a pairwise disjoint family $\{\{A_\xi,
A_{\xi}'\}: \xi<\mathfrak c\}$ with $A_\xi, A_{\xi}'\in\A$ such that
${A_\xi\cup A_\xi'}$ does not accept $M-\lambda I$ for any
$\lambda\in\K$ and any $\xi<\mathfrak c$.  By the Decidability of
Lemma \ref{acceptance}, this produces an infinite set $B_\xi\subseteq
A_\xi\cup A_\xi'$ such that $B_\xi$ rejects~$M$.
\end{proof}

\section{Consequences of
Theorem \ref{main-theorem}, context and open questions}\label{final}
\noindent
In this section we shall establish some consequences 
of Theorem~\ref{main-theorem}, first for the compact space~$\alpha K_\A$ and then for the Banach algebra $\BB(C_0(K_\A))$ of all bounded linear operators on~$C_0(K_\A)$.
Moreover, we shall discuss
 the place of the
 Banach space~$C_0(K_\A)$ 
 among other known Banach spaces with few operators, and we shall present
  and motivate some
 open questions which arise naturally from our work.

\subsection{Continuous self-maps of~$\alpha K_\A$ when $C_0(K_\A)$ has few operators}

\begin{proposition}\label{maps}
Suppose that~$\A\subseteq\sub$ is an uncountable, almost disjoint
family such that all bounded linear operators on~$C_0(K_\A)$ are of the form described in
Theorem~\ref{main-theorem}.  Then, for every continuous map $\phi:
\alpha K_\A\rightarrow \alpha K_\A$, either~$\phi$ has countable range
or the set of fixed points of $\phi$ is cocountable.
\end{proposition}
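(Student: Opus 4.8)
The plan is to translate the statement about the continuous self-map $\phi\colon \alpha K_\A\to\alpha K_\A$ into a statement about the composition operator $C_\phi\in\BB(C(\alpha K_\A))$ defined by $C_\phi(f)=f\circ\phi$, and then to exploit the hypothesis that every bounded operator on $C_0(K_\A)$ (equivalently, on $C(\alpha K_\A)$, since the two spaces are isomorphic) has the form $\lambda\Id+S$ with $S$ having separable range. First I would recall that $C_\phi$ is a well-defined bounded operator of norm~$1$, and is in fact an algebra homomorphism of $C(\alpha K_\A)$. Applying the hypothesis, write $C_\phi=\lambda\Id+S$ where $S$ has separable range; the immediate goal is to show that either $\lambda=0$, in which case $C_\phi$ itself has separable range, or $\lambda=1$, in which case $\Id-C_\phi$ has separable range.

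To pin down the scalar $\lambda$, I would test $C_\phi$ on a suitable family of functions. The natural candidates are the characteristic functions $1_{U(A)}$ (viewed inside $C(\alpha K_\A)$, i.e.\ extended by $0$ at infinity, or their analogues) and the point masses $1_{\{x_n\}}$, since these span a dense subspace. Evaluating $C_\phi$ on an idempotent $e$ gives $(\lambda\Id+S)(e)=e\circ\phi$, which is again an idempotent in $C(\alpha K_\A)$, i.e.\ a $\{0,1\}$-valued function; comparing with $\lambda e+S(e)$ and using that $S(e)$ lies in a fixed separable subspace should force $\lambda\in\{0,1\}$ after looking at uncountably many pairwise ``almost disjoint'' idempotents $1_{U(A)}$. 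More concretely: if $\lambda\notin\{0,1\}$, then $\lambda 1_{U(A)}$ differs from $0$ and from $1$ on $U(A)$ by a definite amount, so $S(1_{U(A)})=1_{U(A)}\circ\phi-\lambda 1_{U(A)}$ would have to take values bounded away from $0$ on a large set for each $A$, and the near-disjointness of the supports $U(A)$ for $A\in\A$ (together with $\A$ uncountable) contradicts separability of the range of $S$ via Lemma~\ref{separability-lemma}-type reasoning. I expect this normalization step, and in particular choosing the right test functions and disjointness argument, to be one of the more delicate points.

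Next I would handle the two cases. If $\lambda=0$, then $C_\phi$ has separable range, so $\{f\circ\phi : f\in C(\alpha K_\A)\}$ is separable; since $C(\alpha K_\A)$ separates points and $\{f\circ\phi\}$ is exactly the pullback of $C$ of the image, this forces $\phi[\alpha K_\A]$ to be a metrizable (hence second countable) compact subspace of the scattered space $\alpha K_\A$, and a second countable scattered compact space is countable, giving countable range. If $\lambda=1$, then $T:=\Id-C_\phi$ has separable range, so $s(T[C(\alpha K_\A)])$ is countable in the sense of Definition~\ref{Defn_sf} (adapted to $\alpha K_\A$); in particular, for all but countably many $A\in\A$ we have $T(1_{U(A)})(y_B)=0$ for all but countably many $B$, and likewise $T(1_{\{x_n\}})=0$ off a countable set. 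I would then argue that $T(f)=f-f\circ\phi$ vanishing ``almost everywhere'' on the dense set of isolated points $\{x_n\}$ forces $\phi(x_n)=x_n$ for all but countably many $n$, and then by continuity and density that $\phi$ fixes all but countably many points of $\alpha K_\A$ (the exceptional set being the closure of a countable set of isolated points plus countably many $y_A$'s, which is countable because $\alpha K_\A$ is scattered and these closures are small).

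The main obstacle I anticipate is the bookkeeping in the case $\lambda=1$: showing that separability of the range of $\Id-C_\phi$, which a priori only controls where $T(f)$ is nonzero for each fixed $f$, actually forces $\phi$ to act as the identity on a cocountable set of points rather than merely on a cocountable set of the isolated points. This should follow by combining: (a) $\phi(x_n)=x_n$ for cocountably many $n$; (b) continuity of $\phi$ together with the convergence $x_k\to y_A$ for $k\in A$ (Lemma~\ref{topological}), which for cocountably many $A$ gives $\phi(y_A)=\lim_{k\in A}\phi(x_k)=\lim_{k\in A}x_k=y_A$; and (c) the point at infinity, which must be fixed since it is the unique non-isolated point outside $\{y_A\}$ in $\alpha K_\A$ with a non-compact deleted neighbourhood, or simply because $\phi$ is continuous and cannot send a cocountable fixed set's limit point elsewhere. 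Assembling (a)--(c) yields that the non-fixed set is contained in $\{x_n : n\in N_0\}\cup\{y_A : A\in\A_0\}\cup\{\infty\}$ for countable $N_0,\A_0$, hence is countable, completing the proof.
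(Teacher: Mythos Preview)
Your overall strategy---pass to the composition operator $C_\phi$ on $C(\alpha K_\A)$, write $C_\phi=\lambda\Id+S$ with $S$ of separable range, and analyse~$\lambda$---is the same as the paper's. Your normalization $\lambda\in\{0,1\}$ can indeed be made precise: evaluating $S(1_{U(A)})=1_{U(A)}\circ\phi-\lambda 1_{U(A)}$ at $y_A$ gives a value in $\{-\lambda,1-\lambda\}$, so if $\lambda\notin\{0,1\}$ then $A\in s(S(1_{U(A)}))$ for every $A\in\A$, contradicting Lemma~\ref{separability-lemma}. Your handling of $\lambda=0$ via the identification of the range of~$C_\phi$ with $C(\phi[\alpha K_\A])$ (separable, hence $\phi[\alpha K_\A]$ metrizable, hence countable as a scattered compact metric space) is correct and somewhat slicker than the paper's explicit $1$-separated family.

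The genuine gap is in your treatment of $\lambda=1$. Your step~(a), ``$\phi(x_n)=x_n$ for all but countably many~$n$'', is vacuous: the set $\{x_n:n\in\N\}$ is itself countable. More to the point, separability of the range of $T=\Id-C_\phi$ gives, via Lemma~\ref{separability-lemma}, a countable $\B_0\subseteq\A$ with $T(f)(y_A)=0$ for \emph{all}~$f$ and all $A\notin\B_0$; it says nothing about values of $T(f)$ at the isolated points. Consequently step~(b) cannot be deduced from~(a) by taking limits along~$A$, since that would require $\phi(x_k)=x_k$ for all but \emph{finitely} many $k\in A$. The fix is to drop the isolated points entirely: for $A\notin\B_0$ one has $f(y_A)=f(\phi(y_A))$ for every $f\in C(\alpha K_\A)$, and point-separation gives $\phi(y_A)=y_A$ directly. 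The non-fixed points then lie in $\{x_n:n\in\N\}\cup\{y_A:A\in\B_0\}\cup\{\infty\}$, which is countable.

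For comparison, the paper does not first reduce to $\lambda\in\{0,1\}$. Instead it assumes the fixed set is not cocountable, picks uncountably many $B_\xi\in\A$ with $\phi(y_{B_\xi})\ne y_{B_\xi}$, and after a short case split on whether the images $\phi(y_{B_\xi})$ are eventually constant or not, exhibits an uncountable $|\lambda|$-separated family in the range of $C_\phi-\lambda\Id$, forcing $\lambda=0$ and hence countable range. Your dichotomy on~$\lambda$ is conceptually tidier once the $\lambda=1$ case is repaired as above; the paper's route trades the preliminary normalization for a slightly more hands-on estimate.
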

\begin{proof}

As $C_0(K_\A)$ is isomorphic to $C(\alpha K_\A)$, the hypothesis implies that
every bounded linear operator on $C(\alpha K_\A)$ is the  sum of a scalar
multiple of the identity and an operator with separable range. Given a  continuous map $\phi:
\alpha K_\A\rightarrow \alpha K_\A$, we will apply this hypothesis to the composition operator
$T_\phi: C(\alpha K_\A)\rightarrow C(\alpha K_\A)$ defined by $T_\phi(f)=f\circ \phi$
for $f\in C(\alpha K_\A)$.

We begin by showing that if $T_\phi$ has separable range, then $\phi$ has countable range. To this end, suppose that~$\phi$ has uncountable range.
Then there are distinct $B_\xi\in \A$ for $\xi<\omega_1$ such
that $y_{B_\xi}=\phi(y_{A_\xi})$ for some $A_\xi\in \A$.
For $\xi<\eta<\omega_1$, we have
\begin{align*} \|T_\phi(1_{U(B_\xi)})-T_\phi(1_{U(B_\eta)})\|&\geq
|T_\phi(1_{U(B_\xi)})(y_{A_\xi})-T_\phi(1_{U(B_\eta)})(y_{A_\xi})|\\
&=|1_{U(B_\xi)}(\phi(y_{A_\xi}))-1_{U(B_\eta)}(\phi(y_{A_\xi}))|\\ &=|1_{U(B_\xi)}(y_{B_\xi})-1_{U(B_\eta)}(y_{B_\xi})|=1-0=1. \end{align*}
This shows that the range
of~$T_\phi$ is nonseparable, and the conclusion follows.

We are now ready to prove the dichotomy stated in the
proposition. More pre\-cise\-ly, we shall show that if the set of fixed points of~$\phi$ is not
cocountable, then~$T_\phi$ has separable range, because it 
will then follow from the previous paragraph that~$\phi$ has countable range.
As explained in the first paragraph, the hypothesis of the proposition implies that there is a scalar~$\lambda$ such that $T_\phi-\lambda\operatorname{Id}$ has
separable range.
Suppose that there are uncountably many points of~$\alpha K_\A$  which are not fixed under~$\phi$. Then
we can choose distinct $B_\xi\in \A$ for $\xi<\omega_1$ such that
$\phi(y_{B_\xi})\ne y_{B_\xi}$. Set $z_\xi=\phi(y_{B_\xi})$.
Since the set
$\{z_\xi: \xi<\omega_1\}$ is either countable or uncountable, by passing to a suitable uncountable subset of it, we may assume that
\begin{enumerate}
\item either $z_\xi=z_\eta=z$ for all $\xi,\eta<\omega_1$ and some $z\in\alpha K_\A$, in which case we may further arrange that $z\in U(B_\xi)$ if and only if $z\in U(B_\eta)$ for any $\xi, \eta<\omega_1$,
\item or  by a transfinite recursive construction, there are distinct $A_\xi\in \A$ for $\xi<\omega_1$ such that
  $z_\xi=y_{A_\xi}$ and $A_\xi\not=B_\eta$ for any  $\xi, \eta<\omega_1$.
\end{enumerate}
Then, for $\xi<\eta<\omega_1$, we have
\begin{align*} \|(T_\phi-\lambda\operatorname{Id})(1_{U(B_\xi)})
-(T_\phi-\lambda\operatorname{Id})(1_{U(B_\eta)})\|
&\geq\\ |(T_\phi-\lambda\operatorname{Id})(1_{U(B_\xi)})(y_{B_\xi})
-(T_\phi-\lambda\operatorname{Id})(1_{U(B_\eta)})(y_{B_\xi})|&=\\
|1_{U(B_\xi)}(z_\xi)-\lambda-1_{U(B_\eta)}(z_\xi))|
&=|\lambda|
\end{align*}
because in case (1) $1_{U(B_\xi)}(z_\xi)=1_{U(B_\eta)}(z_\xi)$, while in case (2) $z_\xi=y_{A_\xi}\notin\{y_{B_\xi}, y_{B_\eta}\}$. Since $\lambda$ was chosen such that
 $T_\phi-\lambda\operatorname{Id}$ has
separable range, we must have $\lambda=0$, so~$T_\phi$ has separable range, as desired.
\end{proof}

\subsection{Automatic continuity of homomorphisms from $\mathscr{B}(C_0(K_\mathcal{A}))$}\label{autocont}
Ever since John\-son~\cite{johnson} proved that every algebra homomorphism from $\BB(\X)$ into a Banach algebra is continuous whenever the Banach space~$\X$ is isomorphic to its square~\mbox{$\X\oplus \X$}, it has been a natural question whether this is also true for infinite-dimensional Banach spaces~$\X$ which are not isomorphic to their squares. The first examples of Banach spaces with that property were not found until 1960, just a few years before Johnson's paper. They were: 
\begin{itemize}
  \item the quasi-reflexive Banach space~$\mathcal{J}$ constructed by James~\cite{james}, as shown by Bessaga and Pe\l{}czy\'{n}ski~\cite{bessaga}, and
\item the space of continuous functions $C([0,\omega_1])$, as shown by 
Semadeni~\cite{semadeni-omega1}, where~$[0,\omega_1]$ denotes the
compact Hausdorff space consisting of all ordinals not exceeding~$\omega_1$, endowed with the order
topology.
\end{itemize}
Willis~\cite{willis} and Ogden~\cite{ogden} have 
proved that all
 homomorphisms from $\BB(\X)$ into a Banach algebra are indeed
 continuous for these two spaces.
 However, this conclusion does not
 extend to all Banach spaces because  Read~\cite{read} has constructed a Banach
 space~$\mathcal{X}_{\text{R}}$ such that~$\mathscr{B}(\mathcal{X}_{\text{R}})$ admits a discontinuous
 derivation, and hence a discontinuous homomorphism into a Banach algebra.  Dales, Loy and
 Willis~\cite{dlw} have subsequently given an example of a Banach
 space~$\mathcal{X}_{\text{DLW}}$ such that all derivations
 from~$\mathscr{B}(\mathcal{X}_{\text{DLW}})$ are continuous, but, assuming the Continuum Hypo\-thesis~\textsf{(CH)},  there is a discontinuous
 homo\-mor\-phism from~$\mathscr{B}(\mathcal{X}_{\text{DLW}})$ into a Banach algebra.

  When $K$ is an infinite metrizable compact space, or in other words when $C(K)$ is an infinite-dimensional separable Banach space, all homomorphisms from $\BB(C(K))$ into a Banach algebra
are
continuous by Johnson's original result because the isomorphic classification of separable $C(K)$-spaces due to
Milutin, Bes\-sa\-ga and Pe\l\-czy\'n\-ski implies that $C(K)$ is isomorphic to
its square.

On the other hand, the Banach space~$C_0(K_\A)$ obtained in~\cite{mrowka} or
Theorem~\ref{main-theorem} is clearly not isomorphic to its square, so the question arises whether every homomorphism from~$\BB(C_0(K_\A))$ into a Banach algebra is continuous. The aim of this subsection is to show that this is indeed true. Our proof uses standard techniques, following closely the approach of Willis~\cite{willis}, which is based on the following notion.

\begin{definition}\label{Defncompress}
  A bounded linear operator $T$ on a Banach space~$\mathcal{X}$ is \emph{compressible} if, for some integer~$n\ge 1$, there is a sequence of bounded linear projections $(Q_j)$ on the direct sum~$\mathcal{X}^n$ of~$n$ copies of~$\mathcal{X}$ such that:
  \begin{enumerate}[label={\normalfont{(\roman*)}}]
  \item\label{Defncompress1} $Q_jQ_k=0$ whenever $j,k\in\N$ are distinct, and
  \item\label{Defncompress2} $T$   factors through $Q_j$ for every $j\in\N$.
  \end{enumerate}
\end{definition}

Before we state our central result, which will have the desired conclusion as an immediate consequence, recall that
$\mathscr{X}(C_0(K_\A))$ denotes the ideal of~$\BB(C_0(K_\A))$ consisting of all operators with separable range.   By Lemma~\ref{jl-c0}, an operator belongs to this ideal if and only if it factors through~$c_0$.

\begin{lemma}\label{AutoContinuityLemma} Let $\A\subseteq\sub$ be an almost disjoint family. Then: 
  \begin{enumerate}
  \item\label{AutoContinuityLemma1} Every operator in the ideal~$\mathscr{X}(C_0(K_\A))$ is compressible.
  \item\label{AutoContinuityLemma2} Null sequences in $\mathscr{X}(C_0(K_\A))$ factor in the following precise sense: whenever $(T_j)$ is a sequence in  $\mathscr{X}(C_0(K_\A))$ such that $\lVert T_j\rVert\to 0$ as $j\to\infty,$ there are $S\in\mathscr{X}(C_0(K_\A))$ and a sequence $(R_j)$ in~$\mathscr{X}(C_0(K_\A))$ such that $T_j = SR_j$ for every $j\in\N$ and $\lVert R_j\rVert\to 0$ as $j\to\infty$.
  \item\label{AutoContinuityLemma3} Every homomorphism from~$\mathscr{X}(C_0(K_\A))$ into a Banach algebra is continuous. 
\end{enumerate}
\end{lemma}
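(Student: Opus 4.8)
The plan is to prove the three items in the order listed, obtaining~(3) from~(1) and~(2) by the automatic-continuity machinery of Bade--Curtis, Johnson and Willis. Two structural facts about $C_0(K_\A)$ will be used repeatedly: it contains a complemented closed subspace isomorphic to $c_0$, and (Lemma~\ref{jl-c0}) every separable closed subspace of it is contained in a closed subspace isomorphic to $c_0$; equivalently, every operator in $\mathscr{X}(C_0(K_\A))$ factors as $VW$ for some $W\colon C_0(K_\A)\to c_0$ and $V\colon c_0\to C_0(K_\A)$.

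For~(1) I would take $n=1$ in Definition~\ref{Defncompress}. Fix an isomorphism $\Theta\colon c_0\to C_0(K_\A)$ onto a complemented subspace, with projection $P\in\BB(C_0(K_\A))$ having range $\Theta[c_0]$, and write $\Theta^{-1}$ for the inverse of $\Theta$ on $\Theta[c_0]$. Identifying $c_0$ with $c_0(\N\times\N)=(\bigoplus_{j\in\N}c_0)_{c_0}$, let $\pi_j$ be the norm-one projection of $c_0$ onto the $j$-th summand and $\tau_j\colon c_0\to\pi_j[c_0]$ the canonical isomorphism, so that $\pi_j\pi_k=0$ for $j\neq k$ and $\pi_j\tau_j=\tau_j$. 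Put $Q_j:=\Theta\pi_j\Theta^{-1}P$. Since $P\Theta=\Theta$, one checks that $Q_j^2=Q_j$ and $Q_jQ_k=0$ for $j\neq k$, which is condition~\ref{Defncompress1}. Given $T\in\mathscr{X}(C_0(K_\A))$, write $T=VW$ as above and set $B_j:=\Theta\tau_jW$ and $A_j:=V\tau_j^{-1}\Theta^{-1}Q_j$; using $P\Theta=\Theta$ and $\pi_j\tau_j=\tau_j$ one gets $Q_jB_j=B_j$ and $Q_j\Theta\tau_j=\Theta\tau_j$, hence $A_jQ_jB_j=A_jB_j=VW=T$, so $T$ factors through each $Q_j$. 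This gives~\ref{Defncompress2}, and thus~(1).

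For~(2), note that $\overline{\bigcup_j T_j[C_0(K_\A)]}$ is separable, so by Lemma~\ref{jl-c0} there are a closed subspace $\mathcal{Y}_0$ of $C_0(K_\A)$ with $T_j[C_0(K_\A)]\subseteq\mathcal{Y}_0$ for all $j$ and an isomorphism $\theta_0\colon c_0\to\mathcal{Y}_0$; writing $\iota_0\colon\mathcal{Y}_0\hookrightarrow C_0(K_\A)$ for the inclusion and $\Gamma_j:=\theta_0^{-1}T_j\in\BB(C_0(K_\A),c_0)$, we have $T_j=\iota_0\theta_0\Gamma_j$ and $\lVert\Gamma_j\rVert\leq\lVert\theta_0^{-1}\rVert\lVert T_j\rVert\to 0$. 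Now fix once and for all a complemented copy $\mathcal{Y}'$ of $c_0$ in $C_0(K_\A)$, with isomorphism $\theta'\colon c_0\to\mathcal{Y}'$, inclusion $\iota'$, and projection $P'\in\BB(C_0(K_\A))$ onto $\mathcal{Y}'$, and set $R_j:=\iota'\theta'\Gamma_j$ and $S:=\iota_0\theta_0(\theta')^{-1}P'$. Both $R_j$ and $S$ have separable range, hence belong to $\mathscr{X}(C_0(K_\A))$; furthermore $\lVert R_j\rVert\leq\lVert\theta'\rVert\lVert\Gamma_j\rVert\to 0$, and since $P'$ restricts to the identity on $\mathcal{Y}'$ we get $SR_j=\iota_0\theta_0(\theta')^{-1}P'\iota'\theta'\Gamma_j=\iota_0\theta_0\Gamma_j=T_j$. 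The essential point is that no rescaling of the factors is required: routing every $T_j$ through one fixed complemented copy of $c_0$ transfers the norm decay of $(T_j)$ directly to $(R_j)$.

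Finally,~(3) is to be deduced from~(1) and~(2) by the standard separating-space argument, following Willis's treatment of $\BB(C([0,\omega_1]))$ in~\cite{willis}; this is the step that requires real machinery and is where I expect the main difficulty. Let $\psi\colon\mathscr{X}(C_0(K_\A))\to\mathfrak{B}$ be a homomorphism into a Banach algebra. By the closed graph theorem it is enough to show that the separating space $\mathfrak{S}(\psi)=\{b\in\mathfrak{B}:\exists\,(x_n)\ \text{in}\ \mathscr{X}(C_0(K_\A))\ \text{with}\ x_n\to 0\ \text{and}\ \psi(x_n)\to b\}$ equals $\{0\}$. As usual $\mathfrak{S}(\psi)$ is a closed two-sided ideal of $\overline{\psi[\mathscr{X}(C_0(K_\A))]}$; item~(2) allows one to factor the null sequences witnessing membership in $\mathfrak{S}(\psi)$, which substitutes for the bounded approximate identity available in the unital case and brings the main boundedness theorem of Bade and Curtis into play; and item~(1) provides, for each operator in $\mathscr{X}(C_0(K_\A))$, a family of mutually orthogonal idempotents $\psi(Q_j)$ through which its image under $\psi$ is compressed, so that the usual stabilisation argument run along the $Q_j$ forces $\mathfrak{S}(\psi)=\{0\}$. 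The only substantive deviation from the classical situation is that $\mathscr{X}(C_0(K_\A))$ is not unital, and this is exactly what item~(2) is designed to absorb.
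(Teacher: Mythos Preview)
Your arguments for parts~(1) and~(2) are correct and are, up to notation, exactly what the paper does: route everything through a fixed complemented copy of~$c_0$, split that copy into infinitely many mutually orthogonal copies for~(1), and for~(2) pass the whole null sequence through a single $c_0$-subspace containing all the ranges.

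For part~(3) your outline is morally right but unnecessarily indirect, and one reference is off. The paper does not go via the separating space; instead it works with the \emph{continuity ideal}
\[
\mathscr{I}=\{S\in\mathscr{X}(C_0(K_\A)):\ T\mapsto\theta(ST)\ \text{and}\ T\mapsto\theta(TS)\ \text{are continuous}\}
\]
and uses Willis's Proposition~7 directly: if $T$ is compressible then $RTT'\in\mathscr{I}$ for all $R,T'\in\mathscr{X}(C_0(K_\A))$. Applying part~(2) twice factors any $S\in\mathscr{X}(C_0(K_\A))$ as $S=RTT'$ with $T\in\mathscr{X}(C_0(K_\A))$ compressible by part~(1), so $\mathscr{I}=\mathscr{X}(C_0(K_\A))$; one more application of~(2) then finishes the continuity proof in two lines. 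This packaging is shorter than the stabilisation/separating-space route you sketch, and it makes transparent why~(2) is needed (to supply the triple factorisation that feeds into Willis's proposition, compensating for the lack of an identity). Finally, the ``main boundedness theorem of Bade and Curtis'' is not the relevant tool here---that result concerns homomorphisms from commutative $C^*$-algebras; what underlies Willis's Proposition~7 is Sinclair's stability lemma for the separating space, which is presumably what you meant by ``the usual stabilisation argument''.
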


\begin{proof} Since~$C_0(K_\A)$ contains a complemented copy of~$c_0$, we can find bounded linear operators $U\colon C_0(K_\A)\to c_0$ and $V\colon c_0\to C_0(K_\A)$ such that $UV = \Id_{c_0}$.  

  (\ref{AutoContinuityLemma1}). Suppose that $T\in\mathscr{X}(C_0(K_\A))$. We shall show that Definition~\ref{Defncompress} is satisfied for $n=1$. The fact that~$c_0$ is isomorphic to the $c_0$-direct sum of countably many copies of itself means that we can find a sequence $(P_j)$ of projections in~$\BB(c_0)$ such that $P_jP_k=0$ whenever $j,k\in\N$ are distinct and $P_j[c_0]\sim c_0$ for every $j\in\N$. Then  the operators $Q_j = VP_jU\in\BB(C_0(K_\A))$ for $j\in\N$ are projections which satisfy part~\ref{Defncompress1} of Definition~\ref{Defncompress} because  $UV = \Id_{c_0}$, and this fact also implies that $Q_j[C_0(K_\A)]\sim P_j[c_0]\sim c_0$ for every  $j\in\N$.

To verify part~\ref{Defncompress2} of Definition~\ref{Defncompress}, fix $j\in\N$.
  By Lemma~\ref{jl-c0}, we can find a closed subspace~$\mathcal{Y}$ of~$C_0(K_\A)$ such that $T[C_0(K_\A)]\subseteq\mathcal{Y}$ and $\mathcal{Y}\sim c_0$, and  hence there is an isomorphism $W\colon\mathcal{Y}\to Q_j[C_0(K_\A)]$. Define $R = JWT\in\BB(C_0(K_\A))$ and $S = J'W^{-1}Q_j\in\BB(C_0(K_\A))$, where $J\colon Q_j[C_0(K_\A)]\to C_0(K_\A)$ and $J'\colon\mathcal{Y}\to C_0(K_\A)$ are the inclusions. Then it is easy to see that $T = SQ_jR$, as desired. (Alternatively, this can be shown using Proposition~1 in~\cite{willis}.) 
   
  (\ref{AutoContinuityLemma2}). Let $(T_j)$ be a null sequence in  $\mathscr{X}(C_0(K_\A))$. Since $\bigcup_{j\in\N} T_j[C_0(K_\A)]$ spans a separable subspace of~$C_0(K_\A)$, Lemma~\ref{jl-c0} implies that there is a closed subspace~$\mathcal{Y}$ of~$C_0(K_\A)$ such that $\bigcup_{j\in\N} T_j[C_0(K_\A)]\subseteq\mathcal{Y}$ and $\mathcal{Y}\sim c_0$. Let $W\colon\mathcal{Y}\to c_0$ be an isomorphism, and define $R_j = VWT_j\in\mathscr{X}(C_0(K_\A))$ for $j\in\N$ and $S = JW^{-1}U\in\mathscr{X}(C_0(K_\A))$, where $J\colon\mathcal{Y}\to C_0(K_\A)$ is the inclusion and we recall that the operators~$U$ and~$V$ were chosen at the beginning of the proof. Then $SR_j = T_j$ for every $j\in\N$ because $UV = \Id_{c_0}$,  and 
$\lVert R_j\rVert\leq\lVert V\rVert\,\lVert W\rVert\,\lVert T_j\rVert\to 0$ as $j\to\infty$,  as desired.   

  (\ref{AutoContinuityLemma3}). Let $\theta\colon\mathscr{X}(C_0(K_\A))\to\mathscr{C}$ be a homomorphism into a Banach algebra~$\mathscr{C}$, and consider the associated continuity ideal, which is defined by
\begin{align*} \mathscr{I} = \{ S\in\mathscr{X}(C_0(K_\A)) :\  &\text{the maps}\ T\mapsto\theta(ST),\,\mathscr{X}(C_0(K_\A))\to\mathscr{C},\\ &\text{and}\  T\mapsto\theta(TS),\, \mathscr{X}(C_0(K_\A))\to\mathscr{C},\ \text{are continuous}\}. \end{align*}
Suppose that $S\in\mathscr{X}(C_0(K_\A))$.
Applying part~(\ref{AutoContinuityLemma2}) to the trivial null sequence
$(S,0,0,\ldots)$ in $\mathscr{X}(C_0(K_\A))$, we see that~$S$ can be written as
$S = RR'$ for some $R,R'\in\mathscr{X}(C_0(K_\A))$.  The same argument applied to~$R'$ instead of~$S$ shows that $R' = TT'$ for some $T,T'\in\mathscr{X}(C_0(K_\A))$. By Proposition~7  of~\cite{willis}, we have \mbox{$RTT'\in\mathscr{I}$} whenever  $T\in\BB(C_0(K_\A))$ is compressible and $R,T'\in\mathscr{X}(C_0(K_\A))$.
Since every
operator~$T$ in~$\mathscr{X}(C_0(K_\A))$ is compressible
by part~(\ref{AutoContinuityLemma1}), we conclude that \mbox{$S = RTT'\in\mathscr{I}$}.

We can now show that~$\theta$ is continuous. Suppose that $(T_j)$ is a null sequence in
$\mathscr{X}(C_0(K_\A))$. By
part~(\ref{AutoContinuityLemma2}), we can find
$S\in\mathscr{X}(C_0(K_\A))$ and a null sequence~$(R_j)$
in~$\mathscr{X}(C_0(K_\A))$ such that $T_j = SR_j$ for every
$j\in\N$. As shown above, we have $S\in\mathscr{I}$, so the map
$R\mapsto\theta(SR),$ $\mathscr{X}(C_0(K_\A))\to\mathscr{C},$ is
continuous, and therefore
\[ \theta(T_j) = \theta(SR_j)\to\theta(S\circ 0)=0\quad\text{as}\quad j\to\infty. \qedhere \]
\end{proof}

\begin{corollary}\label{CorAutoCty}
 Let $\A\subseteq\sub$ be an almost disjoint family such that the ideal $\mathscr{X}(C_0(K_\A))$ has finite codimension in~$\BB(C_0(K_\A))$. Then every homomorphism from~$\BB(C_0(K_\A))$ into a Banach algebra is continuous. 
\end{corollary}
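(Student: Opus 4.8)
The plan is to deduce the corollary quickly from part~(\ref{AutoContinuityLemma3}) of Lemma~\ref{AutoContinuityLemma}, which already settles automatic continuity on the ideal $\mathscr{X}(C_0(K_\A))$, by splitting off a finite-dimensional complement. So I would fix a homomorphism $\theta\colon\BB(C_0(K_\A))\to\mathscr{C}$ into a Banach algebra~$\mathscr{C}$. Since $\mathscr{X}(C_0(K_\A))$ is a closed subalgebra (in fact a closed two-sided ideal) of $\BB(C_0(K_\A))$, the restriction $\theta|_{\mathscr{X}(C_0(K_\A))}$ is a homomorphism from $\mathscr{X}(C_0(K_\A))$ into~$\mathscr{C}$, hence continuous by part~(\ref{AutoContinuityLemma3}) of Lemma~\ref{AutoContinuityLemma}.

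Next I would exploit the finite-codimension hypothesis. Choose operators $T_1,\dots,T_m\in\BB(C_0(K_\A))$ whose cosets form a basis of the finite-dimensional quotient $\BB(C_0(K_\A))/\mathscr{X}(C_0(K_\A))$, put $F=\operatorname{span}\{T_1,\dots,T_m\}$, and note that $\BB(C_0(K_\A))=\mathscr{X}(C_0(K_\A))\oplus F$ as a vector space, with an associated \emph{bounded} projection $Q\colon\BB(C_0(K_\A))\to\mathscr{X}(C_0(K_\A))$: indeed, the quotient map onto $\BB(C_0(K_\A))/\mathscr{X}(C_0(K_\A))$ is bounded, the linear isomorphism of that finite-dimensional quotient onto~$F$ is automatically bounded, and $\operatorname{Id}-Q$ is the composite of these two. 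Writing $T=QT+(\operatorname{Id}-Q)T$ and using linearity of~$\theta$, I get $\theta(T)=\theta(QT)+\theta\bigl((\operatorname{Id}-Q)T\bigr)$. Here the map $T\mapsto\theta(QT)$ is bounded, being the composition of the bounded operator~$Q$ with the continuous homomorphism $\theta|_{\mathscr{X}(C_0(K_\A))}$; and the map $T\mapsto\theta\bigl((\operatorname{Id}-Q)T\bigr)$ is bounded because $\operatorname{Id}-Q$ maps boundedly into the finite-dimensional space~$F$, on which every linear map — in particular $\theta|_F$ — is continuous. Thus $\theta$ is a sum of two bounded linear maps, hence bounded.

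There is essentially no obstacle beyond this bookkeeping: all the substantive content has been absorbed into Lemma~\ref{AutoContinuityLemma}, and the role of the finite-codimension assumption is precisely to transfer continuity from the ideal to the whole algebra (for the family produced by Theorem~\ref{main-theorem} the codimension is in fact~$1$, since $\BB(C_0(K_\A))=\K\operatorname{Id}+\mathscr{X}(C_0(K_\A))$). If one prefers to bypass the explicit splitting, the same argument runs via the standard fact that a linear map between Banach spaces is continuous as soon as it sends null sequences to null sequences: if $T_n\to0$ in $\BB(C_0(K_\A))$, then $QT_n\to0$ in $\mathscr{X}(C_0(K_\A))$ and $(\operatorname{Id}-Q)T_n\to0$ in the finite-dimensional space~$F$, so that $\theta(T_n)=\theta(QT_n)+\theta\bigl((\operatorname{Id}-Q)T_n\bigr)\to0$.
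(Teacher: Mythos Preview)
Your proof is correct and follows exactly the paper's approach: restrict $\theta$ to the ideal $\mathscr{X}(C_0(K_\A))$, invoke part~(\ref{AutoContinuityLemma3}) of Lemma~\ref{AutoContinuityLemma}, and then use that $\mathscr{X}(C_0(K_\A))$ is closed of finite codimension to extend continuity to all of $\BB(C_0(K_\A))$. The paper simply asserts this last step in one line, whereas you spell out the complementary finite-dimensional subspace and bounded projection; this added detail is fine and changes nothing substantive.
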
  

\begin{proof}
  Let $\theta$ be a homomorphism from~$\BB(C_0(K_\A))$
  into a Banach algebra.
  Its re\-stric\-tion to~$\mathscr{X}(C_0(K_\A))$  is continuous by part~(\ref{AutoContinuityLemma3}) of  Lemma~\ref{AutoContinuityLemma}, and hence~$\theta$ is continuous because~$\mathscr{X}(C_0(K_\A))$ is closed and has finite codimension in~$\BB(C_0(K_\A))$.  
\end{proof}  

\begin{remark}\label{discontinuous}
  None of the Banach spaces~$\X$ such that $\BB(\X)$ admits a
  discontinuous homomorphism into a Banach algebra that we described
  at the beginning of this subsection have the form~$C(K)$ for a
  compact Hausdorff space~$K$. Hence, one may ask whether every
  homomorphism from~$\BB(C(K))$ into a Banach algebra is continuous
  whenever $K$ is a compact Hausdorff space.  The following example
  shows that under~\textsf{CH} this is not true. We do not know of such an example within \textsf{ZFC}.

  The first named author has constructed a compact Hausdorff space~$K$ such that every
bounded linear operator on~$C(K)$ is the sum of a multi\-pli\-ca\-tion
opera\-tor and a weakly compact operator (see Theorem~6.1 of~\cite{few}). Moreover, $K$ can be chosen
without isolated points, in which case the quotient of~$\BB(C(K))$
by the ideal of weakly compact operators is isomorphic
to~$C(K)$ as a Banach alge\-bra by Theorem~6.5(i) of \cite{dkkkl}. Hence  there is a surjective, continuous homomorphism~$\pi\colon \BB(C(K))\to C(K)$. A~famous result of Dales~\cite{HGDajm} and Esterle~\cite{esterle} states that under~\textsf{CH}, $C(K)$ admits a discontinuous homomorphism~$\theta$ into a Banach algebra  whenever~$K$ is an infinite compact Hausdorff space. Since~$\pi$ is surjective, it follows that the composition $\theta\circ\pi$ is a discontinuous homomorphism from~$\BB(C(K))$ into a Banach algebra.

Note that
the original
construction in~\cite{few} of the space~$K$ also assumed~\textsf{CH}, but Ple\-ba\-nek~\cite{plebanek} has
subsequently modified it to remove that assumption.
A survey of this body of work is given in~\cite{fewsur}.
\end{remark}

\subsection{Banach spaces with few operators and characters on $\BB(\X)$}\label{few}
The question ``Which kinds of bound\-ed linear operators must exist on
a Ba\-nach space?'' has a long history, culminating in the
spectacular resolution of the ``scalar-plus-compact'' problem   a decade
ago by Argyros and Haydon~\cite{argyros-haydon}, who produced a Banach
space on which every bounded linear operator is a compact perturbation
of a scalar multiple of the identity.
A key ingredient, and the
seminal result in this line of research, is the construction by Gowers
and Maurey~\cite{gowers-maurey} of a Banach space on which every
bounded linear operator is a strictly singular perturbation of a
scalar multiple of the identity,  where we recall that a bounded linear
operator is \emph{strictly singular} if no restriction of it to an
infinite-dimensional subspace is an isomorphism onto its range. We refer to~\cite{maurey} for a much more detailed survey of the work of Gowers and Maurey and its context. 

Perhaps the earliest construction of a Banach space with ``few
operators'' is due to Shelah~\cite{sh}, who found a nonseparable
Banach space on which every bounded linear operator is the sum of a
scalar multiple of the identity and an operator with separable
range. Shelah's original example relied on an additional set-theoretic
axiom,~$\diamondsuit$, but this assumption was later removed by
Shelah and Stepr\={a}ns~\cite{ss}.  Wark~\cite{wark1,wark2} has taken
this line of research further by producing a reflexive and, much more
recently, a uniformly convex space with the above property.  Note
that the space $C_0(K_\A)$ from~\cite{mrowka} or
Theorem~\ref{main-theorem} is another instance of a nonseparable Banach space with the property that every
bounded linear operator on it is the sum of a scalar multiple of the
identity and an operator with separable range.

The above-mentioned Banach spaces of Argyros and Haydon, Gowers and
Maurey, Shelah, Stepr\={a}ns and Wark
all have very complex definitions. By contrast, Banach spaces of the
form~$C(K)$ for a compact Hausdorff space~$K$ are among the simplest
Banach spaces that one can define, so a natural question is: To what extent
can a $C(K)$-space have few operators in any similar sense?

To make this question more precise, we observe that a common feature
of all the different variants of Banach spaces~$\mathcal{X}$ with
few operators that we have described above is that the Banach
algebra~$\BB(\mathcal{X})$ has a maximal ideal of codimension one,
and therefore the quotient map induces a \emph{character}
on~$\BB(\mathcal{X})$, that is, a nonzero
linear functional \mbox{$\varphi: \BB(\X)\rightarrow\K$} which is multiplicative
in the sense that $\varphi(TS)=\varphi(T)\varphi(S)$ for all
$S,T\in\BB(\X)$. Clearly, the kernel of a character is an  ideal
of codimension one, thus a maximal ideal, and it is closed.

Characters 
on the Banach algebra~$\BB(\mathcal X)$ are not common  due to the following result (see, e.g., Theorem~2.5.11 of~\cite{HGD}):

\begin{theorem}\label{Nocharsoncartesianspaces} 
Suppose that $\mathcal X$ is a Banach space which is isomorphic to its square $\mathcal{X}\oplus \mathcal{X}$.
  Then $\BB(\mathcal{X})$ has no closed two-sided ideals of finite codimension,
  and hence there are no char\-ac\-ters on~$\BB(\X)$.
\end{theorem}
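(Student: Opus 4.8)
The result is classical; the plan is to convert the hypothesis $\X\sim\X\oplus\X$ into a purely algebraic statement about $\BB(\X)$ and then carry out a dimension count. Write $R=\BB(\X)$. First I would fix a decomposition $\X=Y_1\oplus Y_2$ into complemented subspaces with $Y_1\sim Y_2\sim\X$; letting $\iota_i\colon Y_i\hookrightarrow\X$ and $\pi_i\colon\X\to Y_i$ be the associated inclusions and projections, and $\theta_i\colon\X\to Y_i$ isomorphisms, the operators $U_i=\iota_i\theta_i$ and $V_i=\theta_i^{-1}\pi_i$ in $R$ satisfy the ``matrix-unit'' relations $V_iU_j=\delta_{ij}\Id$ for $i,j\in\{1,2\}$ and $U_1V_1+U_2V_2=\Id$. (This uses only $\pi_i\iota_i=\Id_{Y_i}$, $\pi_i\iota_j=0$ for $i\ne j$, and $\iota_1\pi_1+\iota_2\pi_2=\Id$.) In particular the elements $P_i=U_iV_i$ are idempotents with $P_1P_2=P_2P_1=0$ and $P_1+P_2=\Id$.

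Next, suppose $\mathcal I\subseteq R$ is a two-sided ideal with $d:=\dim_\K(R/\mathcal I)<\infty$, and pass to the unital $\K$-algebra $A=R/\mathcal I$; denote the images of $U_i,V_i,P_i$ by $u_i,v_i,p_i$, so the same relations hold in $A$: $v_iu_j=\delta_{ij}1_A$, $u_1v_1+u_2v_2=1_A$, $p_i=u_iv_i$. I would then observe that, for each $i$, the maps $a\mapsto u_ia$ and $b\mapsto v_ib$ are mutually inverse isomorphisms of right $A$-modules between $A$ and $p_iA$ (injectivity and surjectivity follow from $v_iu_i=1_A$, $p_iu_i=u_i$, and $u_iv_ix=p_ix=x$ for $x\in p_iA$), while $A=p_1A\oplus p_2A$ because $p_1+p_2=1_A$ and $p_1p_2=0$. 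Hence $A\cong p_1A\oplus p_2A\cong A\oplus A$ as right $A$-modules, and comparing $\K$-dimensions gives $d=2d$, so $d=0$, i.e.\ $\mathcal I=R$. Thus $\BB(\X)$ has no proper two-sided ideal of finite codimension, and in particular no closed one.

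Finally, if $\varphi\colon\BB(\X)\to\K$ were a character, then $\ker\varphi$ would be a two-sided ideal with $\BB(\X)/\ker\varphi\cong\K$, hence of codimension one, contradicting the previous paragraph; so $\BB(\X)$ admits no characters. There is no serious obstacle here: the only points requiring a little care are the bookkeeping with \emph{right} ideals (so that the decomposition $A\cong A\oplus A$ survives passing to the quotient by an arbitrary two-sided ideal) and the observation that ``finite codimension'' means finiteness of the $\K$-dimension of the quotient, which is exactly what makes the $\K$-linear module isomorphisms above do the job.
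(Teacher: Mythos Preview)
Your proof is correct. Note that the paper does not actually supply a proof of this theorem; it simply cites Theorem~2.5.11 of~\cite{HGD}. Your argument---extracting matrix-unit relations $V_iU_j=\delta_{ij}\Id$ and $U_1V_1+U_2V_2=\Id$ from the isomorphism $\X\sim\X\oplus\X$, passing to the quotient $A=\BB(\X)/\mathcal{I}$, and deducing the right $A$-module isomorphism $A\cong A\oplus A$ so that $\dim_\K A=2\dim_\K A$ forces $A=0$---is a standard and complete route, and in fact establishes the slightly stronger statement that no \emph{proper} two-sided ideal of $\BB(\X)$ (closed or not) can have finite codimension.
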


Recall from Subsection~\ref{autocont} that James' quasi-reflexive
space~$\mathcal{J}$ and $C([0,\omega_1])$ were the first
infinite-dimensional Banach spaces shown not to be isomorphic to their
squares. It turns out that both~$\BB(\mathcal{J})$
and~$\BB(C([0,\omega_1]))$ admit a character. In 1969,
Berkson and
Porta~\cite{bp}  identified 
the character on~$\BB(\mathcal{J})$ explicitly. A year later  Edelstein and Mityagin~\cite{em} described  both  characters. However,  the character
on~$\BB(C([0,\omega_1]))$ can already be found im\-pli\-citly in Se\-ma\-de\-ni's paper,
as explained in Propo\-si\-tion~2.5 of~\cite{omega1}, as well as in 
a subsequent paper of Al\-spach and Ben\-ya\-mini~\cite{alspach}. Loy
and Willis~\cite{lw} have studied these two characters in much greater detail. In both cases, $\BB(\X)$ admits only one character, as shown in~\cite{NJLmaxIdeals} and~\cite{kanialaustsen}, respectively. 

The above-mentioned result of Argyros and Haydon
shows the existence of a Banach space~$\mathcal{X}_{\text{AH}}$ such
that~$\BB(\mathcal{X}_{\text{AH}})$ admits a character with the smallest possible
kernel. (Here we are using the fact that the Banach space they constructed has a Schauder basis, so the ideal of finite-rank
operators, which is contained in every non\-zero two-sided ideal
of~$\BB(\mathcal{X})$, is dense in the ideal of compact operators.) This result, as well as that of
Gowers and Maurey, has no direct analogue for
$C(K)$-spaces, as the following proposition shows.

\begin{proposition}\label{BCK/SCKinfdim}  Suppose that $K$ is an infinite compact Hausdorff space. Then the ideal of strictly singular operators has infinite codimension in~$\BB(C(K))$.
\end{proposition}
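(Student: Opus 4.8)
The plan is to prove that the quotient algebra $\BB(C(K))/\SSS$ is infinite-dimensional, where $\SSS=\SSS(C(K))$ denotes the (closed, two-sided) ideal of strictly singular operators on $C(K)$. I would split the argument according to whether or not $K$ is scattered, observing first that, since $K$ is infinite and compact, it cannot be discrete, so its set $K'$ of non-isolated points is nonempty.

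\emph{Case 1: $K$ is not scattered.} Then $K$ contains a nonempty perfect subset, and since every point of such a subset is non-isolated in $K$, the set $K'$ is infinite; being an infinite Hausdorff space it therefore contains a countably infinite discrete subspace $\{q_n:n\in\N\}$. Using the normality of $K$ I would choose pairwise disjoint open sets $V_n\ni q_n$---each infinite, being a neighbourhood of the non-isolated point $q_n$---together with open sets $W_n$ satisfying $q_n\in W_n\subseteq\overline{W_n}\subseteq V_n$, and then invoke Urysohn's lemma to obtain $g_n\in C(K)$ with $0\le g_n\le 1$, $g_n\equiv 1$ on $\overline{W_n}$ and $g_n\equiv 0$ off $V_n$. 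The claim is that the multiplication operators $M_{g_n}\colon f\mapsto g_nf$ on $C(K)$ are linearly independent modulo $\SSS$. Indeed, suppose that $g=\sum_{n=1}^N c_ng_n$ with $c_{n_0}\ne0$ and that $M_g\in\SSS$. The functions $g_n$ have pairwise disjoint cozero sets contained in the disjoint sets $V_n$, so $g$ is identically equal to $c_{n_0}$ on $\overline{W_{n_0}}$; hence $M_g$ acts as $c_{n_0}\Id$ on the closed subspace $\mathcal Z=\{f\in C(K):f\equiv 0\text{ on }K\setminus W_{n_0}\}$. Since $W_{n_0}$ is an open set containing the non-isolated point $q_{n_0}$, it is an infinite Hausdorff space, so it contains infinitely many pairwise disjoint nonempty open subsets with closures (in $K$) lying in $W_{n_0}$; bump functions supported on these sets show $\mathcal Z$ to be infinite-dimensional. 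Thus $M_g$ is bounded below on an infinite-dimensional subspace, which is impossible for a strictly singular operator. This contradiction proves the claim, so $\BB(C(K))/\SSS$ is infinite-dimensional.

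\emph{Case 2: $K$ is scattered.} As recalled in the introduction, $C(K)$ then contains a complemented copy of $c_0$ (because $K$ is a scattered, compact---hence locally compact---Hausdorff space); fix bounded operators $j\colon c_0\to C(K)$ and $q\colon C(K)\to c_0$ with $qj=\Id_{c_0}$. Then $\Phi\colon\BB(c_0)\to\BB(C(K))$, $\Phi(T)=jTq$, is a bounded linear map with $q\,\Phi(T)\,j=T$; it carries $\SSS(c_0)$ into $\SSS$, since a strictly singular operator composed on either side with bounded operators remains strictly singular, and conversely $\Phi(T)\in\SSS$ forces $T=q\,\Phi(T)\,j\in\SSS(c_0)$ by the same ideal property. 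Hence $\Phi$ induces an injective linear map $\BB(c_0)/\SSS(c_0)\to\BB(C(K))/\SSS$. Since $c_0$ is isomorphic to $c_0\oplus c_0$, Theorem~\ref{Nocharsoncartesianspaces} guarantees that $\BB(c_0)$ has no closed two-sided ideal of finite codimension, so $\BB(c_0)/\SSS(c_0)$, and therefore $\BB(C(K))/\SSS$, is infinite-dimensional.

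The main obstacle, I expect, is the scattered case: one cannot manufacture the required operators by multiplication there (when $K'$ is finite the multiplication operators span a subspace of $\BB(C(K))/\SSS$ of dimension at most $\lvert K'\rvert$, as one sees already for $K=\alpha\N$), so the argument has to be routed through a complemented copy of $c_0$ and the absence of finite-codimensional closed ideals in $\BB(c_0)$. A secondary technical point requiring care is the infinite-dimensionality of the subspace $\mathcal Z$ in Case~1, which rests on the elementary fact that any neighbourhood of a non-isolated point, being an infinite Hausdorff space, supports infinitely many pairwise disjoint nonempty open sets---with, after shrinking by regularity, closures inside the neighbourhood.
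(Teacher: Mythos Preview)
Your proof is correct, and both arguments rest on the same two ideas---multiplication by bump functions centred at non-isolated points, and a reduction to~$c_0$---but you split the cases differently. The paper divides according to whether $K'$ is finite or infinite: when $K'$ is infinite it runs essentially your Case~1 argument (phrased more tersely, via the observation that if $M_f$ is strictly singular then $\{|f|\ge\varepsilon\}$ is finite for every $\varepsilon>0$, forcing $f|K'=0$), and when $K'$ is finite it simply identifies $C(K)$ with $c_0(K_d)$ and appeals to the result for~$c_0$ as known. You instead split scattered versus non-scattered and, for the scattered case, embed $\BB(c_0)/\SSS(c_0)$ into the quotient via a complemented copy of~$c_0$, then invoke Theorem~\ref{Nocharsoncartesianspaces} to see that this quotient is infinite-dimensional. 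Your route is more self-contained within the paper, since the $c_0$ fact is derived from a stated theorem rather than asserted; the price is that you handle all scattered $K$ (even those with $K'$ infinite, where the multiplication-operator argument would already apply) through the $c_0$ detour. One minor technical point: to obtain the pairwise disjoint open sets $V_n\ni q_n$ with $q_n\in K'$, it is cleanest to work with an accumulation point of~$K'$ (which exists since $K'$ is infinite and compact) and peel off the $q_n$ recursively inside shrinking neighbourhoods of that point, rather than first choosing a discrete subspace and then separating---the latter is not automatic even in normal spaces.
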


\begin{proof} We split in two cases. If the set~$K'$ of  nonisolated points in~$K$ is finite, then \[ C(K)\sim \ell_\infty(K')\oplus c_0(K\setminus K')\sim c_0(K_d), \] where~$K_d$ denotes the set~$K$ endowed with the discrete topology. Hence the conclusion follows from the fact that the  result is true for~$c_0$. 

Otherwise~$K'$ is infinite, and we can recursively construct a sequence $(x_n)$ in~$K'$ and disjoint open sets~$(U_n)$ such that $x_n\in U_n$ for every $n\in\N$. Choose a function $f_n\in C(K)$ such that $f_n(x_n) =1$ and $\operatorname{supp}f_n\subseteq U_n$ for every \mbox{$n\in\N$}. We shall now complete the proof by showing that the multiplication operators \mbox{$M_{f_n}\colon  C(K)\to C(K)$} defined by $M_{f_n}(g)=f_ng$ are linearly independent over the ideal of strictly singular operators. To this end,  suppose that  $S = \sum_{j=1}^n \lambda_j M_{f_j}$ is a strictly singular operator for some $n\in\N$ and some scalars $\lambda_1,\ldots,\lambda_n$. We observe that~$S$ is equal to the multiplication operator~$M_f$, where \mbox{$f=  \sum_{j=1}^n \lambda_j f_j\in C(K)$}. The strict singularity of $S=M_f$ means that the set $\{ x\in K  :\lvert f(x)\rvert\ge\varepsilon\}$ is finite for every $\varepsilon>0$. In particular~$f$ vanishes at every nonisolated point of~$K$, so $0=f(x_j)=\lambda_j$ for each $j\in\{1,\ldots,n\}$, as required. 
\end{proof}

\begin{remark}\label{Remark10042020}
  \begin{enumerate}
\item  Even though we do not need it here, for context it is perhaps worth remarking that
Pe\l\-czy\'n\-ski~\cite{pelczynski} has shown that  for a compact Hausdorff space~$K$, a bounded linear operator from~$C(K)$ into a Banach space is strictly singular if and only if it is weakly compact.
\item As mentioned in Remark~\ref{discontinuous}, the first named author~\cite{few} has constructed a  compact Hausdorff space~$K$  such that every
bounded linear operator on~$C(K)$ is the sum of a multi\-pli\-ca\-tion
operator and a weakly compact operator.  
Obviously,  multi\-pli\-ca\-tion operators
are bounded and linear on any $C(K)$-space, so this space may be viewed as a $C(K)$-space with ``few operators'' in a similar  sense to Gowers and Maurey.
\end{enumerate}
\end{remark}

We observe that $\BB(C(K))$ has no
characters when~$K$ is a
metrizable compact space with at least two points because $\BB(C(K))$ is isomorphic to the algebra of scalar $K\times K$ 
matrices if~$K$ is finite, and $C(K)$ is isomorphic to its square otherwise. On the other hand, the map $\varphi$
given by $\varphi(\lambda \Id+S)=\lambda$ is an example of a
character on~$\BB(C_0(K_\A))$, where $\A\subseteq\sub$, $\lambda\in\K$ and
$S\in\mathscr{X}(C_0(K_\A))$ are as in
Theorem~\ref{main-theorem}.

We shall next show that the kernel of this character is as small as possible; see part~(\ref{remarkSmallKer2}) 
of Remark~\ref{remarkSmallKer} below
for the precise meaning of this statement. It in\-volves two additional pieces of notation and terminology concerning  a Banach space~$\X$: First, we write $\mathscr{G}_{c_0}(\X)$ for the set of bounded linear operators on~$\X$ which factor through~$c_0$. This is a two-sided ideal of~$\BB(\X)$, but not in general closed. It is always contained in the ideal~$\mathscr{X}(\X)$ of operators with separable range, which is closed.  When $\X = C_0(K_\A)$ or $\X = C(\alpha K_\A)$ for an almost disjoint family \mbox{$\A\subseteq\sub$}, we recall from Lemma~\ref{jl-c0} that these two ideals are equal, so in particular~$\mathscr{G}_{c_0}(\X)$ is closed in these cases. Second, 
$\X$ is a \emph{Grothendieck space} if
every weak*-con\-vergent sequence in the dual space $\X^*$ converges
weakly.

\begin{proposition}\label{kernelofcharBCK}
  Suppose that $K$ is an infinite compact Hausdorff space and that~$\varphi$
is a character on~$\BB(C(K))$. Then  
$\mathscr{G}_{c_0}(C(K))\subseteq\ker\varphi$.

If $C(K)$ is a Grothendieck space, then 
$\mathscr{X}(C(K))$, and thus $\mathscr{G}_{c_0}(C(K))$, is contained in the ideal of strictly singular operators, and so $\mathscr{X}(C(K))$ and $\mathscr{G}_{c_0}(C(K))$ have infinite codimension in~$\BB(C(K))$.
\end{proposition}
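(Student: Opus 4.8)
The plan is to prove the two assertions in turn, using as the main engine Theorem~\ref{Nocharsoncartesianspaces}: since $c_0\cong c_0\oplus c_0$, the algebra $\BB(c_0)$ admits no character. I shall also use repeatedly the elementary facts that a character $\varphi$ on a unital Banach algebra is automatically continuous, is a trace in the weak sense $\varphi(AB)=\varphi(BA)$ (the scalars commute), and sends idempotents into $\{0,1\}$; and that $\varphi$ annihilates the finite-rank operators on any infinite-dimensional $C(K)$, because $\mathscr{F}(C(K))$ is a simple ring, so a nonzero multiplicative linear functional on it would be injective into $\K$, which is absurd. By continuity and the approximation property of $C(K)$ it then annihilates $\mathscr{K}(C(K))=\overline{\mathscr{F}(C(K))}$.

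For the first assertion, fix $T\in\mathscr{G}_{c_0}(C(K))$ and a factorisation $T=VU$ with $U\in\BB(C(K),c_0)$ and $V\in\BB(c_0,C(K))$, and set $W=UV\in\BB(c_0)$. If $W$ is compact, then $T^2=VWU$ is compact, so $\varphi(T)^2=\varphi(T^2)=0$ and $\varphi(T)=0$. If $W$ is not compact, then it is not weakly compact (on $c_0$ the two notions coincide, by Gantmacher's theorem and the Schur property of $\ell_1$), so by a classical theorem of Pe\l{}czy\'nski together with Sobczyk's theorem, $W$ restricts to an isomorphism on a subspace isomorphic to $c_0$ whose image is complemented in $c_0$; this provides $j,k\in\BB(c_0)$ with $kWj=\Id_{c_0}$. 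Using this identity one checks that $\Lambda\colon\BB(c_0)\to\BB(C(K))$, $\Lambda(R)=VjRkU$, is an algebra homomorphism with $\Lambda(\Id_{c_0})=VjkU$ a bounded idempotent, so $\psi:=\varphi\circ\Lambda$ is multiplicative with $\psi(\Id_{c_0})=\varphi(VjkU)\in\{0,1\}$; were this value~$1$, $\psi$ would be a character on $\BB(c_0)$, contradicting Theorem~\ref{Nocharsoncartesianspaces}, so $\psi\equiv 0$. Since $kWj=\Id_{c_0}$ gives $T=(VkU)(VjU)$, the trace property yields $\varphi(T)=\varphi\bigl((VjU)(VkU)\bigr)=\varphi(VjWkU)=\psi(W)=0$, as required.

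For the second assertion, assume $C(K)$ is a Grothendieck space and take $T\in\mathscr{X}(C(K))$. Then $T$ factors through its closed range, which is separable, and every bounded linear operator from a Grothendieck space into a separable Banach space is weakly compact (apply Banach--Alaoglu and the Eberlein--\v{S}mulian theorem to the adjoint, using that weak$^*$-convergent sequences in $C(K)^*$ are weakly convergent). Hence $T$ is weakly compact, and so strictly singular by Pe\l{}czy\'nski's theorem recalled in Remark~\ref{Remark10042020}(1). Therefore $\mathscr{G}_{c_0}(C(K))\subseteq\mathscr{X}(C(K))$ is contained in the ideal of strictly singular operators, which has infinite codimension in $\BB(C(K))$ by Proposition~\ref{BCK/SCKinfdim}; a fortiori $\mathscr{X}(C(K))$ and $\mathscr{G}_{c_0}(C(K))$ have infinite codimension in $\BB(C(K))$.

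The substantive point, which I expect to be the main obstacle, is the first assertion in the case where $C(K)$ contains no complemented copy of $c_0$ (equivalently, where $UV$ is compact for every factorisation): one cannot ``disjointify'' the factorisation $T=VU$ directly, since $UV$ need not vanish, and the resolution is precisely that compactness of $UV$ forces $T^2$ to be compact. In the opposite case one recovers a complemented copy of $c_0$ inside $C(K)$, after which the homomorphism $\Lambda$ transports the question into $\BB(c_0)$, where the absence of characters finishes it. All remaining steps -- the elementary properties of $\varphi$, the structure theory of operators on $c_0$, and the codimension count -- are routine.
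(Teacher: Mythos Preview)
Your proof is correct, and for the second assertion it is essentially the paper's argument (Diestel's result that operators with separable range on a Grothendieck $C(K)$-space are weakly compact, then Pe\l{}czy\'nski to pass to strict singularity, then Proposition~\ref{BCK/SCKinfdim} for the codimension count).

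For the first assertion, however, you take a genuinely different route. The paper argues by a global dichotomy on~$K$: either $C(K)$ is a Grothendieck space, in which case every operator with separable range is strictly singular and one quotes the external result (Proposition~6.6 of~\cite{NJLmaxIdeals}) that any maximal two-sided ideal of~$\BB(\X)$ contains the strictly singular operators; or $C(K)$ is not Grothendieck, in which case Schachermayer's theorem provides a fixed projection~$P$ onto a complemented copy of~$c_0$, $\varphi(P)=0$ by Theorem~\ref{Nocharsoncartesianspaces}, and every $T\in\mathscr{G}_{c_0}(C(K))$ factors through~$P$. By contrast, you work operator by operator: given $T=VU$, you split on whether $W=UV\in\BB(c_0)$ is compact, using that $T^2=VWU$ is then compact, or else invert~$W$ on a complemented copy of~$c_0$ via Pe\l{}czy\'nski--Sobczyk and transport the problem into~$\BB(c_0)$ through the homomorphism $R\mapsto VjRkU$. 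Your argument is more self-contained in that it avoids both Schachermayer's theorem and the cited result on maximal ideals; the price is the extra piece of $c_0$-structure theory (inverting a non-compact operator on~$c_0$) and the observation that~$\varphi$ kills compacts, which you justify cleanly via simplicity of~$\mathscr{F}(C(K))$ and the approximation property. The paper's approach, on the other hand, yields in the Grothendieck case the stronger information $\mathscr{X}(C(K))\subseteq\ker\varphi$ as a by-product of the same calculation, whereas in your proof this would require a separate appeal to the strictly-singular result after the second assertion.
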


\begin{proof}
  We split the proof in two parts, beginning with the case
  where~$C(K)$ is a Grothen\-dieck space.
 In this case Diestel~\cite{gdiestel} has shown  that every operator $S\in\mathscr{X}(C(K))$ is weakly compact and
  therefore strictly singular by the above-mentioned result of
  Pe\l\-czy\'n\-ski~\cite{pelczynski}. Hence  $S\in\ker\varphi$ because Proposition~6.6
  of~\cite{NJLmaxIdeals} implies that, for every infinite-dimensional Banach space~$\X$,
  every maximal two-sided ideal
  of~$\BB(\mathcal{X})$ contains the ideal of strictly singular
  operators. Now the main conclusion follows from the fact that $\mathscr{G}_{c_0}(C(K))\subseteq\mathscr{X}(C(K))$, while the final statement is a consequence of Proposition~\ref{BCK/SCKinfdim}.

It remains to prove that $\mathscr{G}_{c_0}(C(K))\subseteq\ker\varphi$   when~$C(K)$ is not a Grothen\-dieck space. 
Schachermayer~\cite{schachermayer} has shown that in this case~$C(K)$
contains a complemented copy of~$c_0$, that is, $\BB(C(K))$ contains a projection~$P$ whose range is isomorphic to~$c_0$. If $\varphi(P)\ne 0$, then we would obtain a character on~$\BB(c_0)$, which is impossible
by Theorem~\ref{Nocharsoncartesianspaces} because \mbox{$c_0\sim c_0\oplus c_0$}. Hence
$P\in\ker\varphi$. Now the conclusion follows  because every
operator $T\in\mathscr{G}_{c_0}(C(K))$ can be written as $T=SPR$ for
some $R,S\in\BB(C(K))$, so
$\varphi(T)=\varphi(S)\varphi(P)\varphi(R)=0$.
  \end{proof}

\begin{remark}\label{remarkSmallKer}
\begin{enumerate}
\item\label{remarkSmallKer2} The fact that we have equality in the inclusion
  $\mathscr{G}_{c_0}(C(K))\subseteq\ker\varphi$  for $K=\alpha K_\A$, where  $\A\subseteq\sub$ is the almost disjoint family
constructed in~\cite{mrowka} or Theorem~\ref{main-theorem}, justifies the claim that the character~$\varphi$ on $\BB(C(\alpha K_\A))$ has the ``smallest possible kernel'' in these cases. 
\item Consider the class of  Banach spaces of the form~$C(K)$, where~$K$ is a compact Haus\-dorff space whose $\omega_1$'st Cantor--Bendixson derivative is empty and $C(K)$ is a Lindel\"{o}f space in the weak topology. The first named author and Zie\-li\'{n}\-ski have  shown
 that the question ``Does this class contain a space   on which every bounded linear operator is the sum of a scalar multiple  of the identity and an operator with  separable range?'' is undecidable (see  Corollary~3.3 and Theorem~4.1 of~\cite{PKPZ}). Note that this result does not apply to any space of the form~$C(\alpha K_\A)$ for an almost disjoint family $\A\subseteq\sub$ because these spaces are known not to be Lindel\"{o}f in their weak topology.
\item With a little extra work, a stronger form of Proposition~\ref{BCK/SCKinfdim} can be proved, namely that the quotient of~$\BB(C(K))$ by the ideal~$\mathscr{S}(C(K))$ of strictly singular operators is nonseparable for every infinite compact Hausdorff space~$K$. We sketch the argument here.

If~$C(K)$ contains a complemented copy of~$c_0$, then the conclusion follows easily from the fact that the quotient of~$\BB(c_0)$ by the ideal of compact  operators (which is equal to the ideal of strictly singular operators in this case) is nonseparable.

Otherwise  the set~$K'$ of
nonisolated points in~$K$ is infinite, and~$C(K)$ is a Grothendieck space by the above-mentioned result
of Schachermayer~\cite{schachermayer}. Then~$C(K')$ is also a
Grothendieck space, for example because~$C(K')$ is a quotient
of~$C(K)$ by taking restrictions of functions, and being a Grothen\-dieck space passes to quotients. Therefore $C(K')$ is nonseparable, so the conclusion follows from the general fact (true whether or not~$C(K)$ is a Grothendieck space)
that $\lVert M_f-S\rVert\ge\lVert f|K'\rVert$ for every $S\in\mathscr{S}(C(K))$  and $f\in C(K)$.
 \end{enumerate}
\end{remark}
\subsection{What is the quotient of 
$\BB(C_0(K_\A))$ by the ideal of operators with separable range?}\label{section-question}
The question ``Which unital Banach algebras are isomorphic to the
quotient $\BB(\X)/\mathscr{K}(\X)$ for some Banach space~$\X$?'', where $\mathscr{K}(\X)$ denotes the ideal of compact operators, has
received considerable attention since the break-through of Argyros
and Haydon~\cite{argyros-haydon}, who  expressed the scalar field in this way.  Notably,
Mo\-ta\-kis, Pugli\-si and Zi\-si\-mo\-pou\-lou~\cite{mpz} have shown that the algebra~$C(K)$ for every countably
infinite compact Hausdorff space~$K$ can also be realized in this form, while \cite{tarbard}
and~\cite{tomek-niels-ind} contain similar conclusions for certain nonsemisimple 
finite-dimensional algebras.

We propose the
following question as a counterpart of the above question
in the present context, where we recall that for an almost disjoint family $\A\subseteq\sub$, Lemma~\ref{jl-c0} implies that the ideal $\mathscr{X}(C_0(K_\A))$  is equal  to the ideal of  operators which factor through~$c_0$.

\begin{question}\label{main-question} Which unital Banach algebras of density at most~$\mathfrak{c}$ can be isomorphic to the quotient algebra 
  $\BB(C_0(K_\A))/\mathscr{X}(C_0(K_\A))$ for some
  uncountable, almost disjoint family $\A\subseteq\sub$?
\end{question}

\noindent The upper bound on the density of the quotient algebra follows from the fact that $\BB(C_0(K_\A))$ has density~$\mathfrak{c}$.

Theorem~\ref{main-theorem} shows that the above quotient may be one-dimensional, that is, isomorphic to the scalar field~$\K$. Further, for every integer $n\ge 2$, we can iterate our construction to realize the algebra of scalar $n\times n$ matrices as such a quotient by identifying~$\N$ with the disjoint union of~$n$ copies of itself and 
take a copy of the family~$\A$ from Theorem~\ref{main-theorem} in each of these~$n$ copies of~$\N$.

We can obtain more 
specific versions of Question~\ref{main-question} by seeking to realize only the Banach algebras belonging to a particular family, or  simply particular Banach algebras, 
as such quotients. Important cases are: 
\begin{enumerate}[label={\normalfont{(\roman*)}}]
  \item The family of unital, separable $C^*$-algebras.
  \item The family of commutative, unital, separable $C^*$-algebras, that is, algebras of the form~$C(K)$ for
    a compact metric space~$K$.
\item The Banach algebra $\BB(\X)$ for
  some known infinite-dimensional Banach space $\X$ such as $\X=\ell_2$, or more generally $\X=\ell_p$ for some $1\le p<\infty$, $\X = c_0$   or $\X=C([0,1])$.
\end{enumerate}

Note that if we could construct an almost disjoint family
$\A\subseteq [\N]^\omega$  such that the quotient   algebra  $\BB(C_0(K_\A))/\mathscr{X}(C_0(K_\A))$ is isomorphic to~$C(K)$ for some infinite compact Hausdorff space~$K$, then under ${\sf CH}$ we could construct a discontinuous
homomorphism from $\BB(C_0(K_\A))$ into a Banach algebra as in Remark~\ref{discontinuous}. This, together with
Corollary~\ref{CorAutoCty},
motivates the following question.
\begin{question}\label{Q47} Suppose that $\A\subseteq\sub$ is an  uncountable, almost disjoint family. Is every homomorphism from~$\BB(C_0(K_\A))$ into a Banach algebra continuous? 
\end{question}

Note that the corresponding question concerning homomorphisms from~$C(K)$ for an infinite compact Hausdorff space~$K$ is known to be  undecidable in \textsf{ZFC}.\smallskip

There is a considerable body of work which may help shed light on
Ques\-tion~\ref{main-question}, as we shall now explain. Throughout, $\A\subseteq\sub$ denotes an uncountable, almost disjoint family. 
We recall that such a family is called \emph{maximal} if it is not properly contained in any other almost disjoint family in~$\sub$. 

Let $C_b(K_\A)$ be the set of all scalar-valued, bounded continuous
functions defined on~$K_\A$. This is a unital, commutative $C^*$-algebra which is
isomorphic to~$C(\beta K_\A)$ for
the \v{C}ech--Stone compactification~$\beta K_\A$ of $K_\A$. Clearly $C_b(K_\A)$
contains~$C_0(K_\A)$ as a closed ideal.  In the language of $C^*$-algebras, the quotient
$C_b(K_\A)/C_0(K_\A)$ is known as the \emph{corona algebra,} while $C_b(K_\A)$ is the \emph{multiplier algebra} of~$C_0(K_\A)$. 

Every function $g\in C_b(K_\A)$ induces a bounded linear operator~$M_g$ on $C_0(K_\A)$ by
mul\-ti\-pli\-ca\-tion, that is, $M_g$ is given by $M_g(f)=fg$. The map $g\mapsto M_g$ is an iso\-metric algebra homomorphism of~$C_b(K_\A)$ into~$\BB(C_0(K_\A))$. We write~$\MM(C_0(K_\A))$ for the range of this map, which is isometrically isomorphic to~$C_b(K_\A)$ and thus to~$C(\beta K_\A)$.

\begin{lemma}\label{mad} Suppose that 
$\A\subseteq [\N]^\omega$ is a maximal almost disjoint family and that
  $g\in C_b(K_\A)$.  Then $g\in C_0(K_\A)$ if and only if the
  multiplication operator~$M_g$ has separable range.
Consequently  the corona algebra $C_b(K_\A)/C_0(K_\A)$ is isomorphic~to
\[ \frac{\MM(C_0(K_\A))}{\MM(C_0(K_\A))\cap \mathscr{X}(C_0(K_\A))}. \]
\end{lemma}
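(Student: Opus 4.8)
The plan is to establish the biconditional first and then derive the corona-algebra statement as a formal consequence. For the nontrivial direction, suppose $g \in C_b(K_\A)$ is such that $M_g$ has separable range. By Lemma~\ref{separability-lemma}, the set $s(M_g[C_0(K_\A)])$ is countable. Applying $M_g$ to the characteristic functions $1_{U(A)}$ for $A \in \A$, one computes $M_g(1_{U(A)})(y_A) = g(y_A)\cdot 1_{U(A)}(y_A) = g(y_A)$, so $g(y_A) \ne 0$ implies $A \in s(M_g(1_{U(A)})) \subseteq s(M_g[C_0(K_\A)])$. Hence $\{A \in \A : g(y_A) \ne 0\}$ is countable. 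Now I would invoke maximality of $\A$: the idea is that if $g$ did not vanish at infinity on the isolated points $\{x_n : n \in \N\}$ — i.e.\ if $\{n : |g(x_n)| \ge \varepsilon\}$ were infinite for some $\varepsilon > 0$ — then, since this infinite set $C \subseteq \N$ must meet some $A \in \A$ in an infinite set by maximality (or it is itself almost disjoint from every element, contradicting maximality if $C$ is not covered), continuity of $g$ at $y_A$ combined with $\lim_{k \in A} x_k = y_A$ (Lemma~\ref{topological}) would force $g(y_A) = \lim_{k \in A \cap C}g(x_k) \ne 0$; there are only countably many such $A$, but we can keep extracting, and maximality lets us find infinitely many distinct such $A \in \A$, contradicting countability of $\{A : g(y_A) \ne 0\}$. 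This shows $g \in C_0(K_\A)$. The converse direction is immediate: if $g \in C_0(K_\A)$, then $M_g$ factors as $M_g = M_g \circ \Id$ with $M_g[C_0(K_\A)] \subseteq C_0(K_\A) \cdot g$, and more simply $g \in C_0(K_\A)$ together with Lemma~\ref{jl-c0}-type reasoning (or directly: $s(M_g[C_0(K_\A)]) \subseteq s(g)$, which is countable) gives separable range by Lemma~\ref{separability-lemma}.

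For the consequence, I would argue as follows. The map $g \mapsto M_g$ is an isometric algebra homomorphism of $C_b(K_\A)$ onto $\MM(C_0(K_\A))$, carrying the closed ideal $C_0(K_\A)$ onto $\MM(C_0(K_\A)) \cap \mathscr{X}(C_0(K_\A))$: indeed $M_g \in \mathscr{X}(C_0(K_\A))$ means $M_g$ has separable range, which by the biconditional just proved is equivalent to $g \in C_0(K_\A)$, i.e.\ to $M_g \in \{M_h : h \in C_0(K_\A)\}$. Since $\mathscr{X}(C_0(K_\A))$ is closed and $\MM(C_0(K_\A))$ is closed (being the isometric image of the Banach algebra $C_b(K_\A)$), the intersection is a closed ideal of $\MM(C_0(K_\A))$, and the induced map on quotients
\[
\frac{C_b(K_\A)}{C_0(K_\A)} \longrightarrow \frac{\MM(C_0(K_\A))}{\MM(C_0(K_\A)) \cap \mathscr{X}(C_0(K_\A))}
\]
is an isometric algebra isomorphism. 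In particular it is an isomorphism of Banach algebras, which is the assertion.

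The main obstacle I anticipate is the use of maximality in the forward direction: one must carefully show that $g$ vanishing at all but countably many $y_A$ actually forces $g \in C_0(K_\A)$, and this genuinely needs maximality (it is false for non-maximal $\A$, since then there can be an infinite $C \subseteq \N$ almost disjoint from every element of $\A$, on which $g$ can be the constant function $1$ while still representing an element of $C_b(K_\A)$ with $M_g$ of separable range yet $g \notin C_0(K_\A)$). The correct route is: a bounded continuous $g$ on $K_\A$ is determined by its values on $\{x_n\}$, which form an arbitrary bounded sequence only when $\A$ fails to be maximal; when $\A$ is maximal, $K_\A$ is pseudocompact, so every $g \in C_b(K_\A)$ already attains its behaviour at infinity through the $y_A$'s, and $\{g \ne 0\}$ being ``small'' in the sense above pins $g$ down to $C_0(K_\A)$. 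Making this precise — identifying exactly which standard fact about pseudocompactness of $K_\A$ for maximal $\A$ to cite — is where the care is needed; everything else is routine.
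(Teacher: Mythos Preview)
Your reduction via Lemma~\ref{separability-lemma} is correct, as is the easy direction and the derivation of the corona-algebra isomorphism. The gap is in the nontrivial direction. You assume $s(g)=\{A\in\A:g(y_A)\ne0\}$ is countable, take an infinite $C=\{n:|g(x_n)|\ge\varepsilon\}$, and then write that ``we can keep extracting, and maximality lets us find infinitely many distinct such $A\in\A$, contradicting countability of $\{A:g(y_A)\ne0\}$.'' But producing \emph{infinitely many} elements of $s(g)$ does not contradict $s(g)$ being \emph{countable}; a recursive extraction argument of the kind you sketch yields at most a countable sequence of $A$'s and never a contradiction. The appeal to pseudocompactness at the end is a red herring: pseudocompactness only says that continuous real-valued functions on~$K_\A$ are bounded, which you have already assumed.

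What is actually needed is to show that the set $\A'=\{A\in\A:A\cap C\ \text{is infinite}\}$ is \emph{uncountable}. The paper does this (in contrapositive form) by observing that maximality of~$\A$ forces $\{A\cap C:A\in\A'\}$ to be a maximal almost disjoint family on the infinite set~$C$; one then invokes the standard fact that no infinite set carries a countably infinite maximal almost disjoint family, and separately rules out $\A'$ being finite (since then the level set $\{x:|g(x)|\ge\varepsilon\}$ would be compact). This combinatorial fact about MAD families is the missing idea in your argument; once you insert it, the rest of your write-up goes through.
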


\begin{proof}
Lemma~\ref{separability-lemma} implies that  the operator~$M_g$ has separable range if and only if the set $s(g)=\{A\in \A: 
g(y_A)\not=0\}$ is countable, so we seek to prove that the latter statement is equivalent to $g\in C_0(K_\A)$. One implication is clear, namely that $s(g)$ is countable for every $g\in C_0(K_\A)$.
  
Conversely, suppose that $g\in C_b(K_\A)\setminus C_0(K_\A)$. Then
$L=\{x\in K_\A: |g(x)|\geq\varepsilon\}$ is noncompact for some
$\varepsilon>0$. Define \[ B=\{n\in \N:
|g(x_n)|\geq\varepsilon/2\}\quad\text{and}\quad \A' = \{A\in\A :
A\cap B\in\sub\}. \] First, we observe that $B'= B\setminus\bigcup\A'$ is finite 
because otherwise $\A\cup\{B'\}$ would be an almost disjoint family in~$\sub$ strictly larger than~$\A$, contradicting the maximality of~$\A$. Second, $\A'$ cannot be finite because if it were, $L$ would be
compact. Hence $\B=\{A\cap B: A\in\A'\}$ is an infinite family of
infinite, almost disjoint subsets of~$B$. The maximality of~$\A$
implies that~$\B$ is maximal as well, so~$\B$ must be uncountable because
no countably infinite set contains a countably infinite maximal almost disjoint family. Hence~$\A'$ is also uncountable. The definition of~$B$ implies that 
$g(y_A)\ne0$ for every $A\in \A'$, so $\A'\subseteq s(g)$, proving that~$s(g)$ is uncountable.
\end{proof}

\begin{remark}\label{nonmad}
  The conclusion of Lemma~\ref{mad} fails if the almost disjoint family \mbox{$\A\subseteq [\N]^\omega$} is not maximal. Indeed, in that case  we can choose $B\in[\N]^{\omega}$ such that $A\cap B\in[\N]^{<\omega}$ for every $A\in\A$. Then $F = \{ x_n : n\in B\}$ is a non-compact, clopen subset of~$K_\A$, so $1_F\in C_b(K_\A)\setminus C_0(K_\A)$, but the corresponding multiplication operator~$M_{1_F}$ has separable range.    
\end{remark}  

It is well known that the corona algebra is isometrically isomorphic to
$C(K_\A^*)$, where $K_\A^*=\beta K_\A\setminus K_\A$ is the \v Cech--Stone remainder of $K_\A$.
In the case of a maximal almost disjoint family~$\A\subseteq\sub$, Lemma~\ref{mad} implies that we can define an injective algebra homomorphism
$\psi\colon C_b(K_\A)/C_0(K_\A)\to\mathscr{B}(C_0(K_\A))/\mathscr{X}(C_0(K_\A))$  of norm~$1$ by 
\[\psi(g+C_0(K_\A))= M_g+\mathscr{X}(C_0(K_\A)) \]
for every $C_b(K_\A)$. It can be shown that~$\psi$ is isometric, and hence  $C(K_\A^*)$ is isometrically isomorphic to a commutative subalgebra of  $\BB(C_0(K_\A))/\mathscr{X}(C_0(K_\A))$.

The \v Cech--Stone remainder of~$K_\A$ for a maximal almost disjoint
family~$\A\subseteq\sub$ has been well investigated in the literature.
For example, Kulesza and Levy~\cite{kulesza} have observed that the
methods of~\cite{partitioners} imply that under {\sf CH}, any
separable compact Hausdorff space is homeomorphic to the \v
Cech--Stone remainder of~$K_\A$ for some maximal almost disjoint
family $\A\subseteq [\N]^\omega$.  On the other hand,
Dow~\cite{dow-remainders} has shown that it is consistent that the \v
Cech--Stone remainder of~$K_\A$ has cardinality at most~$\mathfrak c$ for
every maximal almost disjoint family $\A\subseteq\sub$.  In particular
it is undecidable whether~$\beta\N$ is homeomorphic to the \v
Cech--Stone remainder of~$K_\A$ for any maximal almost disjoint family
$\A\subseteq\sub$.

These results suggest that the class of Banach algebras of the form \[
\BB(C_0(K_\A))/\mathscr{X}(C_0(K_\A)) \] may be sensitive to
additional set-theoretic assumptions. To establish that this is indeed the
case would require noncommutative counterparts of the above
consistency results.  The arguments would certainly need to involve
the structure of continuous self-maps of~$K_\A$ and its
compactifications as in Proposition~\ref{maps}, not only scalar-valued
functions on these spaces.  This represents a considerable challenge.
However, the ideas of~\cite{mrowka} to obtain a one-dimensional corona
algebra were successfully adapted to work within \textsf{ZFC} in a stronger, noncommutative
$C^*$-alge\-braic context in~\cite{ext}.

The following question arises  naturally  from the above body of results, including Lemma~\ref{mad}:

\begin{question}  Is there within \textsf{ZFC} a \emph{maximal} almost disjoint family $\A\subseteq[\N]^\omega$
such that $\BB(C_0(K_\A))/\mathscr{X}(C_0(K_\A))$ is one-dimensional?
\end{question}

Mr\'owka's original families of \cite{mrowkas} are maximal, and the
consistently existing families of~\cite{mrowka} can also be
constructed maximal. By contrast, our construction does not give maximality. Indeed, if~$\A\subseteq\sub$ is the family of \cite{mrowkas},
the \v Cech--Stone remainder of~$K_\A$ is a singleton, so the
corona algebra of $C_0(K_\A)$ is one-dimensional, whereas  the
family~$\A$ from Theorem~\ref{main-theorem} fails this property because, by the nonmaximality of~$\A$, some nonzero elements
of the corona algebra correspond to operators with separable
range, as explained in Remark~\ref{nonmad}.\smallskip

A set $B\in \sub$ is called a \emph{partitioner} of an almost disjoint
family~$\A\subseteq \sub$ if, for every $A\in \A$, either 
$A\setminus B$ or
$A\cap B$ 
is finite. A partitioner~$B$  partitions the family~$\A$
into two parts, namely $\A_B=\{ A\in
\A : A\setminus B\in\fin\}$ and its complement  \mbox{$\{ A\in \A : A\cap B\in\fin\}$}. This in turn partitions the space~$K_\A$ into two parts: 
\[ F = \{x_n : n\in B\}\cup\{ y_A :
A\in\A_B\}\quad \text{and}\quad \{x_n : n\in\N\setminus B\}\cup\{ y_A :
A\in\A\setminus\A_B\}, \]
both of which are closed. Hence
the indicator function $1_{F}$ is continuous, so it induces a multiplication operator
\mbox{$M_{1_{F}}\in\BB(C_0(K_\A))$}, which is clearly a projection.  Therefore it decomposes~$C_0(K_\A)$ into the direct sum of two closed subspaces. Lemma~\ref{separability-lemma} shows that
these two subspaces are  nonseparable if and only if both~$\A_B$ and its complement are uncountable.

Partitioners are well studied in the literture (see,
e.g.,~\cite{partitioners}), and almost disjoint families with
``few partitioners'' were already  constructed in the 1940s by
Luzin~\cite{luzin}, who found an almost disjoint family
$\A\subseteq\sub$ which does not admit any partitioner~$B$ such that
both~$\A_B$ and its complement are uncountable.  Note that our almost
disjoint family~$\A$ from Theorem~\ref{main-theorem} shares this
property because~$C_0(K_\A)$ cannot be decomposed into the direct sum
of two nonseparable, closed subspaces. A well-known property of
Mr\'owka's family~$\A$ is that it does not admit any
partitioner~$B$ such that both~$\A_B$ and its complement are
infinite.  In fact, Lemma~\ref{mad} implies that no maximal almost disjoint family~$\A\subseteq\sub$
admits a partitioner~$B$ such that either~$\A_B$ or its complement is
countably infinite.\smallskip

Other results which may help answer
Question~\ref{main-question} address the isomorphic classification
and the structure of complemented subspaces of~$C_0(K_\A)$ for an uncountable, almost disjoint
family $\A\subseteq\sub$. Marciszewski and Pol \cite{mpol} have shown that there
are $2^{\mathfrak c}$ nonisomorphic Banach spaces of this form. Further,
assuming Martin's axiom and the negation of {\sf CH}, we have 
the following two results concerning an almost disjoint family $\A\subseteq\sub$  of uncountable cardinality strictly smaller than~$\mathfrak c$:
\begin{itemize}
\item The Banach space $C_0(K_\A)$ has ``many'' decompositions as a direct sum of
two nonseparable, closed subspaces (see~\cite{mrowka}).
\item   The Banach spaces $C_0(K_\A)$
  and $C_0(K_\B)$ are isomorphic whenever $\B\subseteq\sub$ is an almost disjoint family of the same
  cardinality as~$\A$. In particular, the Banach space~$C_0(K_\A)$ is
  isomorphic to its square (see the recent paper~\cite{sailing} of Cabello S\'anchez, Castillo, Marciszewski, Plebanek and
  Salguero-Alar\-c\'on).
\end{itemize}

\bibliographystyle{amsplain}

\end{document}